\documentclass{article}
\usepackage{amsmath}
\usepackage{amssymb}
\usepackage{amsthm}
\usepackage{bbm}
\usepackage{amscd}
\usepackage{amssymb}
\usepackage{amsthm}
\usepackage{graphicx}
\usepackage{color}
\usepackage[all]{xy}
\usepackage{mathrsfs}
\usepackage{xcolor} 
\definecolor{brightmaroon}{rgb}{0.76, 0.13, 0.28}
\usepackage[
  breaklinks,
  unicode,
  colorlinks=true,
  linkcolor=black,        
  citecolor=brightmaroon, 
  urlcolor=black          
]{hyperref}
\usepackage{mathtools}
\usepackage{yhmath}
\usepackage{tikz-cd}
\usepackage[
	backend=biber, 
	citestyle=numeric-comp,
	giveninits=true,
	date=year,
	sorting=nyt, 
	maxnames=7,
    isbn=false,
	bibstyle=joycebib
]{biblatex}
\theoremstyle{plain}
\newtheorem{thm}{Theorem}[section]
\newtheorem{lem}[thm]{Lemma}
\newtheorem{prop}[thm]{Proposition}
\newtheorem{cor}[thm]{Corollary}

\theoremstyle{definition}
\newtheorem{dfn}[thm]{Definition}

\newtheorem{ex}[thm]{Example}
\newtheorem{rem}[thm]{Remark}
\numberwithin{equation}{section}
\numberwithin{figure}{section}
\def\e#1\e{\begin{equation}#1\end{equation}}
\def\ea#1\ea{\begin{align}#1\end{align}}
\def\eq#1{{\rm(\ref{#1})}}
\def\dim{\mathop{\rm dim}\nolimits}
\def\Man{{\mathop{\bf Man}}}
\def\Manb{{\mathop{\bf Man^b}}}
\def\Manc{{\mathop{\bf Man^c}}}
\def\Mangc{{\mathop{\bf Man^{gc}}}}
\def\Mancin{{\mathop{\bf Man^c_{in}}}}  
\def\Mangcin{{\mathop{\bf Man^{gc}_{in}}}}
\def\codim{\mathop{\rm codim}\nolimits}
\newcommand{\gp}{\mathrm{gp}}
\newcommand{\sk}{\llcorner}
\def\KN{{\rm KN}}
\def\depth{\mathop{\rm depth}\nolimits}
\def\oM{{\mathbin{\smash{\,\,\overline{\!\!\mathcal M\!}\,}}}}
\def\Im{\mathop{\rm Im}\nolimits}

\def\Ker{\mathop{\rm Ker}}

\def\Re{\mathop{\rm Re}}

\def\supp{\mathop{\rm supp}}
\def\rank{\mathop{\rm rank}\nolimits}
\def\Hom{\mathop{\rm Hom}\nolimits}
\def\id{{\mathop{\rm id}\nolimits}}

\def\rst{{\rm st}}

\def\ge{\geqslant}
\def\le{\leqslant\nobreak}
\def\R{{\mathbin{\mathbb R}}}
\def\Z{{\mathbin{\mathbb Z}}}

\def\N{{\mathbin{\mathbb N}}}
\def\C{{\mathbin{\mathbb C}}}
\def\cS{{\mathbin{\cal S}}}
\def\al{\alpha}
\def\be{\beta}

\def\de{\delta}

\def\la{\lambda}

\def\th{\theta}

\def\vp{\varphi}
\def\si{\sigma}
\def\om{\omega}

\def\La{\Lambda}
\def\Ga{\Gamma}
\def\pd{\partial}
\def\db{\bar\partial}
\def\ts{\textstyle}
\def\w{\wedge}
\def\sm{\setminus}
\def\bu{\bullet}
\def\op{\oplus}
\def\ot{\otimes}
\def\ov{\overline}
\def\bigop{\bigoplus}

\def\iy{\infty}
\def\es{\emptyset}
\def\ra{\rightarrow}
\def\ab{\allowbreak}
\def\longra{\longrightarrow}
\def\hookra{\hookrightarrow}
\def\ha{{\ts\frac{1}{2}}}
\def\t{\times}
\def\ci{\circ}
\def\ti{\tilde}

\def\d{{\rm d}}
\def\an#1{\langle #1 \rangle}
\def\ban#1{\bigl\langle #1 \bigr\rangle}
\begin{document}

\title{B-complex manifolds with generalized corners. I. Newlander--Nirenberg Theorems}
\author{H\"ulya Arg\"uz and Dominic Joyce}
\date{}
\maketitle
\begin{abstract} 
We generalize complex manifolds to manifolds with corners $X$, and to manifolds with generalized corners (g-corners) in the sense of the second author \cite{Joyc2}, using complex structures on the b-tangent bundle (log tangent bundle) ${}^bTX$. We prove a formal Newlander--Nirenberg type theorem  showing that along each corner stratum of $X$, the b-complex structure agrees with a standard model to infinite order.

In the sequel \cite{ArJo2} we show that if $S$ is a log smooth log $\C$-scheme, or log smooth log complex analytic space, then the Kato--Nakayama space $S^\KN$ has the structure of a b-complex manifold with g-corners. Using our Newlander--Nirenberg theorem we give necessary and sufficient conditions for a b-complex manifold with g-corners to be a Kato--Nakayama space.
\end{abstract}

\setcounter{tocdepth}{2}
\tableofcontents

\section{Introduction}
\label{bc1}

\subsection{Overview}

Manifolds with corners, locally modelled on $[0,\iy)^k\t\R^{m-k}$, provide a natural extension of smooth manifolds that accommodates boundary and corner strata. They are particularly useful in situations where non-compact manifolds admit compactifications by adding boundary faces with controlled geometric structure, allowing one to study asymptotic behaviour within a differential-geometric framework. Manifolds with generalized corners (g-corners), introduced by the second author \cite{Joyc2}, further extend this notion by allowing more flexible local models governed by `weakly toric monoids'. 

The aim of this paper is to extend the theory of complex manifolds to the setting of manifolds with corners and, more generally, manifolds with g-corners. Classically, complex manifolds may be defined either via holomorphic coordinate charts or via integrable almost complex structures, with the equivalence of these approaches ensured by the Newlander–Nirenberg theorem. 

In this work, we develop a theory of b-complex manifolds with g-corners following the latter perspective, based on b-almost complex structures and their integrability. Our main result establishes a formal analogue of the Newlander–Nirenberg theorem in this context. 

A sequel \cite{ArJo2} applies these ideas to log geometry: we show that if $S$ is a log smooth log $\C$-scheme or log complex analytic space then the `Kato--Nakayama space' $S^\KN$ is a b-complex manifold with g-corners, and characterizes the b-complex manifolds which arise this way.

\subsection{Background and context}

We recall some well known material, see e.g.\ Kobayashi and Nomizu \cite[\S IX]{KoNo}.

\begin{dfn}
\label{bc1def1}
Let $X$ be a smooth manifold of real dimension $2n$, and $TX\ra X$ the tangent bundle of $X$. An {\it almost complex structure\/} $J$ is a morphism of vector bundles $J:TX\ra TX$ such that $J^2=-\id_{TX}$. We call $(X,J)$ an {\it almost complex manifold}.

A smooth function $f:X\ra\C$ is called {\it holomorphic\/} if $J(\d f)=i\,\d f$ in $\Ga^\iy(T^*X\ot_\R\C),$ where $\Ga^\iy(E)$ denotes the vector space of smooth sections of~$E$.

An almost complex structure has a {\it Nijenhuis tensor\/} $N_J\in\Ga^\iy(T^*X\ot T^*X\ot TX)$ characterized by the fact that for all vector fields $v,w\in\Ga^\iy(TX)$, we have 
\e
N_J(v,w)=[v,w]+J\bigl([Jv,w]+[v,Jw]\bigr)-[Jv, Jw],
\label{bc1eq1}
\e
where $[\,,\,]$ is the Lie bracket of vector fields. We call $J$ {\it integrable\/} if $N_J=0$, and then we call $(X,J)$ a {\it complex manifold}.

Equivalently, the complexification $TX\ot_\R\C$ has a splitting $TX\ot_\R\C=T^{1,0}X\op T^{0,1}X$, where $T^{1,0}X$ is the eigenspace of $J$ with eigenvalue $i$, and $T^{0,1}X$ the eigenspace of $J$ with eigenvalue $-i$. Then $J$ is integrable if and only if~$\bigl[\Ga^\iy(T^{1,0}X),\Ga^\iy(T^{1,0}X)\bigr]\subseteq \Ga^\iy(T^{1,0}X)\subseteq\Ga^\iy(TX\ot_\R\C)$.
\end{dfn}

Here is the {\it Newlander--Nirenberg Theorem\/} \cite{NeNi}, a central result.

\begin{thm}
\label{bc1thm1}
Let\/ $(X,J)$ be an almost complex manifold of real dimension $2n$. Then $J$ is integrable if and only if near each\/ $x\in X$ there exist local complex coordinates $(z_1,\ldots,z_n)$ defined on an open neighbourhood\/ $U$ of\/ $x$ in $X,$ such that\/ $z_j:U\ra\C$ is holomorphic for $j=1,\ldots,n$. In these coordinates, $J$ is locally standard. Writing $z_j=x_j+iy_j$, we have $TX\vert_U=\an{\frac{\pd}{\pd x_j},\frac{\pd}{\pd y_j}:1\le j\le n}_\R,$~with
\begin{equation*}
J\Bigl(\frac{\pd}{\pd x_j}\Bigr)=\frac{\pd}{\pd y_j},\qquad J\Bigl(\frac{\pd}{\pd y_j}\Bigr)=-\frac{\pd}{\pd x_j},\qquad j=1,\ldots,n.
\end{equation*}
\end{thm}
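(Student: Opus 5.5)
The easy direction comes first. Suppose near each $x$ there are holomorphic coordinates $(z_1,\ldots,z_n)$ on $U$. The condition $J(\d z_j)=i\,\d z_j$ says that the $(1,0)$-forms are spanned by $\d z_1,\ldots,\d z_n$, so $T^{0,1}X\vert_U=\an{\pd/\pd\bar z_j}$. This forces $J$ to be the standard structure in the statement. Coordinate vector fields commute, so $N_J(\pd/\pd x_j,\pd/\pd y_k)=0$ and similarly for the other pairs of coordinate fields. Since $N_J$ is a tensor, $N_J=0$ on $U$, so $J$ is integrable.

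For the converse I will follow the standard analytic route; Theorem \ref{bc1thm1} is quoted from \cite{NeNi}, so the plan is classical. Fix $x\in X$ and choose real coordinates with $J_x$ standard at $x$. After a linear change and a rescaling by $\epsilon$, $T^{0,1}X$ near $x$ is spanned by
\[
\bar L_j=\frac{\pd}{\pd\bar z_j}+\sum_k a_{jk}(z)\frac{\pd}{\pd z_k},
\]
with $a=(a_{jk})$ small in $C^{k,\al}$ on a ball $B$. The goal is to find $w=z+h(z)$, with $h$ small in $C^{k+1,\al}$, satisfying $\bar L_j w=0$. The map $w$ is then a local diffeomorphism, and $J(\d w_k)=i\,\d w_k$ holds because $\d w_k$ annihilates $T^{0,1}X$. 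The equation reads
\[
\db h+a\cdot(I+\pd h)=0,
\]
a nonlinear perturbation of $\db$.

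To solve it I would follow Malgrange's approach. Integrability $N_J=0$ is equivalent to $[\bar L_j,\bar L_k]\in T^{0,1}X$, and with the normalized frame above this gives $[\bar L_j,\bar L_k]=0$. That is a compatibility condition of the form $\db a+[a,\pd a]=0$. I would seek $h$ of the form $h=\db^*\phi$, where $\phi$ solves an elliptic second order quasilinear system
\[
\Delta\phi+Q(a,\pd\db^*\phi)=F(a),
\]
on the ball with Dirichlet data. This system can be solved by the contraction mapping principle in H\"older spaces using Schauder estimates, since $a$ is small. Ellipticity together with the compatibility condition then shows that the auxiliary quantity $\db h+a(I+\pd h)$ itself satisfies a homogeneous elliptic system with zero boundary data, and hence vanishes. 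For real-analytic $J$ one could instead use Frobenius in the complexification, but the smooth case needs the PDE.

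The main obstacle is this last step: showing that the solution of the auxiliary elliptic system actually solves the overdetermined $\db$-type system. This is exactly where $N_J=0$ enters, and it needs careful bookkeeping of the quadratic terms. I would first try the Nirenberg--H\"ormander alternative, which solves $\db$ on $(0,1)$-forms on a polydisc with estimates and then iterates a Nash--Moser or KAM-style scheme. Its key input is the identity that the error at each step is $\db$-closed up to quadratic terms, by integrability. Smoothness of $w$ then follows by elliptic regularity, and this completes the proof.
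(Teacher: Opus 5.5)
The paper does not prove this statement. It is the classical theorem, quoted from \cite{NeNi} as background. The paper's own main result, Theorem \ref{bc3thm3}, is only a \emph{formal} analogue, proved order by order with a ${}^b\db_{S^k}$-Poincar\'e lemma and Borel's theorem, precisely so as to avoid the analytic difficulties. So your argument has to stand on its own.

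Several parts are fine: your easy direction, the normalization $\bar L_j=\pd/\pd\bar z_j+\sum_k a_{jk}\pd/\pd z_k$, the equation $\db h+a(I+\pd h)=0$, and the deduction $[\bar L_j,\bar L_k]=0$. The converse breaks exactly where you locate the main obstacle. You solve for $\phi$ with Dirichlet data and assert that $E=\db h+a(I+\pd h)$, with $h=\db^*\phi$, satisfies a homogeneous elliptic system \emph{with zero boundary data}. Nothing gives $E\vert_{\pd B}=0$. $E$ involves second derivatives of $\phi$, including normal ones, and $\phi\vert_{\pd B}=0$ does not control them.

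This fails already in the linearized flat model $\db h=f$ with $\db f=0$. On the unit ball in $\C^2$ take $f=\db(|z_1|^2)=z_1\,\d\bar z_1$. The operator $\db\db^*+\db^*\db$ acts componentwise as a multiple of the Euclidean Laplacian. So the Dirichlet solution of $(\db\db^*+\db^*\db)\phi=f$ is a nonzero multiple of $(|z|^2-1)z_1\,\d\bar z_1$. Then $\db\phi$ is a nonzero multiple of $z_1z_2\,\d\bar z_1\w\d\bar z_2$, and $\db h-f=-\db^*\db\phi$ is a nonzero multiple of $z_1\,\d\bar z_1-z_2\,\d\bar z_2$. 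This error is $\db$- and $\db^*$-closed, as your compatibility argument predicts, but it does not vanish on $\pd B$, so $h$ is not a solution. Compatibility plus ellipticity only make the error harmonic. To kill it through boundary conditions you would need the $\db$-Neumann conditions. Those are not coercive, and they cause the loss of derivatives that makes the nonlinear problem hard.

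What is missing is a mechanism that produces an \emph{exact} solution of the overdetermined system $\bar L_jw=0$. There are two standard ones.
\begin{itemize}
\item Malgrange's actual argument does not try to show such an error vanishes. It uses a determined, hence harmless, nonlinear elliptic Dirichlet problem only to find new coordinates in which the coefficients of $J$ satisfy a gauge-type condition in addition to integrability. Together these form an elliptic system with analytic nonlinearity, so the coefficients are real-analytic. One then concludes with the real-analytic case via complexification and the holomorphic Frobenius theorem, which is the very case you set aside.
\item Webster's proof runs a KAM-type iteration built on the Henkin--Ram\'irez homotopy formula $f=\db T_1f+T_2\db f$ on strictly convex domains. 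This solves $\db$ on closed forms with estimates and no boundary conditions. The quadratic error coming from integrability is absorbed by rapid convergence on shrinking domains.
\end{itemize}
Your fallback gestures at the second route. Without the solution operator and the estimates controlling loss of derivatives, it describes a strategy rather than giving a proof.
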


This paper will generalize these ideas from ordinary manifolds $X$, locally modelled on $\R^m$, to {\it manifolds with corners}, locally modelled on $[0,\iy)^k\t\R^{m-k}$, and to {\it manifolds with generalized corners\/} (or {\it manifolds with g-corners\/}) from the second author \cite{Joyc2}, which have a larger set of local models $X_P$ depending on a `weakly toric monoid' $P$, described in~\S\ref{bc22}. 

As in \S\ref{bc21}, a manifold with corners $X$ has two notions of tangent bundle, the ordinary tangent bundle $TX$, and the {\it b-tangent bundle\/} ${}^bTX$, introduced by Melrose \cite{Melr2,Melr3}. If $(x_1,\ldots,x_m)\in[0,\iy)^k\t\R^{m-k}$ are local coordinates on open $U\subset X$, so that $x_1,\ldots,x_k$ take values in $[0,\iy)$, then 
\begin{equation*}
TX\vert_U\!\cong\!\Bigl\langle \frac{\pd}{\pd x_1},\ldots,\frac{\pd}{\pd x_m}\Bigr\rangle_\R,\;\> 
{}^bTX\vert_U\!\cong\!\Bigl\langle x_1\frac{\pd}{\pd x_1},\ldots,x_k\frac{\pd}{\pd x_k},\frac{\pd}{\pd x_{k+1}},\ldots,\frac{\pd}{\pd x_m}\Bigr\rangle_\R.
\end{equation*}

Manifolds with g-corners $X$ should probably be considered singular from the point of view of ordinary differential geometry --- for example, the ordinary tangent bundle $TX$ may not exist as a vector bundle --- but they have well behaved b-tangent bundles ${}^bTX$. Here is part of Definition \ref{bc3def1} below.

\begin{dfn}
\label{bc1def2}
Let $X$ be a manifold with corners or g-corners, of real dimension $2n$. Then $X$ has a {\it depth stratification\/} $X=\coprod_{k=0}^{2n}S^k(X)$, where $S^k(X)$ is a manifold without boundary of dimension $2n-k$, and is roughly the points in $X$ which lie in the intersection of $k$ boundary hypersurfaces. There is a natural exact sequence of vector bundles on $S^k(X)$
\begin{equation*}
\xymatrix@C=20pt{ 0 \ar[r] & {}^bN_{S^k(X)} \ar[rr]^{{}^bi} && {}^bTX\vert_{S^k(X)}
\ar[rr]^{{}^b\pi} && T(S^k(X)) \ar[r] & 0, }
\end{equation*}
where ${}^bN_{S^k(X)}$ is the {\it b-normal bundle\/} of $S^k(X)$ in $X$, a rank $k$ vector bundle which has a natural flat connection~$\nabla^{{}^bN_{S^k(X)}}$.

A {\it b-almost complex structure\/} $J$ on $X$ is a morphism of vector bundles $J:{}^bTX\ra{}^bTX$ such that $J^2=-\id_{TX}$, and for all $0\le k\le 2n$ we have 
\begin{equation*}
{}^bi\bigl({}^bN_{S^k(X)}\bigr)\cap J\bigl[{}^bi\bigl({}^bN_{S^k(X)}\bigr)\bigr]=0
\end{equation*}
in subbundles of ${}^bTX\vert_{S^k(X)}$. This is only possible if $S^k(X)=\es$ when $n<k\le 2n$. It holds automatically if $k=0$ or~1.

As in Definition \ref{bc1def1}, a b-almost complex structure has a {\it Nijenhuis tensor\/} $N_J\in\Ga^\iy({}^bT^*X\ot{}^bT^*X\ot{}^bTX)$ such that \eq{bc1eq1} holds for all b-vector fields $v,w$ on $X$. We call $J$ {\it integrable\/} if $N_J=0$, and then we call $(X,J)$ a {\it b-complex manifold with (g-)corners}.
\end{dfn}

\subsection{Main results}

This paper will prove Theorems \ref{bc3thm2} and \ref{bc3thm3} below, to which readers are referred for detailed statements. Theorem \ref{bc3thm2} shows that if $(X,J)$ is b-complex manifold with (g-)corners then on each stratum $S^k(X)$ we can find local coordinates $(\th_1,\ldots,\th_k,z_{k+1},\ldots,z_n)$ in $\R^k\t\C^{n-k}$ which put $J\vert_{S^k(X)}:{}^bTX\vert_{S^k(X)}\ra {}^bTX\vert_{S^k(X)}$, and the natural Lie algebroid structure on ${}^bTX\vert_{S^k(X)}$, into a standard form; essentially, $z_{k+1},\ldots,z_n$ are holomorphic functions on $S^k(X)$. 

Theorem \ref{bc3thm3} is a formal analogue of Theorem \ref{bc1thm1} for b-complex manifolds with (g-)corners. For simplicity, we state here a result equivalent to Theorem \ref{bc3thm3} for manifolds with ordinary corners rather than g-corners:

\begin{thm}
\label{bc1thm2}
Let\/ $(X,J)$ be a b-complex manifold with corners, of real dimension $2n,$ and let\/ $x\in S^k(X)$ for\/ $0\le k\le n$. Then we can choose an open neighbourhood\/ $U$ of\/ $x$ in $X$ and global coordinates $(\la_1,\ldots,\la_k,\th_1,\ldots,\th_k,x_{k+1},\ab\ldots,\ab x_n,\ab y_{k+1},\ab\ldots,y_n)\in[0,\iy)^k\t\R^{2n-k}$ on $U,$ where $\la_1,\ldots,\la_k$ take values in $[0,\iy),$ with\/ $S^k(X)$ locally defined by $\la_1=\cdots=\la_k=0,$ and\/ $\la_1(x)=\cdots=\la_k(x)=0,$ such that defining the `standard' b-complex structure $J_\rst$ on $U$ by
\begin{equation*}
J_\rst\Bigl(\la_j\frac{\pd}{\pd\la_j}\Bigr)\!=\!\frac{\pd}{\pd\th_j},\;\>  J_\rst\Bigl(\frac{\pd}{\pd\th_j}\Bigr)\!=\!-\la_j\frac{\pd}{\pd\la_j},\;\>
J_\rst\Bigl(\frac{\pd}{\pd x_j}\Bigr)\!=\!\frac{\pd}{\pd y_j}, \;\>
J_\rst\Bigl(\frac{\pd}{\pd y_j}\Bigr)\!=\!-\frac{\pd}{\pd x_j}, 
\end{equation*}
then $J$ and\/ $J_\rst$ are equal on the formal completion of\/ $U$ along $S^k(X)\cap U$. That is, $J-J_\rst$ vanishes along $S^k(X)\cap U$ to infinite order in\/~$\la_1,\ldots,\la_k$.
\end{thm}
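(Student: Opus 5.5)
Work in local corner coordinates near $x$ and build the required coordinates order by order in the boundary defining functions, in the spirit of a formal Newlander--Nirenberg argument. Choose initial coordinates $(u_1,\ldots,u_k,w_1,\ldots,w_{2n-k})\in[0,\iy)^k\t\R^{2n-k}$ on a neighbourhood $U$ of $x$, with $S^k(X)\cap U=\{u_1=\cdots=u_k=0\}$. The b-normal bundle ${}^bN_{S^k(X)}$ is spanned by $u_j\frac{\pd}{\pd u_j}$ restricted to $S^k(X)$, and these sections are flat for $\nabla^{{}^bN_{S^k(X)}}$.

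\textbf{Zeroth order.} By the condition ${}^bi({}^bN)\cap J[{}^bi({}^bN)]=0$, the vectors $J(u_j\pd_{u_j})\vert_{S^k}$ project under ${}^b\pi$ to $k$ linearly independent vector fields $v_1,\ldots,v_k$ on $S^k(X)$. The leading-order integrability condition (Theorem \ref{bc3thm2}, which is assumed) puts $J\vert_{S^k}$ and the Lie algebroid structure on ${}^bTX\vert_{S^k}$ into standard form. Concretely, we may choose coordinates $(\th_1,\ldots,\th_k,z_{k+1},\ldots,z_n)$ on $S^k(X)$ with $v_j=\pd/\pd\th_j$ and with $z_l$ holomorphic in the induced sense. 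The $v_j$ commute because $[u_i\pd_{u_i},u_j\pd_{u_j}]=0$ and $N_J=0$.

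We then extend $\th_j,x_l,y_l$ arbitrarily off $S^k$ and set $\la_j=u_j$. The principal-part computation shows that the b-vector field $J_\rst(\la_j\pd_{\la_j})-J(\la_j\pd_{\la_j})$ is then a section of ${}^bTX$ whose restriction to $S^k$ lies in ${}^bN$. This is corrected by rescaling $\la_j\mapsto e^{f_j}\la_j$ with $f_j$ smooth; the correction is possible since it is given by data on $S^k$ that is flat, hence locally exact. After this step, $J-J_\rst=O(\la)$, meaning it lies in the ideal $(\la_1,\ldots,\la_k)$ of coefficient functions.

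\textbf{Inductive step.} Suppose $J-J_\rst=A$ with $A=O(\la^\al)$ for all multi-indices $|\al|=m$; that is, $A=\sum_{|\al|=m}\la^\al A_\al+O(\la^{m+1})$. We seek a coordinate change of the form
\[
\Phi=\id+\sum_{|\al|=m}\la^\al\xi_\al,
\]
where $\xi_\al$ is a b-vector field on $S^k$ (acting on $\la_j$ multiplicatively as $\la_j\mapsto\la_j(1+\la^\al g_{j\al})$). This kills the order-$m$ term, so the new $A$ is $O(\la^{m+1})$.

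Linearizing, the order-$m$ term changes by $\mathcal L_{\la^\al\xi_\al}J_\rst$, whose leading part is $\la^\al\bigl(\bar\pd$-type operator applied to $\xi_\al\bigr)$. Here $\la^\al$ behaves like $|w|^\al$ where $w_j=\log\la_j+i\th_j$, so the relevant operator is $\bar\pd$ twisted by the weight $\al$: the anti-holomorphic derivative in $w_j$ of $\la^\al$ equals $\frac{\al_j}{2}\la^\al$, which is nonzero.

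Meanwhile $J^2=-1$ gives $A_\al J_\rst+J_\rst A_\al=0$, so $A_\al$ is a $(0,1)$-form valued in $T^{1,0}$. The condition $N_J=0$ at order $m$ says $\bar\pd_\al A_\al=0$, where $\bar\pd_\al$ is this twisted Dolbeault operator on $\R^k\t\C^{n-k}$ with $\th$-directions. We therefore need to solve $\bar\pd_\al\xi_\al=A_\al$ locally. This is a local Dolbeault lemma for a twisted operator in which the $\la_j\pd_{\la_j}$ directions contribute the constant $\al_j/2$.

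\textbf{Main obstacle.} I expect the hard part to be this local solvability of $\bar\pd_\al\xi=A$ given $\bar\pd_\al A=0$, with smooth dependence. Multiplication by $\la^{-\al}$ formally conjugates $\bar\pd_\al$ to the ordinary $\bar\pd$ in the variables $(w_1,\ldots,w_k,z_{k+1},\ldots,z_n)$. My approach is to use this conjugation and apply the Dolbeault--Grothendieck lemma on a polydisc in $\C^n$. Since no $\la$-dependence of $A_\al$ remains at leading order, the data are $\log\la$-independent, and we can solve on the product of strips.

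Having produced $\Phi_m$ for each $m$, Borel's lemma assembles the infinite composition into a single smooth coordinate change on a possibly smaller $U$. The resulting coordinates $(\la,\th,x,y)$ make $J-J_\rst$ vanish to infinite order in $\la_1,\ldots,\la_k$ along $S^k(X)\cap U$.
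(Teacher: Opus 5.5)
Your route works once one step is repaired, and it differs from the paper's.

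\textbf{How the two routes compare.} You gauge-fix the tensor $J$ itself by b-diffeomorphisms $\Phi_m=\mathrm{id}+O(\lambda^m)$. The order-$m$ error $A_\alpha$ is a cocycle for the linearised Nijenhuis operator, you kill it with a $T^{1,0}$-valued primitive $\xi_\alpha$, and you then compose and Borel-sum. The paper never moves $J$. It corrects the candidate holomorphic functions $\log(\mu^\star_q e^{i\theta^\star_q})$ and $z^\star_j$ by scalar series $\sum_q g^q\mu^\star_q e^{i\theta^\star_q}$. At each order it solves a scalar equation ${}^b\bar\partial_{S^k}(g^q\vert_V)=(\text{closed }(0,1)\text{-form})$ on $V$, with closedness coming from ${}^b\bar\partial^2=0$ rather than $N_J=0$ (Proposition~\ref{bc3prop3}). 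It then Borel-sums the functions (Proposition~\ref{bc3prop1}, Corollary~\ref{bc3cor1}) and only at the end reads off the chart, which is a diffeomorphism because the corrections lie in $I$.

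In the commuting $J_{\rm st}$-holomorphic frame $\partial_{w_j},\partial_{z_l}$ your vector equation splits into $n$ scalar equations of exactly the paper's type, so the analytic core is the same. The paper's version avoids composing infinitely many diffeomorphisms, and via the filtration $Q_N$ and Lemma~\ref{bc3lem3} it covers g-corners. Yours is the natural deformation-theoretic formulation for ordinary corners.

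\textbf{Zeroth order.} Your rescaling $\lambda_j\mapsto e^{f_j}\lambda_j$ is needed in either route, but state it fully. You must compare the chart lifts of \emph{all} of $\partial_\theta,\partial_x,\partial_y$ with the lifts from Theorem~\ref{bc3thm2}, not only $J(\lambda_j\partial_{\lambda_j})$. The resulting ${}^bN$-valued $1$-form on $S^k(X)$ is closed precisely because of Theorem~\ref{bc3thm2}(ii). In the paper, this adaptation of $\phi^\star$ is what makes \eqref{bc3eq25} hold, though it is left implicit there.

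\textbf{The step that fails: your proof of the key lemma.} Conjugating by $\lambda^{\alpha}$ only restates the definition $\bar\partial_\alpha=\lambda^{-\alpha}\circ\bar\partial\circ\lambda^{\alpha}$. (Note also that $\lambda^\alpha=|e^{\alpha\cdot w}|$, not $|w|^\alpha$.) A primitive of $\lambda^\alpha A_\alpha$ obtained from the Dolbeault--Grothendieck lemma on a polydisc or product of strips in $(w,z)$ need not have the form $\lambda^\alpha\xi(\theta,z)$. That lemma does not guarantee that translation-invariant closed data on a translation-invariant domain has a translation-invariant primitive.

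The fix is to conjugate instead by the holomorphic monomial $e^{\alpha\cdot w}=\lambda^\alpha e^{i\alpha\cdot\theta}$. On $\log\lambda$-independent coefficients this gives $\bar\partial_\alpha=e^{i\alpha\cdot\theta}\circ\bar\partial_0\circ e^{-i\alpha\cdot\theta}$. Here the $d\bar w_j$-component of $\bar\partial_0 h$ is $\tfrac{i}{2}\partial h/\partial\theta_j$ and the $d\bar z_l$-components are those of $\bar\partial_z h$, so $\bar\partial_0$ is exactly the operator ${}^b\bar\partial_{S^k}$ of \eqref{bc3eq38}. This is why the paper expands in the basis $\mu^\star_qe^{i\theta^\star_q}$ rather than $\mu^\star_q$ in \eqref{bc3eq42}: no weight then appears.

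Degree-one exactness of $\bar\partial_0$ (Proposition~\ref{bc3prop2}) is then elementary:
\begin{itemize}
\item First remove the $d\bar w$-components using the Poincar\'e lemma in $\theta$, with $z$ as parameters.
\item What remains has only $d\bar z$-components; by closedness it is $\theta$-independent and $\bar\partial_z$-closed.
\item The Dolbeault--Grothendieck lemma in $z$ then finishes.
\end{itemize}

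\textbf{Domain.} Fix once and for all a domain of the form (ball in $\theta$)$\times$(polydisc in $z$) on which this exactness holds. Otherwise the infinitely many inductive steps could shrink $U$ to a point before you apply Borel's lemma.
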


For manifolds with boundary, Theorem \ref{bc1thm2} was proved by Mendoza~\cite{Mend2}. 

In the standard b-complex structure $J_\rst$ we have holomorphic functions on~$U$:
\begin{equation*}
(\la_1e^{i\th_1},\ldots,\la_ke^{i\th_k},x_{k+1}+iy_{k+1},\ldots,x_n+iy_n).
\end{equation*}
All other $J_\rst$-holomorphic functions on $U$ may be written locally as holomorphic functions of these. However, they are not complex coordinates on $U$, even locally, as they are degenerate: when $\la_j=0$ we cannot recover $\th_j$ from~$\la_je^{i\th_j}$.

The sequel \cite{ArJo2} will apply b-complex manifolds with g-corners in log geometry. Let $S$ be a log $\C$-scheme, or log complex analytic space. Kato and Nakayama \cite{KaNa1} explained how to functorially associate a topological space $S^\KN$ to $S$, the {\it Kato--Nakayama space}. This has important applications, e.g.\ in Mirror Symmetry. 

In \cite{ArJo2} we will show that if $S$ is log smooth and separated then $S^\KN$ can be given the structure of a b-complex manifold with g-corners, which is a b-complex manifold with corners if $S$ is also smooth. Conversely, if $X$ is a b-complex manifold with g-corners, we will give necessary and sufficient conditions for $X$ to be isomorphic to $S^\KN$, for $S$ a log smooth log complex analytic space that we can reconstruct from~$X$.
\medskip

\noindent{\it Acknowledgements.} The authors would like to thank Pierrick Bousseau for useful conversations. For the purpose of open access, the authors have applied a CC BY public copyright licence to any Author Accepted Manuscript (AAM) version arising from this submission. 

\section{Manifolds with (generalized) corners}
\label{bc2}

{\it Manifolds with corners\/} are analogues of smooth manifolds which are locally modelled on $\R^m_k=[0,\iy)^k\t\R^{m-k}$ for $0\le k\le m$. They include ordinary manifolds, locally modelled on $\R^m_0=\R^m$, and {\it manifolds with boundary}, locally modelled on $\R^m_1=[0,\iy)\t\R^{m-1}$. They were introduced by Cerf \cite{Cerf} and Douady \cite{Doua} in the early 1960s, and studied by many people including Melrose \cite{Melr2,Melr3}, \cite[\S 1]{KoMe} and the second author \cite{Joyc1,Joyc2}. There are several different notions of smooth map between manifolds with corners in the literature; our definition in \S\ref{bc21} follows Melrose \cite{Melr2,Melr3}, who calls them {\it b-maps}, and the second author \cite{Joyc2}. Manifolds with corners and smooth maps form a category~$\Manc$.

We will be interested in an extension introduced by the second author \cite{Joyc2}, called {\it manifolds with generalized corners}, or {\it manifolds with g-corners}, explained in \S\ref{bc22}. These were inspired by the `interior binomial varieties' of Kottke--Melrose \cite[\S 9]{KoMe}. They form a category $\Mangc$ containing $\Manc$ as a full subcategory. The local models for manifolds with g-corners are spaces $X_P$ depending on a {\it weakly toric monoid\/} $P$, where $X_P\cong\R^m_k$ when~$P\cong\N^k\t\Z^{m-k}$. 

We will discuss only topics that we need later. In particular, we will not define the boundary $\pd X$ or corners $C(X)=\coprod_{k=0}^{\dim X}C_k(X)$ of a manifold with corners $X$, though these are very important.

\subsection{Manifolds with corners}
\label{bc21}

See Melrose \cite{Melr2,Melr3}, \cite[\S 1]{KoMe} and the second author \cite{Joyc1}, \cite[\S 2]{Joyc2} for the material of this section.

\subsubsection{The definition of manifolds with corners}
\label{bc211}

\begin{dfn}
\label{bc2def1}
Use the notation $\R^m_k=[0,\iy)^k\t\R^{m-k}$ for $0\le k\le m$, and write points of $\R^m_k$ as $u=(x_1,\ldots,x_m)$ for $x_1,\ldots,x_k\in[0,\iy)$, $x_{k+1},\ldots,x_m\in\R$. Let $U\subseteq\R^m_k$ and $V\subseteq \R^n_l$ be open, and $f=(f_1,\ldots,f_n):U\ra V$ be a continuous map, so that $f_j=f_j(x_1,\ldots,x_m)$ maps $U\ra[0,\iy)$ for $j=1,\ldots,l$ and $U\ra\R$ for $j=l+1,\ldots,n$. Then we say:
\begin{itemize}
\setlength{\itemsep}{0pt}
\setlength{\parsep}{0pt}
\item[(a)] $f$ is {\it weakly smooth\/} if all derivatives $\frac{\pd^{a_1+\cdots+a_m}}{\pd x_1^{a_1}\cdots\pd x_m^{a_m}}f_j(x_1,\ldots,x_m):U\ra\R$ exist and are continuous for all $1\le j\le n$ and $a_1,\ldots,a_m\ge 0$, including one-sided derivatives where $x_i=0$ for~$1\le i\le k$.
\item[(b)] $f$ is {\it smooth\/} if it is weakly smooth and every $u=(x_1,\ldots,x_m)\in U$ has an open neighbourhood $\ti U$ in $U$ such that for each $j=1,\ldots,l$, either:
\begin{itemize}
\setlength{\itemsep}{0pt}
\setlength{\parsep}{0pt}
\item[(i)] we may uniquely write $f_j(\ti x_1,\ldots,\ti x_m)=F_j(\ti x_1,\ldots,\ti x_m)\cdot\ti x_1^{a_{1,j}}\cdots\ti x_k^{a_{k,j}}$ for all $(\ti x_1,\ldots,\ti x_m)$ in $\ti U$, where $F_j:\ti U\ra(0,\iy)$ is weakly smooth and $a_{1,j},\ldots,a_{k,j}\in\N=\{0,1,2,\ldots\}$, with $a_{i,j}=0$ if $x_i\ne 0$; or 
\item[(ii)] $f_j\vert_{\smash{\ti U}}=0$.
\end{itemize}
\item[(c)] $f$ is {\it interior\/} if it is smooth, and case (b)(ii) does not occur.
\item[(d)] $f$ is a {\it diffeomorphism\/} if it is a smooth bijection with smooth inverse.
\end{itemize}
\end{dfn}

\begin{dfn} 
\label{bc2def2}
Let $X$ be a second countable Hausdorff topological space. An {\it $m$-dimensional chart on\/} $X$ is a pair $(U,\phi)$, where $U\subseteq\R^m_k$ is open for some $0\le k\le m$, and $\phi:U\ra X$ is a homeomorphism with an open set~$\phi(U)\subseteq X$. Let $(U,\phi),(V,\psi)$ be $m$-dimensional charts on $X$. We call $(U,\phi)$ and $(V,\psi)$ {\it compatible\/} if $\psi^{-1}\ci\phi:\phi^{-1}\bigl(\phi(U)\cap\psi(V)\bigr)\ra\psi^{-1}\bigl(\phi(U)\cap\psi(V)\bigr)$ is a diffeomorphism between open subsets of $\R^m_k,\R^m_l$, in the sense of Definition~\ref{bc2def1}(d).

An $m$-{\it dimensional atlas\/} for $X$ is a system $\{(U_a,\phi_a):a\in A\}$ of pairwise compatible $m$-dimensional charts on $X$ with $X=\bigcup_{a\in A}\phi_a(U_a)$. We call such an atlas {\it maximal\/} if it is not a proper subset of any other atlas. Any atlas $\{(U_a,\phi_a):a\in A\}$ is contained in a unique maximal atlas, the set of all charts $(U,\phi)$ of this type on $X$ which are compatible with $(U_a,\phi_a)$ for all~$a\in A$.

An $m$-{\it dimensional manifold with corners\/} is a second countable Hausdorff topological space $X$ equipped with a maximal $m$-dimensional atlas. Usually we refer to $X$ as the manifold, leaving the atlas implicit, and by a {\it chart\/ $(U,\phi)$ on\/} $X$, we mean an element of the maximal atlas.

Now let $X,Y$ be manifolds with corners of dimensions $m,n$, and $f:X\ra Y$ a continuous map. We call $f$ {\it weakly smooth}, or {\it smooth}, or {\it interior}, if whenever $(U,\phi),(V,\psi)$ are charts on $X,Y$ with $U\subseteq\R^m_k$, $V\subseteq\R^n_l$ open, then $\psi^{-1}\ci f\ci\phi:(f\ci\phi)^{-1}(\psi(V))\ra V$ is weakly smooth, or smooth, or interior, respectively, as maps between open subsets of $\R^m_k,\R^n_l$ in the sense of Definition~\ref{bc2def1}.

We write $\Manc$ for the category with objects manifolds with corners $X,Y,$ and morphisms smooth maps $f:X\ra Y$ in the sense above. We will also write $\Mancin$ for the subcategory of $\Manc$ with morphisms interior maps.

We call a manifold with corners $X$ a {\it manifold with boundary\/} if it can be covered by charts $(U,\phi)$ with $U\subseteq\R^m_k$ open for $k=0$ or 1, and a {\it manifold without boundary\/} if it can be covered by $(U,\phi)$ with $U\subseteq\R^m_k$ open for $k=0$ only. Manifolds without boundary are ordinary manifolds. Write $\Man\subset\Manb\subset\Manc$ for the full subcategories of manifolds without, and with, boundary.
\end{dfn}

\subsubsection{(B-)tangent bundles and (b-)cotangent bundles}
\label{bc212}

Manifolds with corners $X$ have two notions of tangent bundle with functorial properties, the ({\it ordinary\/}) {\it tangent bundle\/} $TX\ra X$, the obvious generalization of tangent bundles of manifolds without boundary, and the {\it b-tangent bundle\/} ${}^bTX\ra X$ introduced by Melrose \cite[\S 2.2]{Melr2}, \cite[\S I.10]{Melr3}. Taking duals gives two notions of cotangent bundle~$T^*X,{}^bT^*X$. 

\begin{dfn}
\label{bc2def3}
Let $X$ be an $m$-manifold with corners. We can define {\it vector bundles\/} on $X$ in the usual way. Then $X$ has four natural vector bundles, the {\it tangent bundle\/} $TX\ra X$, its dual the {\it cotangent bundle\/} $T^*X\ra X$, the {\it b-tangent bundle\/} ${}^bTX\ra X$, and its dual the {\it b-cotangent bundle\/} ${}^bT^*X\ra X$. All have rank $m$. The simplest way to define them is in local coordinates. Let $(x_1,\ldots,x_m)\in\R^m_k$ be local coordinates on an open subset $U\subset X$, so that $x_1,\ldots,x_k$ take values in $[0,\iy)$ and $x_{k+1},\ldots,x_m$ take values in $\R$. Then $TX,T^*X,{}^bTX,{}^bT^*X$ are trivial vector bundles on $U$, with bases of sections
\e
\begin{aligned}
TX\vert_U&\cong\Bigl\langle \frac{\pd}{\pd x_1},\ldots,\frac{\pd}{\pd x_m}\Bigr\rangle_\R,\qquad T^*X\vert_U\cong\ban{\d x_1,\ldots,\d x_m}_\R,\\
{}^bTX\vert_U&\cong\Bigl\langle x_1\frac{\pd}{\pd x_1},\ldots,x_k\frac{\pd}{\pd x_k},\frac{\pd}{\pd x_{k+1}},\ldots,\frac{\pd}{\pd x_m}\Bigr\rangle_\R,\\
{}^bT^*X\vert_U&\cong\ban{x^{-1}\d x_1,\ldots,x^{-1}\d x_k,\d x_{k+1},\ldots,\d x_m}_\R.
\end{aligned}
\label{bc2eq1}
\e

Here $x_i\frac{\pd}{\pd x_i}$ and $x_i^{-1}\d x_i$ for $i=1,\ldots,k$ are formal symbols, which are well defined and nonzero when $x_i=0$. The trivializations \eq{bc2eq1} transform as the notation suggests under change of coordinates.

For $E\ra X$ a vector bundle, we write $\Ga^\iy(E)$ for the vector space of smooth sections of $E$. Elements of $\Ga^\iy(TX)$ and $\Ga^\iy({}^bTX)$ are called {\it vector fields\/} and {\it b-vector fields}, respectively. 

There is a natural vector bundle morphism $I_X:{}^bTX\ra TX$ mapping $x_i\frac{\pd}{\pd x_i}\mapsto x_i\cdot\frac{\pd}{\pd x_i}$ for $1\le i\le k$ and $\frac{\pd}{\pd x_i}\mapsto\frac{\pd}{\pd x_i}$ for $k<i\le m$. This induces an injective morphism $(I_X)_*:\Ga^\iy({}^bTX)\ra\Ga^\iy(TX)$. We can think of $\Ga^\iy({}^bTX)$ as the vector subspace of vector fields $v\in\Ga^\iy(TX)$ which are tangent to every boundary stratum of $X$.

Write $C^\iy(X)$ for the $\R$-algebra of smooth functions $f:X\ra\R$. Then vector fields and b-vector fields $v$ on $X$ both act as derivations $v:f\mapsto v(f)$. In coordinates $(x_1,\ldots,x_m)$ with trivialization \eq{bc2eq1}, the action is the obvious one, with $\frac{\pd}{\pd x_i}:f\mapsto\frac{\pd f}{\pd x_i}$ and $x_i\frac{\pd}{\pd x_i}:f\mapsto x_i\frac{\pd f}{\pd x_i}$. There are {\it Lie brackets\/} $[\,,\,]$ on both $\Ga^\iy(TX)$ and $\Ga^\iy({}^bTX)$, such that $[v,w]$ acts on $f\in C^\iy(X)$ as~$[v,w](f)=v(w(f))-w(v(f))$.

There is a {\it de Rham differential\/} $\d:C^\iy(X)\ra\Ga^\iy(T^*X)$ and {\it b-differential\/} ${}^b\d:C^\iy(X)\ra\Ga^\iy({}^bT^*X)$. These act in the obvious way in the trivializations \eq{bc2eq1}, so in particular ${}^b\d$ acts by
\begin{align*}
{}^b\d:f\longmapsto &\sum_{i=1}^k\Bigl(x_i\frac{\pd f}{\pd x_i}\Bigr)x_i^{-1}\d x_i+\sum_{i=k+1}^m\Bigl(\frac{\pd f}{\pd x_i}\Bigr)\d x_i.
\end{align*}

Let $f:X\ra Y$ be a smooth map of manifolds with corners. There are natural vector bundle morphisms $Tf:TX\ra f^*(TY)$ and $T^*f:f^*(T^*Y)\ra T^*X$, the {\it derivatives\/} of $f$, with the obvious functorial properties. If $f$ is an {\it interior\/} map we have morphisms 
${}^bTf:{}^bTX\ra f^*({}^bTY)$ and ${}^bT^*f:f^*({}^bT^*Y)\ra {}^bT^*X$, the {\it b-derivatives\/} of $f$. But these are not defined for non-interior $f$.
\end{dfn}

\subsubsection{The depth stratification, and (b-)normal bundles}
\label{bc213}

\begin{dfn}
\label{bc2def4}
Let $U\subseteq\R^m_k$ be open. For each $u=(u_1,\ldots,u_m)$ in $U$, define the {\it depth\/} $\depth_Uu$ of $u$ in $U$ to be the number of $u_1,\ldots,u_k$ which are zero.

Let $X$ be an $m$-manifold with corners. For $x\in X$, choose a chart $(U,\phi)$ on the manifold $X$ with $\phi(u)=x$ for $u\in U$, and define the {\it depth\/} $\depth_Xx$ of $x$ in $X$ by $\depth_Xx=\depth_Uu$. This is independent of the choice of $(U,\phi)$. For each $k=0,\ldots,m$, define the {\it depth\/ $k$ stratum\/} of $X$ to be
\begin{equation*}
S^k(X)=\bigl\{x\in X:\depth_Xx=k\bigr\}.
\end{equation*}
Then $X=\coprod_{k=0}^mS^k(X)$ and $\overline{S^k(X)}=\bigcup_{l=k}^mS^l(X)$. The {\it interior\/} of $X$ is $X^\ci=S^0(X)$. Each $S^k(X)$ has the unique structure of an $(m-k)$-manifold without boundary, such that if $x\in S^k(X)\subseteq X$ and $(x_1,\ldots,x_m)\in\R^m_k$ are local coordinates on $X$ near $x$ with $x=(0,\ldots,0,x_{k+1},\ldots,x_m)$, then $(x_{k+1},\ldots,x_m)\in\R^{m-k}$ are local coordinates on~$S^k(X)$.
\end{dfn}

The next definition is explained in \cite[\S 2.4]{Joyc2} using the $k$-{\it corners\/} $C_k(X)$, which we do not define, but whose interior is $C_k(X)^\ci\cong S^k(X)$. Our version is obtained by restricting from $C_k(X)$ to~$S^k(X)$.

\begin{dfn} 
\label{bc2def5}
Let $X$ be an $m$-manifold with corners, and $k=0,\ldots,m$. Then on the depth $k$ stratum $S^k(X)$ we have an exact sequence of vector bundles
\e
\xymatrix@C=15pt{ 0 \ar[r] & T(S^k(X)) \ar[rr]^i && TX\vert_{S^k(X)}
\ar[rr]^\pi && N_{S^k(X)} \ar[r] & 0, }
\label{bc2eq2}
\e
where $N_{S^k(X)}$ is the {\it normal bundle\/} of $S^k(X)$ in $X$, of rank $k$. In local coordinates $(x_1,\ldots,x_m)\in\R^m_k$ on $X$ near a point of $S^k(X)$, so that $S^k(X)$ is locally defined by $x_1=\cdots=x_k=0$ and $(x_{k+1},\ldots,x_m)$ are local coordinates on $S^k(X)$, equation \eq{bc2eq2} becomes 
\begin{equation*}
\xymatrix@C=15pt{ 0 \ar[r] & \bigl\langle \frac{\pd}{\pd x_{k+1}},\ldots,\frac{\pd}{\pd x_m}\bigr\rangle_\R \ar[r] & \bigl\langle \frac{\pd}{\pd x_1},\ldots,\frac{\pd}{\pd x_m}\bigr\rangle_\R
\ar[r] & \bigl\langle \frac{\pd}{\pd x_1},\ldots,\frac{\pd}{\pd x_k}\bigr\rangle_\R \ar[r] & 0. }
\end{equation*}

Similarly, for the b-tangent bundle we also have an exact sequence on~$S^k(X)$:
\e
\xymatrix@C=20pt{ 0 \ar[r] & {}^bN_{S^k(X)} \ar[rr]^{{}^bi} && {}^bTX\vert_{S^k(X)}
\ar[rr]^{{}^b\pi} && T(S^k(X)) \ar[r] & 0, }
\label{bc2eq3}
\e
where ${}^bN_{S^k(X)}$ is the {\it b-normal bundle\/} of $S^k(X)$ in $X$. Note that the order in \eq{bc2eq3} is reversed compared to \eq{bc2eq2}. In local coordinates $(x_1,\ldots,x_m)\in\R^m_k$ on $X$ near a point of $S^k(X)$, equation \eq{bc2eq3} becomes
\e
\text{\begin{footnotesize}$\displaystyle
\xymatrix@C=12pt{ 0 \ar[r] & \bigl\langle x_1\frac{\pd}{\pd x_1},\ldots,x_k\frac{\pd}{\pd x_k}\bigr\rangle_\R\! \ar[r] & {\begin{subarray}{l} \ts\bigl\langle x_1\frac{\pd}{\pd x_1},\ldots,x_k\frac{\pd}{\pd x_k}, \\ \ts \;\frac{\pd}{\pd x_{k+1}},\ldots,\frac{\pd}{\pd x_m}\bigr\rangle_\R \end{subarray}}
\ar[r] & \bigl\langle \frac{\pd}{\pd x_{k+1}},\ldots,\frac{\pd}{\pd x_m}\bigr\rangle_\R\! \ar[r] & 0. }$\end{footnotesize}}
\label{bc2eq4}
\e

As in \cite[\S 2.4]{Joyc2}, the b-normal bundle ${}^bN_{S^k(X)}\ra S^k(X)$ has an interesting property: it has a {\it natural flat connection\/} $\nabla^{{}^bN_{S^k(X)}}$, such that $x_1\frac{\pd}{\pd x_1},\ldots,x_k\frac{\pd}{\pd x_k}$ in \eq{bc2eq4} are local constant sections. Define the {\it monoid bundle\/} $M_{S^k(X)}\subset{}^bN_{S^k(X)}$ to be the set of points which in local coordinates as in \eq{bc2eq4} are of the form $a_1(x_1\frac{\pd}{\pd x_1})+\ldots+a_k(x_k\frac{\pd}{\pd x_k})$ for $a_1,\ldots,a_k\in\N=\{0,1,2,\ldots\}$. Then $M_{S^k(X)}\ra X$ is a bundle with fibre $\N^k$, which we think of as a {\it local system of monoids\/} under addition in $\N^k$. We have ${}^bN_{S^k(X)}\cong M_{S^k(X)}\ot_\N\R$, and the flat connection on ${}^bN_{S^k(X)}$ is induced from the local system structure on~$M_{S^k(X)}$.
\end{dfn}

\subsubsection{A natural Lie algebroid on $S^k(X)$}
\label{bc214}

Originally introduced by Pradines \cite{Prad}, Lie algebroids over manifolds provide a common generalization of tangent bundles and bundles of Lie algebras. For a comprehensive study and further references see MacKenzie~\cite{Mack2}. 

\begin{dfn}
\label{bc2def6}
A {\it Lie algebroid\/} $\pi\colon A \ra X$ over a manifold $X$ is a smooth vector bundle $A$ over $X$ endowed with the data of a Lie bracket $[\,,\,]_A$ on its space of smooth sections $\Ga^\iy(A)$, and a map of vector bundles $\rho \colon A \ra TX $, called the {\it anchor map}, such that the following conditions are satisfied for all $s,s' \in \Ga^\iy(A)$ and~$f\in C^\iy(X)$:
\begin{itemize}
\setlength{\itemsep}{0pt}
\setlength{\parsep}{0pt}
\item[(i)] $[\rho(s),\rho(s')]=\rho([s,s']_A)$,
\item[(ii)] $[s,fs']_A=(\rho(s)(f))s' + f[s,s']_A$.  
\end{itemize}
\end{dfn}

\begin{dfn}
\label{bc2def7}
Let $X$ be an $m$-manifold with corners, and $k=0,\ldots,m$. Then there is a natural Lie algebroid on the depth $k$ stratum $S^k(X)$, defined as follows. We take the vector bundle $A$ to be ${}^bTX\vert_{S^k(X)}$, and the anchor map $\rho$ to be ${}^b\pi:{}^bTX\vert_{S^k(X)}\ra T(S^k(X))$ as in \eq{bc2eq3}. If $u,v\in \Ga^\iy({}^bTX\vert_{S^k(X)})$, we choose smooth extensions $\ti u,\ti v$ of $u,v$ to an open neighbourhood $V$ of $S^k(X)$ in $X$ with $\ti u\vert_{S^k(X)}=u$ and $\ti v\vert_{S^k(X)}=v$, and we define the Lie bracket $[\,,\,]_{S^k}$ on $\Ga^\iy({}^bTX\vert_{S^k(X)})$ by $[u,v]_{S^k}=[\ti u,\ti v]\vert_{S^k(X)}$, using the Lie bracket on $\Ga^\iy({}^bTV)$ from \S\ref{bc212}. One can show this is well defined. 

In the local coordinate trivialization \eq{bc2eq4} of ${}^bTX\vert_{S^k(X)}$, we have
\ea
&\Bigl[u_1\cdot x_1\frac{\pd}{\pd x_1}+\cdots+u_k\cdot x_k\frac{\pd}{\pd x_k}+u_{k+1}\frac{\pd}{\pd x_{k+1}}+\cdots+u_m\frac{\pd}{\pd x_m},
\label{bc2eq5}\\
&\quad v_1\cdot x_1\frac{\pd}{\pd x_1}+\cdots+v_k\cdot x_k\frac{\pd}{\pd x_k}+v_{k+1}\frac{\pd}{\pd x_{k+1}}+\cdots+v_m\frac{\pd}{\pd x_m}\Bigr]_{S^k}\nonumber\\
&=\sum_{\begin{subarray}{l} k<i\le m, \\
1\le j\le k \end{subarray}}\Bigl(u_i\frac{\pd v_j}{\pd x_i}-v_i\frac{\pd u_j}{\pd x_i}\Bigr)x_j\frac{\pd}{\pd x_j}+\sum_{k<i,j\le m}\Bigl(u_i\frac{\pd v_j}{\pd x_i}-v_i\frac{\pd u_j}{\pd x_i}\Bigr)\frac{\pd}{\pd x_j},
\nonumber
\ea
for $u_i,v_j$ local functions of the coordinates $(x_{k+1},\ldots,x_m)$ on $S^k(X)$.

The flat connection $\nabla^{{}^bN_{S^k(X)}}$ on ${}^bN_{S^k(X)}$ may be characterized using the Lie algebroid as follows: a local section $u$ of ${}^bN_{S^k(X)}\subset {}^bTX\vert_{S^k(X)}$ is constant under $\nabla^{{}^bN_{S^k(X)}}$ if and only if $[u,v]_{S^k}=0$ for all local sections $v$ of~${}^bTX\vert_{S^k(X)}$.
\end{dfn}

\subsection{Manifolds with g-corners}
\label{bc22}

Manifolds with g-corners were introduced by the second author \cite{Joyc2}, building on ideas in Kottke--Melrose \cite[\S 9]{KoMe}. Much of the theory of manifolds with corners --- in particular, b-(co)tangent bundles ${}^bTX,{}^bT^*X\ra X$ --- extends nicely to manifolds with g-corners. Manifolds with g-corners behave better in some ways, e.g.\ transverse fibre products exist in the category $\Mangc$ of manifolds with g-corners \cite[\S 4.3]{Joyc2}, but they exist in $\Manc$ only under restrictive conditions.

Manifolds with g-corners arise naturally in some problems. For example, Ma'u and Woodward \cite{MaWo} define moduli spaces $\oM_{n,1}$ of `stable $n$-marked quilted discs'. As in \cite[\S 6]{MaWo}, for $n\ge 4$ these are not manifolds with corners, but have an exotic corner structure; in the language of \cite{Joyc2}, the $\oM_{n,1}$ are manifolds with g-corners. We will show in the sequel \cite{ArJo2} that if $S$ is a log smooth log $\C$-scheme or complex analytic space then the `Kato--Nakayama space' $S^\KN$ is a b-complex manifold with g-corners, but it may not be a manifold with corners.

\subsubsection{Monoids}
\label{bc221}

\begin{dfn}
\label{bc2def8}
A {\it monoid\/} is a set $P$ with an associative binary operation, often denoted additively by $+$, with an identity element $0$. Throughout this paper, all monoids are assumed to be commutative. An element of $P$ is called a {\it unit\/} if it has an inverse. The set of units of $P$, which itself is a submonoid of $P$, is denoted by $P^\t$. The {\it Grothendieck group\/} $P^\gp$ of a monoid $P$ is an abelian group with a monoid morphism $\iota \colon P \ra P^\gp$, with the universal property that for any morphism $f:P\ra A$ from $P$ to an abelian group $A$, there is a unique group morphism $g:P^\gp \ra A$ such that $f=g\ci \iota$. A monoid $P$ is called:
\begin{itemize}
\setlength{\itemsep}{0pt}
\setlength{\parsep}{0pt}
\item[(i)] {\it integral\/} if the cancellation property holds, or equivalently if $P \ra P^\gp$ is injective.
\item[(ii)] {\it fine\/} if it is finitely generated and integral.
\item[(iii)] {\it saturated\/} if it is integral and $P=\{m\in P^\gp:nm\in P\subseteq P^\gp$ for some $n\in\Z_{> 0} \}$.
\item[(iv)] {\it sharp} if $P^\t = \{ 0 \}$,
\item[(v)] {\it torsion-free} if it is integral and $P^\gp$ is torsion-free,
\item[(vi)] {\it weakly toric\/} if it is fine, saturated and torsion free.
\item[(vii)] {\it toric\/} if it is weakly toric and sharp. 
\end{itemize}
Every weakly toric monoid is isomorphic to a submonoid of some~$\Z^k$.

Some authors, such as Ogus \cite{Ogus}, call our weakly toric monoids `toric'. 

If $P$ is a weakly toric monoid then $P\cong Q\t\Z^n$ where $Q$ is a toric monoid. We have $\Z^n=P^\t$ and $Q\cong P/P^\t$, so the exact sequence $0\ra\Z^n\ra P\ra Q\ra 0$ is natural. If $P$ is a monoid, the {\it dual monoid\/} is $P^\vee=\Hom(P,\N)$. There is a natural morphism $\eta(P):P\ra(P^\vee)^\vee$ given by $\eta:p\mapsto\bigl(\mu\mapsto\mu(p)\bigr)$ for $p\in P$ and $\mu\in P^\vee$. This is an isomorphism if $P$ is toric.
\end{dfn}

The next definition is taken from Ogus \cite[\S 1.4]{Ogus}.

\begin{dfn}
\label{bc2def9}
An {\it ideal\/} $I$ of a monoid $P$ is a subset $I\subsetneq P$ such that for all $i\in I$ and $p\in P$ we have $p+i\in I$. An ideal $I$ is called {\it prime\/} if $p,q\in P$ and $p+q\in I$ imply that $p\in I$ or~$q\in I$. 

A submonoid $F\subseteq P$ is called a {\it face\/} of $P$ if $p,q\in P$ and $p+q\in F$ imply that $p\in F$ and $q\in F$. If is easy to see that $F\subseteq P$ is a face of $P$ if and only if $I=P\sm F$ is a prime ideal in $P$. This gives a bijection $F\longleftrightarrow I=P\sm F$ between faces $F$ of $P$ and prime ideals $I$ in~$P$.

The {\it codimension\/} $\codim F$ of a face $F\subseteq P$ is the rank of the abelian group $(P/F)^\gp$, which is defined when $(P/F)^\gp$ is finitely generated. If $P$ is weakly toric then~$\rank F+\codim F=\rank P$.
\end{dfn}

\subsubsection{The model spaces $X_P,$ for $P$ a weakly toric monoid}
\label{bc222}

As in \S\ref{bc21}, manifolds with corners are locally modelled on $\R^m_k=[0,\iy)^k\t\R^{m-k}$ for $0\le k\le m$. We will define manifolds with g-corners in \S\ref{bc223} to be locally modelled on spaces $X_P$ depending on a weakly toric monoid $P$. First we define these spaces $X_P$, and `smooth maps' between them.

\begin{dfn} 
\label{bc2def10}
Let $P$ be a weakly toric monoid. Define $X_P$ to be the set of monoid morphisms $x:P\ra[0,\iy)$, where $([0,\iy),\cdot)$ is the monoid $[0,\iy)$ with operation multiplication and identity 1. Define the {\it interior\/} $X_P^\ci\subset X_P$ of $X_P$ to be the subset of $x$ with~$x(P)\subseteq(0,\iy)\subset[0,\iy)$. For each $p\in P$, define a function $\la_p:X_P\ra[0,\iy)$ by $\la_p(x)=x(p)$. Then $\la_{p+q}=\la_p\cdot\la_q$ for $p,q\in P$, and~$\la_0=1$.

Define a topology on $X_P$ to be the weakest topology such that $\la_p:X_P\ra[0,\iy)$ is continuous for all $p\in P$. This makes $X_P$ into a locally compact, Hausdorff topological space, and $X_P^\ci$ is open in $X_P$. If $U\subseteq X_P$ is an open set, define the {\it interior\/} $U^\ci$ of $U$ to be~$U^\ci=U\cap X_P^\ci$.

Note that $X_P$ and $U$ are not manifolds, in general, so smooth functions on $X_P,U$ are not yet defined. Let $f:U\ra\R$ be a continuous function. We say that $f$ is a {\it smooth function\/} $U\ra\R$ if there exist $r_1,\ldots,r_n\in P$, an open subset $W\subseteq[0,\iy)^n$, and a smooth map $g:W\ra\R$ (in the usual sense, as in \S\ref{bc21}), such that for all $x\in U$ we have $(x(r_1),\ldots,x(r_n))\in W$ and
\e
f(x)=g\bigl(x(r_1),\ldots,x(r_n)\bigr)=g\bigl(\la_{r_1}(x),\ldots,\la_{r_n}(x)\bigr).
\label{bc2eq6}
\e
We say that a continuous function $f:U\ra(0,\iy)$ is {\it smooth\/} if $f$ is smooth as a map~$U\ra\R$.

We say that a continuous function $f:U\ra[0,\iy)$ is {\it smooth\/} if on each connected component $U'$ of $U$, we either have $f\vert_{U'}=\la_p\vert_{U'}\cdot h$, where $p\in P$ and $h:U'\ra(0,\iy)$ is smooth, or $f\vert_{U'}=0$. Note that $f$ is smooth as a map $U\ra[0,\iy)$ implies that $f$ is smooth as a map $f:U\ra\R$, but not vice versa. 

Now let $Q$ be another weakly toric monoid, and $V\subseteq X_Q$ an open set. We say that a continuous map $f:U\ra V$ is {\it smooth\/} if $\la_q\ci f:U\ra[0,\iy)$ is smooth for all $q\in Q$. We say that $f$ is a {\it diffeomorphism\/} if $f$ is invertible and $f,f^{-1}$ are smooth. We say that $f$ is {\it interior\/} if $f$ is smooth and~$f(U^\ci)\subseteq V^\ci$. 

The identity map $\id_U:U\ra U$ is smooth and interior. If $P,Q$ are weakly toric monoids then so is $P\t Q$, and there is an isomorphism~$X_{P\t Q}\cong X_P\t X_Q$.

If $P$ is a {\it toric\/} monoid, define $\de_0:P\ra[0,\iy)$ by $\de_0(P)=1$ if $p=0$ and $\de_0(p)=0$ otherwise. Then $\de_0$ is a monoid morphism, since $P^\t=\{0\}$ as $P$ is sharp, so $\de_0\in X_P$. We call $\de_0$ the {\it vertex\/} of $X_P$.
\end{dfn}

Here is \cite[Prop.~3.14(a),(b)]{Joyc2}.

\begin{prop}
\label{bc2prop1}
Suppose\/ $P$ is a weakly toric monoid. Choose generators $p_1,\ldots,p_m$ for $P,$ and a generating set of relations for\/ $p_1,\ldots,p_m$ of the form
\begin{equation*}
a_1^jp_1+\cdots+a_m^jp_m=b_1^jp_1+\cdots+b_m^jp_m\quad\text{in $P$ for $j=1,\ldots,k,$}
\end{equation*}
where\/ $a_i^j,b_i^j\in\N$ for\/ $i=1,\ldots,m$ and\/ $j=1,\ldots,k$. Then:
\begin{itemize}
\setlength{\itemsep}{0pt}
\setlength{\parsep}{0pt}
\item[{\bf(a)}] $\la_{p_1}\t\cdots\t\la_{p_m}:X_P\ra[0,\iy)^m$ is a homeomorphism from $X_P$ to
\e
\!\!\!\!\!\!\!\!\!\! X_P'\!=\!\bigl\{(x_1,\ldots,x_m)\!\in\![0,\iy)^m:x_1^{a_1^j}\cdots x_m^{a_m^j}\!=\!x_1^{b_1^j}\cdots x_m^{b_m^j},\; j\!=\!1,\ldots,k\bigr\},
\label{bc2eq7}
\e
regarding $X_P'$ as a closed subset of\/ $[0,\iy)^m$ with the induced topology.
\item[{\bf(b)}] Let\/ $U\subseteq X_P$ be open, and write\/ $U'=(\la_{p_1}\t\cdots\t\la_{p_m})(U)$ for the corresponding open subset of\/ $X_P'$. Then $f:U\ra\R$ is smooth in the sense of Definition\/ {\rm\ref{bc2def10}} if and only if there exists an open neighbourhood\/ $W$ of\/ $U'$ in $[0,\iy)^m$ and a smooth map $g:W\ra\R$ in the sense of\/ {\rm\S\ref{bc21},} regarding $W$ as a manifold with corners, such that\/ $f=g\ci(\la_{p_1}\t\cdots\t\la_{p_m}):U\ra\R$. The analogues hold for smooth maps $U\ra[0,\iy)$ and\/~$W\ra[0,\iy)$.
\end{itemize}
\end{prop}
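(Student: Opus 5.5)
The plan is to prove \textbf{(a)} directly from the universal property of a monoid presentation. Then for \textbf{(b)} we translate between the two descriptions of smoothness using the fact that every $\la_p$ is a monomial in $\la_{p_1},\ldots,\la_{p_m}$.

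\textbf{Part (a).} Since $p_1,\ldots,p_m$ generate $P$, a morphism $x:P\ra[0,\iy)$ is determined by $(x(p_1),\ldots,x(p_m))$, so $\la_{p_1}\t\cdots\t\la_{p_m}$ is injective. Applying $x$ to the $j$-th relation shows the image lies in $X_P'$.

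For surjectivity, take $(x_1,\ldots,x_m)\in X_P'$ and set $x(\sum_i n_ip_i)=\prod_ix_i^{n_i}$. The listed relations generate the congruence on $\N^m$ with quotient $P$. The map $\N^m\ra[0,\iy)$, $n\mapsto\prod x_i^{n_i}$, is a monoid morphism that respects each generating relation, so it respects the whole congruence and descends to a morphism $P\ra[0,\iy)$. Hence $x$ is well defined.

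For the topology, any $p=\sum n_ip_i$ gives $\la_p=\prod\la_{p_i}^{n_i}$, which is continuous in the $\la_{p_i}$. So the weakest topology making all $\la_p$ continuous is the one pulled back along $\la_{p_1}\t\cdots\t\la_{p_m}$, i.e.\ the subspace topology on $X_P'$. Finally, $X_P'$ is closed because it is cut out by continuous equations.

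\textbf{Part (b), $\Rightarrow$.} Suppose $f=g(\la_{r_1},\ldots,\la_{r_n})$ with $g:W\ra\R$ smooth. Write $r_j=\sum_ic_{ij}p_i$ with $c_{ij}\in\N$, and let $\mu:[0,\iy)^m\ra[0,\iy)^n$ be the monomial map $\mu_j(x)=\prod_ix_i^{c_{ij}}$. This $\mu$ is smooth in the sense of Definition~\ref{bc2def1}. Then $\ti W=\mu^{-1}(W)$ is an open neighbourhood of $U'$ in $[0,\iy)^m$, and $g\ci\mu:\ti W\ra\R$ is smooth with $f=(g\ci\mu)\ci(\la_{p_1}\t\cdots\t\la_{p_m})$.

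\textbf{Part (b), $\Leftarrow$.} For real-valued $f$ this is immediate: take $r_i=p_i$ in \eq{bc2eq6}.

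\textbf{The $[0,\iy)$-valued case.} The forward direction is as above: $f=\la_p h$ pulls back to $\mu_p\cdot(h\text{-extension})$, where $\mu_p$ is a monomial. We need $h$ to extend to a positive smooth function on a neighbourhood; this follows from the $\R$-valued case plus shrinking $W$ to where the extension is positive.

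The converse is the main obstacle. Suppose $g:W\ra[0,\iy)$ is smooth and not identically zero on a component. Definition~\ref{bc2def1}(b) only gives \emph{local} factorizations $g=F\,x_1^{a_1}\cdots x_m^{a_m}$, so locally $f=\la_{p}\,(F\ci\la)$ with $p=\sum a_ip_i$. We must produce a \emph{single} $p$ on each connected component $U'$ of $U$.

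My approach is to exploit the freedom in choosing $p$. If $q\in P$ has $\la_q>0$ on a region, then $\la_p h=\la_{p+q}(h/\la_q)$ there, and $h/\la_q$ is still positive and smooth. Call two local choices equivalent on an overlap if they differ in this way. By the uniqueness clause in Definition~\ref{bc2def1}(b)(i), the exponents $a_i$ are forced near points where $x_i=0$ and may be taken to be $0$ elsewhere. So overlapping local choices are equivalent.

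I would then cover $U'$ by finitely many such neighbourhoods near a given point and use connectedness to show the locus where a fixed choice $p$ works is open and closed. The step I would check most carefully is that the choices glue globally. For this I would use that the set of faces of $P$ met along $U'$ is finite, and replace the local $p$'s by their sum, which works everywhere after dividing by positive monomials.
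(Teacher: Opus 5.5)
The paper gives no proof of this proposition; it is quoted from \cite[Prop.~3.14]{Joyc2}, so I am judging your argument on its own terms.

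\textbf{What is fine.} Part (a), both directions of the $\R$-valued case of (b), and your forward direction in the $[0,\iy)$-valued case are correct. In that forward direction you still have to assemble the component-wise extensions into a single $g$ on a single $W$. The images $A_c$ of the components of $U$ are relatively closed in $A=\bigcup_cA_c$. Hence the sets $\{y:d(y,A_c)<d(y,A\sm A_c)\}$ are pairwise disjoint open neighbourhoods of the $A_c$ in $[0,\iy)^m$. Intersect these with the neighbourhoods on which your $g_c$ are defined.

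\textbf{The gap.} The gluing step in the $[0,\iy)$-valued converse cannot be repaired. With Definition~\ref{bc2def10} read literally (a single $p\in P$ on each connected component), that direction is false.

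Take $P=\N^2$ with $p_1=(1,0)$, $p_2=(0,1)$ and no relations, so $X_P'=[0,\iy)^2$. Let
\[
U=\bigl\{(x_1,x_2)\in[0,\iy)^2:x_2<1\ \text{or}\ x_2>2\ \text{or}\ x_1>1\bigr\}.
\]
This set is open and connected, but it meets the face $\{x_1=0\}$ in two pieces. Choose smooth $\be:\R\ra[0,1]$ with $\be=0$ on $(-\iy,1]$ and $\be=1$ on $[2,\iy)$. Set $g=x_1\bigl(1-\be(x_2)+x_1\be(x_2)\bigr)$ on $W=U$.

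Then $g>0$ where $x_1>0$, $g=x_1$ near $\{0\}\t[0,1)$, and $g=x_1^2$ near $\{0\}\t(2,\iy)$. So $g:W\ra[0,\iy)$ is smooth in the sense of Definition~\ref{bc2def1}. Now suppose $g=x_1^{c_1}x_2^{c_2}h$ on $U$ with $h$ continuous and positive. Letting $x_1\ra0$ along $x_2=\ha$ forces $c_1=1$, and along $x_2=3$ it forces $c_1=2$. So no single $p$ works on $U$.

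Your scheme breaks in three places:
\begin{itemize}
\item The locus where a fixed $p$ works is open but not closed. For $p=(1,0)$ it misses exactly $\{0\}\t(2,\iy)$, which lies in its closure.
\item The two local choices are linked only through the region $x_1>0$, where nothing constrains the exponent of $x_1$.
\item Replacing $p$ by a sum fails, because $\la_{p+p'}/\la_{p'}$ is a positive smooth function only where $\la_{p'}\ne0$.
\end{itemize}
The same example also contradicts the last claim of Example~\ref{bc2ex1} for $P=\N^2$.

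\textbf{What is true.} The intended statement, matching Definition~\ref{bc2def1}(b), is the local version: $f=\la_ph$ or $f=0$ is required only on a neighbourhood of each point. Under that reading the converse needs no gluing at all. Your local factorization $g=F\,x_1^{a_1}\cdots x_m^{a_m}$ already gives $f=\la_p\cdot\bigl(F\ci(\la_{p_1}\t\cdots\t\la_{p_m})\bigr)$ locally, with $p=\sum_ia_ip_i$. The second factor is positive and smooth by the $\R$-valued case.

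Be aware that under this reading your forward direction no longer has one monomial per component. You would then need a separate argument to patch the local extensions $\mu_p\ti h$ into one $[0,\iy)$-valued $g$.
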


\begin{ex}
\label{bc2ex1}
{\bf(i)} When $P=\N$, points of $X_\N$ are monoid morphisms $x:\N\ra([0,\iy),\cdot)$, which may be written uniquely in the form $x(n)=y^n$, $n\in\N$, for $y\in[0,\iy)$. This gives an identification $X_\N\cong[0,\iy)$ mapping $x\mapsto y=x(1)$. In Proposition \ref{bc2prop1}, we may take $P=\N$ to be generated by $p_1=1$, with no relations. Then part (a) shows that $\la_1:X_\N\ra X_\N'=[0,\iy)$ is a homeomorphism, the same identification $X_\N\cong[0,\iy)$ as above.
\smallskip

\noindent{\bf(ii)} When $P=\Z$, points of $X_\Z$ are monoid morphisms $x:\Z\ra\bigl([0,\iy),\cdot\bigr)$, which may be written uniquely in the form $x(n)=e^{ny}$ for $y\in\R$. This gives an identification $X_\Z\cong\R$ mapping $x\mapsto y=\log x(1)$. In Proposition \ref{bc2prop1}, we may take $P=\Z$ to be generated by $p_1=1$ and $p_2=-1$, with one relation $p_1+p_2=0$. Then part (a) shows $\la_1\t\la_{-1}$ is a homeomorphism from $X_\Z$ to
\begin{equation*}
X_\Z'=\bigl\{(x_1,x_2)\in[0,\iy)^2:x_1x_2=1\bigr\}.
\end{equation*}
In terms of the identification $X_\Z\cong\R\in y$ above, we have
\begin{equation*}
X_\Z'=\bigl\{(e^y,e^{-y}):y\in\R\bigr\}\cong\R.
\end{equation*}

\noindent{\bf(iii)} When $P=\N^k\t\Z^{m-k}$, combining {\bf(i)\rm,\bf(ii)}, points of $X_P$ are monoid morphisms $x:P\ra([0,\iy),\cdot)$, which may be written uniquely in the form
\begin{equation*}
x(n_1,\ldots,n_m)=y_1^{n_1}\cdots y_k^{n_k}e^{n_{k+1}y_{k+1}+\cdots+n_my_m}
\end{equation*}
for $(y_1,\ldots,y_m)\in[0,\iy)^k\t\R^{m-k}$. This identifies~$X_{\N^k\t\Z^{m-k}}\cong[0,\iy)^k\t\R^{m-k}$.
\smallskip

Using Proposition \ref{bc2prop1} we see that in each of {\bf(i)}--{\bf(iii)}, the topology on $X_P$, and the notions of smooth functions $U\ra\R$, $U\ra(0,\iy)$, $U\ra[0,\iy)$, agree with the usual topology and smooth functions (in the sense of \S\ref{bc211}) on $[0,\iy),\R,[0,\iy)^k\t\R^{m-k}$. Thus, the $X_P$ for general weakly toric monoids $P$ are a class of smooth spaces generalizing the spaces $\R^m_k=[0,\iy)^k\t\R^{m-k}$ used as local models for manifolds with corners.
\end{ex}

\subsubsection{The category $\Mangc$ of manifolds with g-corners}
\label{bc223}

We can now define the category $\Mangc$ of {\it manifolds with generalized corners}, or {\it g-corners}, extending Definition \ref{bc2def2} for the case of ordinary corners.

\begin{dfn}
\label{bc2def11}
Let $X$ be a second countable Hausdorff topological space. An {\it $m$-dimensional generalized chart}, or {\it g-chart}, on $X$ is a triple $(P,U,\phi)$, where $P$ is a weakly toric monoid with $\rank P=m$, and $P$ is a submonoid of $\Z^k$ for some $k\ge 0$, and $U\subseteq X_P$ is open, for $X_P$ as in \S\ref{bc222}, and $\phi:U\ra X$ is a homeomorphism with an open set $\phi(U)$ in~$X$.

Let $(P,U,\phi),(Q,V,\psi)$ be $m$-dimensional g-charts on $X$. We call $(P,U,\phi)$ and $(Q,V,\psi)$ {\it compatible\/} if $\psi^{-1}\ci\phi:\phi^{-1}\bigl(\phi(U)\cap\psi(V)\bigr)\ra
\psi^{-1}\bigl(\phi(U)\cap\psi(V)\bigr)$ is a diffeomorphism between open subsets of $X_P,X_Q$, in the sense of Definition~\ref{bc2def10}.

An $m$-{\it dimensional generalized atlas}, or {\it g-atlas}, on $X$ is a family $\{(P^i,U^i,\phi^i)\!:\!i\!\in\! I\}$ of pairwise compatible $m$-dimensional g-charts on $X$ with $X\!=\!\bigcup_{i\in I}\phi^i(U^i)$. We call such a g-atlas {\it maximal\/} if it is not a proper subset of any other g-atlas. Any g-atlas $\{(P^i,U^i,\phi^i):i\in I\}$ is contained in a unique maximal g-atlas, the family of all g-charts $(P,U,\phi)$ on $X$ compatible with $(P^i,U^i,\phi^i)$ for all~$i\in I$.

An $m$-{\it dimensional manifold with generalized corners}, or {\it g-corners}, is a second countable Hausdorff topological space $X$ with a maximal $m$-dimensional g-atlas. Usually we refer to $X$ as the manifold, leaving the g-atlas implicit. By a {\it g-chart\/ $(P,U,\phi)$ on\/} $X$, we mean an element of the maximal g-atlas. Write~$\dim X=m$.

The {\it interior\/} $X^\ci$ of an $m$-manifold with g-corners $X$ is the dense open subset $X^\ci\subset X$ such that if $(P,U,\phi)$ is a g-chart on $X$ then $\phi^{-1}(X^\ci)=U^\ci$, where $U^\ci\subseteq U\subseteq X_P$ is as in Definition \ref{bc2def10}, so $(P,U^\ci,\phi)$ is a g-chart on~$X^\ci$.

Let $X,Y$ be manifolds with g-corners, and $f:X\ra Y$ a continuous map of the underlying topological spaces. We say that $f:X\ra Y$ is {\it smooth\/} if for all g-charts $(P,U,\phi)$ on $X$ and $(Q,V,\psi)$ on $Y$, the map
\e
\psi^{-1}\ci f\ci\phi: (f\ci\phi)^{-1}(\psi(V))\longra V
\label{bc2eq8}
\e
is a smooth map between the open subsets $(f\ci\phi)^{-1}(\psi(V))\subseteq U\subseteq X_P$ and $V\subseteq X_Q$, in the sense of Definition \ref{bc2def10}. We say that $f:X\ra Y$ is a {\it diffeomorphism\/} if it is a bijection, and both $f:X\ra Y$, $f^{-1}:Y\ra X$ are smooth.

We say that a smooth map $f:X\ra Y$ is {\it interior\/} if $f(X^\ci)\subseteq Y^\ci$. Equivalently, $f$ is interior if the maps \eq{bc2eq8} are interior in the sense of Definition \ref{bc2def10} for all $(P,U,\phi),(Q,V,\psi)$.

Manifolds with g-corners and smooth maps, or interior maps, form a category. Write $\Mangc$ for the category with objects manifolds with g-corners $X,Y$ and morphisms smooth maps $f:X\ra Y$, and $\Mangcin\subset\Mangc$ for the (non-full) subcategory with morphisms interior maps.

By Example \ref{bc2ex1}, we may identify the category $\Manc$ of manifolds with corners with the full subcategory $\Manc\subset\Mangc$ of manifolds with g-corners $X$ which may be covered by charts $(P,U,\phi)$ with $P\cong\N^k\t\Z^{m-k}$ for $0\le k\le m$. Then we have subcategories $\Man\subset\Manb\subset\Manc\subset\Mangc$ and $\Mancin=\Manc\cap\Mangcin$ in $\Mangc$.
\end{dfn}

\begin{ex}
\label{bc2ex2}
If $P$ is a weakly toric monoid, the space $X_P$ in \S\ref{bc222} has a g-atlas $\bigl\{(P,X_P,\id_{X_P})\bigr\}$. This extends to a unique maximal g-atlas, making $X_P$ into a manifold with g-corners, of dimension $\rank P$. All manifolds with g-corners are locally isomorphic to $X_P$ for some $P$. By \cite[Prop.~3.18(a)]{Joyc2}, the interior $X_P^\ci$ is canonically diffeomorphic to $\Hom(P^\gp,\R)\cong\R^{\rank P}$, and is a manifold without boundary. More generally, if $X$ is any manifold with g-corners then $X^\ci$ is a manifold without boundary, that is, it lies in $\Man\subset\Mangc$.
\end{ex}

\begin{ex}
\label{bc2ex3}
Probably the simplest example of a manifold with g-corners which is not an ordinary manifold with corners is the space $X_P$ from the rank 2 toric monoid
\begin{equation*}
P=\bigl\{(a,b,c)\in \Z^2:2a\ge b,\; b\ge 0\bigr\}.
\end{equation*}
Write $p_1=(1,0)$, $p_2=(1,2)$, and $p_3=(1,1)$. Then $p_1,p_2,p_3$ are generators for $P$, subject to the relation $p_1+p_2=2p_3$. Thus Proposition \ref{bc2prop1}(a) gives
\begin{equation*}
X_P\cong \bigl\{(x_1,x_2,x_3)\in[0,\iy)^2:x_1x_2=x_3^2\bigr\}.
\end{equation*}

Topologically, and with depth stratifications, $X_P$ agrees with $[0,\iy)^2$ with coordinates $(x_1,x_2)$. But the smooth structures are different: the function $x_3=\sqrt{x_1x_2}$ is smooth on $X_P$, but not on $[0,\iy)^2$.
\end{ex}

\begin{ex}
\label{bc2ex4}
Another example of a manifold with g-corners which is not a manifold with corners is the space $X_Q$ from the rank 3 toric monoid
\begin{equation*}
Q=\bigl\{(a,b,c)\in \Z^3:a\ge 0,\; b\ge 0,\; a+b\ge c\ge 0\bigr\}.
\end{equation*}
Write $q_1=(1,0,0)$, $q_2=(0,1,1)$, $q_3=(0,1,0)$, $q_4=(1,0,1)$. Then $q_1,q_2,q_3,q_4$ are generators for $Q$, subject to the relation $q_1+q_2=q_3+q_4$. Hence by Proposition \ref{bc2prop1}(a) we may identify
\e
X_Q\cong \bigl\{(x_1,x_2,x_3,x_4)\in[0,\iy)^4:x_1x_2=x_3x_4\bigr\}.
\label{bc2eq9}
\e

\begin{figure}[htb]
\centerline{$\splinetolerance{.8pt}
\begin{xy}
0;<1mm,0mm>:
,(0,0)*{\bu}
,(14,0)*{(0,0,0,0) }
,(-30,-20)*{\bu}
,(-39,-18)*{(x_1,0,0,0)}
,(30,-20)*{\bu}
,(39.5,-18)*{(0,0,x_3,0)}
,(-30,-30)*{\bu}
,(-39,-32)*{(0,0,0,x_4)}
,(30,-30)*{\bu}
,(39.5,-32)*{(0,x_2,0,0)}
,(0,-30)*{\bu}
,(0,-33)*{(0,x_2,0,x_4)}
,(0,-20)*{\bu}
,(0,-17)*{(x_1,0,x_3,0)}
,(-30,-25)*{\bu}
,(-40.5,-25)*{(x_1,0,0,x_4)}
,(30,-25)*{\bu}
,(40.5,-25)*{(0,x_2,x_3,0)}
,(0,0);(-45,-30)**\crv{}
?(.8444)="aaa"
?(.85)="bbb"
?(.75)="ccc"
?(.65)="ddd"
?(.55)="eee"
?(.45)="fff"
?(.35)="ggg"
?(.25)="hhh"
?(.15)="iii"
?(.05)="jjj"
,(0,0);(45,-30)**\crv{}
?(.8444)="aaaa"
?(.85)="bbbb"
?(.75)="cccc"
?(.65)="dddd"
?(.55)="eeee"
?(.45)="ffff"
?(.35)="gggg"
?(.25)="hhhh"
?(.15)="iiii"
?(.05)="jjjj"
,(0,0);(-40,-40)**\crv{}
?(.95)="a"
?(.85)="b"
?(.75)="c"
?(.65)="d"
?(.55)="e"
?(.45)="f"
?(.35)="g"
?(.25)="h"
?(.15)="i"
?(.05)="j"
,(0,0);(40,-40)**\crv{}
?(.95)="aa"
?(.85)="bb"
?(.75)="cc"
?(.65)="dd"
?(.55)="ee"
?(.45)="ff"
?(.35)="gg"
?(.25)="hh"
?(.15)="ii"
?(.05)="jj"
,"a";"aa"**@{.}
,"b";"bb"**@{.}
,"c";"cc"**@{.}
,"d";"dd"**@{.}
,"e";"ee"**@{.}
,"f";"ff"**@{.}
,"g";"gg"**@{.}
,"h";"hh"**@{.}
,"i";"ii"**@{.}
,"j";"jj"**@{.}
,"a";"aaa"**@{.}
,"b";"bbb"**@{.}
,"c";"ccc"**@{.}
,"d";"ddd"**@{.}
,"e";"eee"**@{.}
,"f";"fff"**@{.}
,"g";"ggg"**@{.}
,"h";"hhh"**@{.}
,"i";"iii"**@{.}
,"j";"jjj"**@{.}
,"aa";"aaaa"**@{.}
,"bb";"bbbb"**@{.}
,"cc";"cccc"**@{.}
,"dd";"dddd"**@{.}
,"ee";"eeee"**@{.}
,"ff";"ffff"**@{.}
,"gg";"gggg"**@{.}
,"hh";"hhhh"**@{.}
,"ii";"iiii"**@{.}
,"jj";"jjjj"**@{.}
,(-30,-20);(30,-20)**@{--}
,(-30,-20);(-30,-30)**\crv{}
,(-30,-30);(30,-30)**\crv{}
,(30,-30);(30,-20)**\crv{}
\end{xy}$}
\caption{3-manifold with g-corners $X_Q$ in \eq{bc2eq9}}
\label{bc2fig1}
\end{figure}
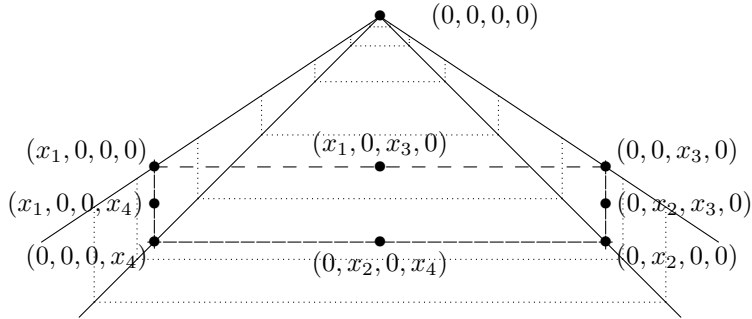

We sketch $X_Q$ in Figure \ref{bc2fig1}. We picture it as a 3-dimensional infinite pyramid on a square base. It has one vertex $(0,0,0,0)$, four 1-dimensional edges of points $(x_1,0,0,0),(0,x_2,0,0), (0,0,x_3,0),\ab(0,\ab 0,\ab 0,\ab x_4)$, four 2-dimensional faces of points $(x_1,0,x_3,0)$, $(x_1,0,0,x_4)$, $(0,x_2,x_3,0)$, $(0,x_2,0,x_4)$, and an interior $X_Q^\ci\ab\cong\R^3$ of points $(x_1,x_2,x_3,x_4)$ with all $x_j\ne 0$. Then $X_Q$ is not a manifold with corners near $(0,0,0,0)$, as we can see from the non-simplicial face structure.
\end{ex}

The next proposition follows from material in \cite[\S 3.6]{Joyc2}.

\begin{prop}
\label{bc2prop2}
Let\/ $X$ be an $m$-manifold with g-corners, and\/ $x\in X$. Then there exists a g-chart\/ $(R,U,\phi)$ on $X$ such that\/ $R\cong P_x\t\Z^{m-k},$ for\/ $P_x$ a \begin{bfseries}toric\end{bfseries} monoid of rank\/ $k$ for $0\le k\le m,$ so that\/ $X_R\cong X_{P_x}\t X_{\Z^{m-k}}\cong X_{P_x}\t\R^{m-k},$ and we have $x=\phi(\de_0,(x_{k+1},\ldots,x_m))$ for some $(\de_0,(x_{k+1},\ldots,x_m))$ in $U\subseteq X_{P_x}\t\R^{m-k},$ where $\de_0\in X_{P_x}$ is the \begin{bfseries}vertex\end{bfseries} of\/ $X_{P_x}$ as in Definition\/ {\rm\ref{bc2def10}}. This $P_x$ depends only on $X,x$ up to isomorphism.
\end{prop}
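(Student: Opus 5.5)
Begin with an arbitrary g-chart $(P,U',\phi')$ with $x=\phi'(u)$, $u\in U'\subseteq X_P$. By Definition \ref{bc2def8}, write $P\cong Q\t\Z^n$ with $Q$ toric, so $X_P\cong X_Q\t\R^n$. The point $u$ is a monoid morphism $u:P\ra[0,\iy)$. Let $F=\{p\in P:u(p)>0\}$. Because $u$ is multiplicative and $(0,\iy)$ has no zero divisors, $F$ is a submonoid, and it is a face. Since $P$ is weakly toric, $F$ is finitely generated and saturated, and $P\sm F$ is a prime ideal. Localize along this face: let $P_F=P+F^\gp\subseteq P^\gp$, the submonoid generated by $P$ and $-F$. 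Standard toric facts show $P_F$ is weakly toric with units $P_F^\t=F^\gp$. Hence $P_F\cong P_x\t\Z^{m-k}$, where $P_x=P_F/F^\gp$ is toric of rank $k=\codim F$ by Definition \ref{bc2def9}.

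Next I compare $X_{P_F}$ with $X_P$. Restriction along $P\hookra P_F$ identifies $X_{P_F}$ with the open subset $\{y\in X_P: y(f)>0\ \text{for all}\ f\in F\}$. It suffices to check this on the generators of $F$, since a morphism positive on $F$ extends uniquely to $F^\gp$ by inverses. This subset contains $u$ and is open, as $F$ is finitely generated. Smoothness is compatible in both directions: $\la_{-f}=\la_f^{-1}$ is smooth where $\la_f>0$, and every function of the form \eq{bc2eq6} on $X_P$ remains of that form on $X_{P_F}$. So, after shrinking $U'$, we get a g-chart $(P_F,U,\phi)$ compatible with the original. Choosing a splitting $P_F\cong P_x\t F^\gp$ then gives $X_{P_F}\cong X_{P_x}\t\R^{m-k}$.

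It remains to see that $u$ maps to $(\de_0,\cdot)$. Under the splitting, the $X_{P_x}$-component is $u$ restricted to $P_x$, viewed as a complement of $F^\gp$ in $P_F$. A nonzero element of $P_x$ is not in $F^\gp$, so it is represented by an element of $P\sm F$ plus a unit, and $u$ vanishes on it. The value at $0$ is $1$. So the component is exactly $\de_0$. For the embedding of $P_F$ into some $\Z^N$ required in Definition \ref{bc2def11}, use $P^\gp$ torsion-free.

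For uniqueness of $P_x$ up to isomorphism, I would recover it intrinsically: $P_x$ should be the stalk at $x$ of the sheaf of smooth $[0,\iy)$-valued functions modulo smooth positive functions (cf.\ Definition \ref{bc2def10}), with monoid structure given by multiplication. In a chart of this form near the vertex, a smooth $[0,\iy)$ function not identically zero near $x$ is $\la_p\cdot h$ with $h>0$. So the stalk monoid is $P_F/P_F^\t\cong P_x$, provided $p$ is determined by the germ. This last point, that $\la_p h=\la_{p'}h'$ near $(\de_0,\cdot)$ forces $p-p'\in F^\gp$, is the main obstacle. I would handle it by restricting to the interior, where $\log\la_p$ are linear functions on $\Hom(P^\gp,\R)$. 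Then letting the point tend to the vertex along rays in the cone dual to $P_x$ forces $p=p'$ in $P_x$. The alternative is to invoke \cite[\S 3.6]{Joyc2} directly.
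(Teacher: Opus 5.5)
Your proof is correct, and it is more self-contained than what the paper offers. The paper gives no argument for this proposition; it only points to \cite[\S 3.6]{Joyc2}. The one intrinsic description of $P_x$ it records is in Definition~\ref{bc2def14}, where $P_x\cong(M_{S^k(X)}\vert_x)^\vee$ is read off from the monoid bundle inside the b-normal bundle.

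Your existence step is the natural elementary argument. You localize $P$ at the face $F=\supp u$, identify $X_{P+F^\gp}$ with the open set where all $\la_f$, $f\in F$, are positive (smooth in both directions, since $1/\la_f$ is smooth there), split off $F^\gp=(P+F^\gp)^\t$, and check that $u$ lands on the vertex. The monoid facts you call standard all follow quickly from saturation of $P$ and the face property: $P+F^\gp$ is saturated, its units are exactly $F^\gp$, and $P^\gp/F^\gp$ is torsion-free.

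For uniqueness you use a different invariant from the paper's. Yours is the monoid of germs at $x$ of smooth $[0,\iy)$-valued functions modulo positive smooth germs. Say ``nonzero germs'' from the start; the zero germ only adjoins an absorbing element. Your ray argument correctly proves that $p\mapsto[\la_p]$ is injective on $P_x$. It approaches $\de_0$ along $y_s(q)=e^{-s\be(q)}$ with $\be$ in the interior of the dual cone of the toric monoid $P_x$, and uses that $\log(h'/h)$ stays bounded near $x$.

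The two routes buy different things:
\begin{itemize}
\item Your route gives an elementary proof that needs none of the b-normal bundle machinery.
\item The paper's route directly yields $P_x\cong(M_{S^k(X)}\vert_x)^\vee$. This is exactly how $Q$ is defined in Theorem~\ref{bc3thm3}, and it shows that the $P_x$ are fibres of a local system along $S^k(X)$.
\end{itemize}
The two invariants are linked by the pairing $(\al,[\la_qh])\mapsto v_\al(\log(\la_qh))\vert_x=\al(q)$ for $\al\in{}^bN_{S^k(X)}\vert_x$. This uses Definition~\ref{bc2def12} and the fact that b-normal vector fields annihilate smooth functions at points of $S^k(X)$.
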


\subsubsection{B-tangent bundles and b-cotangent bundles}
\label{bc224}
 
In \S\ref{bc212} we explained that manifolds with corners $X$ have two notions of tangent bundle $TX,{}^bTX$. The b-tangent bundle ${}^bTX$ extends nicely to manifolds with g-corners, but as in \cite[Ex.~3.38]{Joyc2} the tangent bundle $TX$ does not. Here is the analogue of Definition~\ref{bc2def3}.

\begin{dfn}
\label{bc2def12}
Let $X$ be an $m$-manifold with g-corners. We can define {\it vector bundles\/} on $X$ in the usual way \cite[Def.~3.37]{Joyc2}. Then $X$ has two natural rank $m$ vector bundles, the {\it b-tangent bundle\/} ${}^bTX\ra X$, and the dual {\it b-cotangent bundle\/} ${}^bT^*X\ra X$. They are defined in~\cite[Def.s 3.39 \& 3.43]{Joyc2}.

If $P$ is a weakly toric monoid, the b-tangent bundle ${}^bTX_P$ of the space $X_P$ in \S\ref{bc222}, regarded as a manifold with g-corners, is canonically trivial with fibre $\Hom(P^\gp,\R)$. More generally, in a g-chart $(P,U,\phi)$ on $X$, ${}^bTX\vert_{\phi(U)}$ is naturally isomorphic to the trivial bundle on $\phi(U)$ with fibre $\Hom(P^\gp,\R)$.

Write $\Ga^\iy({}^bTX)$ for the vector space of smooth sections of ${}^bTX$. Elements of $\Ga^\iy({}^bTX)$ are called {\it b-vector fields} on $X$. Write $C^\iy(X)$ for the $\R$-algebra of smooth functions $f:X\ra\R$. Then b-vector fields $v$ on $X$ act as derivations $v:f\mapsto v(f)$ on~$C^\iy(X)$.

On the model space $X_P$, we have smooth functions $\la_p\in C^\iy(X_P)$ for $p\in P$ as in Definition \ref{bc2def10}. For $\al\in\Hom(P^\gp,\R)$, write $v_\al\in\Ga^\iy({}^bTX)$ for the corresponding constant section under the trivialization ${}^bTX_P\cong \Hom(P^\gp,\R)$. Then $v_\al(\la_p)=\al(p)\cdot\la_p$. More generally, given a g-chart $(P,U,\phi)$ on $X$, over the open set $\phi(U)\subset X$ we have smooth functions $\la_p$ and b-vector fields $v_\al$, and the action of b-vector fields on functions again satisfies~$v_\al(\la_p)=\al(p)\cdot\la_p$.

There is a {\it Lie bracket\/} $[\,,\,]$ on $\Ga^\iy({}^bTX)$, such that $[v,w]$ acts on $f\in C^\iy(X)$ as $[v,w](f)=v(w(f))-w(v(f))$. On $X_P$ we have $[v_\al,v_\be]=0$ for $\al,\be\in\Hom(P^\gp,\R)$, and more generally for $\al,\be\in \Hom(P^\gp,\R)$ and $p,q\in P$\begin{equation*}
\bigl[\la_pv_\al,\la_qv_\be\bigl]=\la_{p+q}\bigl(\al(q)v_\be-\be(p)v_\al\bigr).
\end{equation*}
The same formulae work over a g-chart $(P,U,\phi)$ on $X$.

There is a {\it de Rham b-differential\/} ${}^b\d:C^\iy(X)\ra\Ga^\iy({}^bT^*X)$ satisfying the usual Leibniz rule, which has $v\cdot{}^b\d f=v(f)$ for $v\in\Ga^\iy({}^bTX)$ and $f\in C^\iy(X)$.

Let $f:X\ra Y$ be an interior map of manifolds with g-corners. There are natural vector bundle morphisms ${}^bTf:{}^bTX\ra f^*({}^bTY)$ and ${}^bT^*f:f^*({}^bT^*Y)\ra {}^bT^*X$, the {\it b-derivatives\/} of $f$.

If $X$ lies in $\Manc\subset\Mangc$ then ${}^bTX,{}^bT^*X$ above are canonically isomorphic to those in Definition \ref{bc2def3}, and all the structures on ${}^bTX,{}^bT^*X$ above agree with those in Definition~\ref{bc2def3}.
\end{dfn}

\subsubsection{The depth stratification, and (b-)normal bundles}
\label{bc225}

The next definition, an analogue of Definition \ref{bc2def4}, is taken from~\cite[Def.~3.26]{Joyc2}.

\begin{dfn}
\label{bc2def13}
Let $P$ be a weakly toric monoid, and $F$ a face of $P$, as in Definition \ref{bc2def9}. For $X_F,X_P$ as in Definition \ref{bc2def10}, define an inclusion map $i_F^P:X_F\hookra X_P$ by $i_F^P(y)=\bar y$, where $y\in X_F$ so that $y:F\ra[0,\iy)$ is a monoid morphism, and $\bar y:P\ra[0,\iy)$ is defined by
\begin{equation*}
\bar y(p)=\begin{cases} y(p), & p\in F, \\ 0, & p\in P\sm F. \end{cases}
\end{equation*}
The condition in Definition \ref{bc2def9} that if $p,q\in P$ with $p+q\in F$ then $p,q\in F$ implies that $\bar y$ is a monoid morphism, so $\bar y\in X_P$. Then $i_F^P:X_F\ra X_P$ is a smooth, injective map of manifolds with g-corners.

For each $x\in X_P$, define the {\it support\/} of $x$ to be
\begin{equation*}
\supp x=\bigl\{p\in P: x(p)\ne 0\bigr\}.
\end{equation*}
It is easy to see that $\supp x$ is a face of $P$. For each face $F$ of $P$, write
\begin{equation*}
X^P_F=\bigl\{x\in X_P:\supp x=F\bigr\}.
\end{equation*}
Then the interior $X_P^\ci$ is $X^P_P$, and we have a decomposition
\e
X_P=\coprod\nolimits_{\text{faces $F$ of $P$}}X^P_F.
\label{bc2eq10}
\e

From the definition of $i_F^P:X_F\hookra X_P$, it is easy to see that
\e
X^P_F=i_F^P(X_F^\ci)\quad\text{and}\quad \ov{X^P_F}=i_F^P(X_F)=\coprod\nolimits_{\text{faces $G$ of $P$ with $G\subseteq F$}}X^P_G,
\label{bc2eq11}
\e
where $\ov{X^P_F}$ is the closure of $X^P_F$ in $X_P$. As in Example \ref{bc2ex2} we have a diffeomorphism $X^P_F\cong X_F^\ci \cong \R^{\rank F}=\R^{\rank P-\codim F}$. Thus \eq{bc2eq10} is a locally closed stratification of $X_P$ into smooth manifolds without boundary.

For $x\in X_P$, define the {\it depth\/} $\depth_{X_P}x$ to be $\codim(\supp x)=\rank P-\rank(\supp x)$, so that $\depth_{X_P}x=0,\ldots,\dim X_P$. For each $k=0,\ldots,\dim X_P$, define the {\it depth\/ $k$ stratum\/} of $X_P$ to be
\begin{equation*}
S^k(X_P)=\bigl\{x\in X_P:\depth_{X_P}x=k\bigr\}.
\end{equation*}
Then the interior $X_P^\ci$ is $S^0(X_P)$, and
\begin{equation*}
S^k(X_P)=\coprod\nolimits_{\text{faces $F$ of $P$: $\codim F=k$}}X^P_F,
\end{equation*}
so that $S^k(X_P)$ is a smooth manifold without boundary of dimension $\dim X_P-k$, and \eq{bc2eq11} implies that $\overline{S^k(X_P)}=\bigcup_{l=k}^{\dim X_P}S^l(X_P)$. Hence
\begin{equation*}
X_P=\coprod\nolimits_{k=0}^{\dim X_P}S^k(X_P)
\end{equation*}
is a locally closed stratification of $X_P$ into smooth manifolds without boundary.

Let $X$ be an $m$-manifold with g-corners. For $x\in X$, choose a g-chart $(P,U,\phi)$ on $X$ with $\phi(u)=x$ for $u\in U$, and define the {\it depth\/} $\depth_Xx$ of $x$ in $X$ by $\depth_Xx=\depth_{X_P}u$. This is independent of the choice of $(P,U,\phi)$. For each $k=0,\ldots,m$, define the {\it depth\/ $k$ stratum\/} of $X$ to be
\begin{equation*}
S^k(X)=\bigl\{x\in X:\depth_Xx=k\bigr\}.
\end{equation*}
Then $X=\coprod_{k=0}^mS^k(X)$. Each $S^k(X)$ is a manifold without boundary of dimension $m-k$, with $S^0(X)=X^\ci$, and $\overline{S^k(X)}=\bigcup_{l=k}^mS^l(X)$, since this holds for the local models~$X_P$.
\end{dfn}

Here is the analogue of Definition~\ref{bc2def5}.

\begin{dfn} 
\label{bc2def14}
Let $X$ be an $m$-manifold with g-corners, and $k=0,\ldots,m$. Then as for \eq{bc2eq3}, it is proved in \cite[\S 3.6]{Joyc2} that we have a natural exact sequence of vector bundles on~$S^k(X)$
\e
\xymatrix@C=20pt{ 0 \ar[r] & {}^bN_{S^k(X)} \ar[rr]^{{}^bi} && {}^bTX\vert_{S^k(X)}
\ar[rr]^{{}^b\pi} && T(S^k(X)) \ar[r] & 0, }
\label{bc2eq12}
\e
where ${}^bN_{S^k(X)}$ is the {\it b-normal bundle\/} of $S^k(X)$ in $X$, of rank $k$. Furthermore, ${}^bN_{S^k(X)}\ra S^k(X)$ has a {\it natural flat connection\/} $\nabla^{{}^bN_{S^k(X)}}$, and there is a {\it monoid bundle\/} $M_{S^k(X)}\subset{}^bN_{S^k(X)}$ such that $M_{S^k(X)}\ra X$ is a local system of rank $k$ toric monoids over $X$, such that ${}^bN_{S^k(X)}\cong M_{S^k(X)}\ot_\N\R$, and the flat connection on ${}^bN_{S^k(X)}$ is induced from the local system structure on~$M_{S^k(X)}$.

We can characterize these locally near $x\!\in\! S^k(X)$ as follows: Proposition \ref{bc2prop2} gives a g-chart $(P,U,\phi)$ on $X$ with $P\cong P_x\t\Z^{m-k}$ for $P_x$ a rank $k$ toric monoid, so that $X_P\cong X_{P_x}\t X_{\Z^{m-k}}\cong X_{P_x}\t\R^{m-k},$ and $x=\phi(\de_0,(x_{k+1},\ldots,x_m))$ for some $(\de_0,(x_{k+1},\ldots,x_m))$ in $U\subseteq X_{P_x}\t\R^{m-k},$ where $\de_0\in X_{P_x}$ is the vertex of $X_{P_x}$ as in Definition \ref{bc2def10}. Then $S^k(X)\cap\Phi(U)=\Phi\bigl(U\cap(\{\de_0\}\t\R^{m-k})\bigr)$, giving a local identification $S^k(X)\cong\R^{m-k}$, consistent with $S^k(X)$ being a manifold without boundary of dimension $m-k$. Also on $S^k(X)\cap\Phi(U)$, equation \eq{bc2eq12} is canonically identified with the exact sequence of trivial vector bundles
\e
\xymatrix@C=12pt{ 0 \ar[r] & \Hom(P_x^\gp,\R) \ar[rr]^(0.38){{}^bi} && {\begin{subarray}{l} \ts \quad \Hom(P^\gp,\R)\cong \\ \ts\Hom(P_x^\gp,\R)\op\R^{m-k}\end{subarray}}
\ar[rr]^(0.67){{}^b\pi} && \R^{m-k} \ar[r] & 0. }
\label{bc2eq13}
\e
The flat connection on ${}^bN_{S^k(X)}$ is compatible on $S^k(X)\cap\Phi(U)$ with the trivialization ${}^bN_{S^k(X)}\cong \Hom(P_x^\gp,\R)$. Under the identification between \eq{bc2eq12} and \eq{bc2eq13}, the monoid bundle $M_{S^k(X)}$ on $S^k(X)\cap\Phi(U)$ is identified with $P_x^\vee=\Hom(P_x,\N)\subset \Hom(P_x,\R)=\Hom(P_x^\gp,\R)$, where $P_x^\vee$ is the dual monoid from Definition \ref{bc2def8}. Note that $(P_x^\vee)^\vee\cong P_x$ as $P_x$ is toric, so $P_x\cong (M_{S^k(X)}\vert_x)^\vee$.
\end{dfn}

\subsubsection{A natural Lie algebroid on $S^k(X)$}
\label{bc226}

Here is the g-corners analogue of Definition~\ref{bc2def7}.

\begin{dfn}
\label{bc2def16}
Let $X$ be an $m$-manifold with corners, and $k=0,\ldots,m$. Then there is a natural Lie algebroid on the depth $k$ stratum $S^k(X)$, defined as follows. We take the vector bundle $A$ to be ${}^bTX\vert_{S^k(X)}$, and the anchor map $\rho$ to be ${}^b\pi:{}^bTX\vert_{S^k(X)}\ra T(S^k(X))$ as in \eq{bc2eq12}. If $u,v\in \Ga^\iy({}^bTX\vert_{S^k(X)})$, we choose smooth extensions $\ti u,\ti v$ of $u,v$ to an open neighbourhood $V$ of $S^k(X)$ in $X$ with $\ti u\vert_{S^k(X)}=u$ and $\ti v\vert_{S^k(X)}=v$, and we define the Lie bracket $[\,,\,]_{S^k}$ on $\Ga^\iy({}^bTX\vert_{S^k(X)})$ by $[u,v]_{S^k}=[\ti u,\ti v]\vert_{S^k(X)}$, using the Lie bracket on $\Ga^\iy({}^bTV)$ from \S\ref{bc224}. One can show this is well defined. In the local trivialization ${}^bTX\vert_{S^k(X)}\!\cong\!\Hom(P_x^\gp,\R)\!\op\!\R^{m-k}$ of \eq{bc2eq12}--\eq{bc2eq13} near $x\!\in\! S^k(X)$, we have
\begin{align*}
&\Bigl[\la\op u_{k+1}\frac{\pd}{\pd x_{k+1}}+\cdots+u_m\frac{\pd}{\pd x_m},\mu\op v_{k+1}\frac{\pd}{\pd x_{k+1}}+\cdots+v_m\frac{\pd}{\pd x_m}\Bigr]_{S^k}\\
&=\sum_{i=k+1}^m\Bigl(u_i\frac{\pd \mu}{\pd x_i}-v_i\frac{\pd \la}{\pd x_i}\Bigr)
\op\sum_{i,j=k+1}^m\Bigl(u_i\frac{\pd v_j}{\pd x_i}-v_i\frac{\pd u_j}{\pd x_i}\Bigr)\frac{\pd}{\pd x_j},
\end{align*}
where $(x_{k+1},\ldots,x_m)$ are local coordinates  on $S^k(X)$, and $\la,\mu$ local functions $S^k(X)\ra \Hom(P_x^\gp,\R)$, and $u_i,v_j$ local functions of the coordinates $(x_{k+1},\ab\ldots,\ab x_m)$, and we write $\frac{\pd}{\pd x_{k+1}},\ldots,\frac{\pd}{\pd x_m}$ for the basis of $\R^{m-k}$ in \eq{bc2eq13}, to make our notation compatible with~\eq{bc2eq5}.
\end{dfn}

\section{B-complex manifolds with g-corners}
\label{bc3}

\subsection{B-complex structures on manifolds with g-corners}
\label{bc31}

We generalize some important definitions in complex geometry, including Definition \ref{bc1def1}, to manifolds with (g-)corners.

\begin{dfn}
\label{bc3def1}
Let $X$ be a manifold of with corners or g-corners, of real dimension $2n$. A {\it b-almost complex structure\/} $J$ on $X$ is a morphism of vector bundles $J:{}^bTX\ra{}^bTX$ such that 
\begin{itemize}
\setlength{\itemsep}{0pt}
\setlength{\parsep}{0pt}
\item[(i)] $J^2=-\id_{{}^bTX}$, and
\item[(ii)] For all $0\le k\le 2n$, we have the depth stratum $S^k(X)\subset X$ as in \S\ref{bc213} and \S\ref{bc225}, and from the exact sequences \eq{bc2eq3} and \eq{bc2eq12} we have a rank $k$ vector subbundle ${}^bi\bigl({}^bN_{S^k(X)}\bigr)\subseteq {}^bTX\vert_{S^k(X)}$. We require that in subbundles of ${}^bTX\vert_{S^k(X)}$ we have the transversality condition
\begin{equation*}
{}^bi\bigl({}^bN_{S^k(X)}\bigr)\cap J\bigl[{}^bi\bigl({}^bN_{S^k(X)}\bigr)\bigr]=0.
\end{equation*}
This is only possible if $S^k(X)=\es$ when $n<k\le 2n$. It holds automatically if $k=0$ or~1.
\end{itemize}
We call $(X,J)$ a {\it b-almost complex manifold}.

The usual proof of the existence of Nijenhuis tensors \cite[\S IX.2]{KoNo} also works for b-(co)tangent bundles. Thus, a b-almost complex structure has a {\it Nijenhuis tensor\/} $N_J\in\Ga^\iy({}^bT^*X\ot {}^bT^*X\ot {}^bTX)$ characterized uniquely by the fact that for all b-vector fields $v,w\in\Ga^\iy({}^bTX)$, we have 
\e
N_J(v,w)=[v,w]+J\bigl([Jv,w]+[v,Jw]\bigr)-[Jv, Jw],
\label{bc3eq1}
\e
where $[\,,\,]$ is the Lie bracket of b-vector fields from \S\ref{bc212} and \S\ref{bc224}. We call $J$ {\it integrable\/} if $N_J=0$. If $J$ is integrable we call it a {\it b-complex structure}, and we call $(X,J)$ a {\it b-complex manifold}. 

Equivalently, the complexification ${}^bTX\ot_\R\C$ has a decomposition
\e
{}^bTX \ot_\R \C =  {}^bT^{1,0}X\op{}^bT^{0,1}X
\label{bc3eq2}
\e
into the $i$-eigenspace ${}^bT^{1,0}X $ and $(-i)$-eigenspace ${}^bT^{0,1}X $ of $J$. Then $J$ is integrable if and only if $\bigl[\Ga^\iy({}^bT^{1,0}X),\Ga^\iy({}^bT^{1,0}X)\bigr]\subseteq \Ga^\iy({}^bT^{1,0}X)\subseteq\Ga^\iy({}^bTX\ot_\R\C)$. Dual to \eq{bc3eq2}, we have a decomposition
\e
{}^bT^*X\ot_\R\C={}^bT^{*\,1,0}X\op{}^bT^{*\,0,1}X,
\label{bc3eq3}
\e
where ${}^bT^{*\,1,0}X$ is the complex vector bundle dual to ${}^bT^{1,0}X$, and ${}^bT^{*\,0,1}X$ is dual to ${}^bT^{0,1}X$. Let $f:X\ra\C$ be smooth, so that ${}^b\d f\in\Ga^\iy({}^bT^*X\ot_\R\C)$. Write ${}^b\d f={}^b\pd f\op{}^b\db f$ in the splitting \eq{bc3eq3}, where ${}^b\pd f$ is the component in ${}^bT^{*\,1,0}X$ and ${}^b\db f$ the component in ${}^bT^{*\,0,1}X$. We call $f$ {\it holomorphic\/} if~${}^b\db f=0$.

As in standard complex geometry, ${}^b\db$ naturally extends to maps
\begin{equation*}
{}^b\db \colon  \Ga^\iy\bigl(\La^r({}^bT^{*\,0,1}X)\bigr)\longra \Ga^\iy\bigl(\La^{r+1}({}^bT^{*\,0,1}X)\bigr),
\end{equation*}
for $r=0,1,\ldots,n-1$, satisfying the graded Leibniz rule:
\begin{equation*} 
{}^b \db (\al \wedge \beta) = {}^b \db \al \wedge \beta + (-1)^k \al \wedge {}^b \db \beta \,, 
\end{equation*}
for $\al\in\Ga^\iy\bigl(\La^k({}^bT^{*\,0,1}X)\bigr)$ and $\be\in\Ga^\iy\bigl(X,\La^l({}^bT^{*\,0,1}X)\bigr)$ for $k,l\ge 0$ with $k+l<n$. Then $J$ is integrable if and only if ${}^b\db ^2={}^b\db\ci{}^b\db=0$.

Let $(X,J_X),(Y,J_Y)$ be b-complex manifolds with g-corners and $f:X\ra Y$ be an interior map. We call $f$ {\it holomorphic\/} if for all $x\in X$ with $y=f(x)\in Y$ , so that ${}^b\d f\vert_x \colon {}^bT_x X \ra {}^bT_y Y$ is a linear map, we have 
\begin{equation*} 
{}^b\d f\vert_x \ci J_X\vert_x  = J_Y\vert_y \ci  {}^b\d f\vert_x. 
\end{equation*}
\end{dfn}

\begin{rem}
\label{bc3rem1}
{\bf(a)} For manifolds with boundary, which are equivalent to manifolds with (g-)corners $X$ with $S^k(X)=\es$ for $k>1$, b-(almost) complex structures were defined and studied by Mendoza \cite{Mend1,Mend2}. See also Melrose \cite[\S 6.3]{Melr2} who discusses Riemann surfaces with boundary.  
\smallskip

\noindent{\bf(b)} Definition \ref{bc3def1}(ii) ensures good behaviour of b-(almost) complex manifolds at their corner strata. We have decided to include it from the beginning, as it seems fundamental to the theory, and is necessary for Theorems \ref{bc3thm2}--\ref{bc3thm3} below. It does not appear in \cite{Mend1,Mend2,Melr2}, as it is trivial for manifolds with boundary.
\end{rem}

Here is an important example.

\begin{ex}
\label{bc3ex1}
Let $P$ be a weakly toric monoid, of rank $n$. Then Example \ref{bc2ex2} defines an $n$-manifold with g-corners $X_P$, where if $P=\N^k\t\Z^{n-k}$ then $X_P\cong\R^n_k=[0,\iy)^k\t\R^{n-k}$ is a manifold with corners. As in \S\ref{bc224}, the b-tangent bundle ${}^bTX_P$ is canonically trivial with fibre~$\Hom(P^\gp,\R)\cong\R^n$.

We will define a b-complex structure on the product $X_P\t\Hom(P^\gp,\R)$, which has real dimension $2n$. Since ${}^bT\bigl(X_P\t\Hom(P^\gp,\R)\bigr)$ is canonically trivial with fibre $\Hom(P^\gp,\R)\op\Hom(P^\gp,\R)$, we define $J$ in matrix form by
\e
J=\begin{pmatrix} 0 && -\id \\ \\ \id && 0 \end{pmatrix} \, : \,\,\begin{matrix} \Hom(P^\gp,\R) \\ \op \\ \Hom(P^\gp,\R)\end{matrix} \, \longra \, \begin{matrix} \Hom(P^\gp,\R) \\ \op \\ \Hom(P^\gp,\R).\!\end{matrix}
\label{bc3eq4}
\e

As in Definition \ref{bc2def12}, the trivialization ${}^bTX_P\cong\Hom(P^\gp,\R)$ gives b-vector fields $v_\al$ on $X_P$ for $\al\in\Hom(P^\gp,\R)$ with $[v_\al,v_\be]=0$ for all $\al,\be\in \Hom(P^\gp,\R)$. Write $w_\al\in\Ga^\iy(T\Hom(P^\gp,\R))$ for the vector field on the vector space $\Hom(P^\gp,\R)$ corresponding to translation in direction $\al\in\Hom(P^\gp,\R)$. Then $[w_\al,w_\be]=0$ for all $\al,\be\in\Hom(P^\gp,\R)$. Write $v_\al',w_\al'$ for the vector fields on $X_P\t\Hom(P^\gp,\R)$ obtained by lifting $v_\al,w_\al$ from the factors $X_P,\Hom(P^\gp,\R)$. Then $[v'_\al,v'_\be]=[v'_\al,w'_\be]=[w'_\al,w'_\be]=0$ for all $\al,\be\in\Hom(P^\gp,\R)$. Equation \eq{bc3eq4} means that
\begin{equation*}
J(v_\al')=w_\al',\quad J(w_\al')=-v_\al'\quad\text{for $\al\in\Hom(P^\gp,\R)$.}
\end{equation*}

In the expression \eq{bc3eq1} for the Nijenhuis tensor $N_J$, we find that $N_J(v,w)=0$ if $v,w$ are $v'_\al,v'_\be,w'_\al$ or $w'_\be$, as all four terms on the right hand side are zero. Hence $N_J=0$, and $J$ is integrable. Thus $\bigl(X_P\t\Hom(P^\gp,\R),J\bigr)$ is a b-complex manifold. We call $J$ the {\it standard b-complex structure\/} on $X_P\t\Hom(P^\gp,\R),$ and also write it as $J_\rst$.

As in Definition \ref{bc2def10} if $p\in P$ we have a smooth function $\la_p:X_P\ra[0,\iy)\subset\R$. Define $\th_p:\Hom(P^\gp,\R)\cong\Hom(P,\R)\ra\R$ by $\th_p(\al)=\al(p)$. Combine these to give a function
\begin{equation*}
\la_pe^{i\th_p}=(\la_p\ci\Pi_{X_P})\cdot e^{i\th_p\ci\Pi_{\Hom(P,\R)}}:X_P\t\Hom(P^\gp,\R)\longra\C,
\end{equation*}
where $\Pi_{X_P},\Pi_{\Hom(P,\R)}$ are the projections to the factors of $X_P\t\Hom(P^\gp,\R)$.

On $X_P$ we have $v_\al(\la_p)=\al(p)\la_p$ for $\al\in\Hom(P^\gp,\R)$ by Definition \ref{bc2def12}, and on $\Hom(P,\R)$ we have $w_\al(e^{i\th_p})=i\al(p)e^{i\th_p}$. Thus lifting to $X_P\t\Hom(P^\gp,\R)$ we see that
\begin{equation*}
v'_\al(\la_pe^{i\th_p})=\al(p)\la_pe^{i\th_p},\quad w'_\al(\la_pe^{i\th_p})=i\al(p)\la_pe^{i\th_p}.
\end{equation*}
Since $J(v'_\al)=w'_\al$ and the $v'_\al,w'_\al$ span ${}^bT\bigl(X_P\t\Hom(P^\gp,\R)\bigr)$, it follows that $\la_pe^{i\th_p}:X_P\t\Hom(P^\gp,\R)\ra\C$ is holomorphic, for all $p\in P$. Note that $\la_pe^{i\th_p}\cdot \la_pe^{i\th_p}=\la_{p+q}e^{i\th_{p+q}}$. Thus, we have constructed an algebra of holomorphic functions on $X_P\t\Hom(P^\gp,\R)$ isomorphic to the monoid algebra $\C[P]$.

Observe that $X_P\t\Hom(P^\gp,\R)\cong\Hom(P,\R_{\geq 0}\t\R)$. We can regard the b-complex structure on $X_P\t\Hom(P^\gp,\R)$ as arising from the b-complex structure on $\R_{\geq 0}\t\R$ for which $(r,\th)\mapsto re^{i\th}$ is a holomorphic function, since $\la_pe^{i\th_p}$ is the composition $\Hom(P,\R_{\geq 0}\t\R)\,{\buildrel-\ci p\over\longra}\,\R_{\geq 0}\t\R\,{\buildrel re^{i\th}\over\longra}\,\C$. Writing $\cS^1=\R/2\pi\Z$ and 
\begin{equation*}
\Hom(P^\gp,\cS^1)=\Hom(P^\gp,\R/2\pi\Z)=\Hom(P^\gp,\R)/2\pi \Hom(P^\gp,\Z),
\end{equation*}
all the above also works for the product $X_P\t\Hom(P^\gp,\cS^1)$, noting that for $p\in P$ (also for $p\in P^\gp$) the function $e^{i\th_p}:\Hom(P^\gp,\R)\ra{\rm U}(1)\subset\C$ descends to $\Hom(P^\gp,\cS^1)$, although the function $\th_p:\Hom(P^\gp,\R)\ra\R$ does not. Then $X_P\t\Hom(P^\gp,\cS^1)\cong\Hom(P,\R_{\geq 0}\t\cS^1)$. The b-complex manifolds $X_P\t\Hom(P^\gp,\cS^1)$ for weakly toric monoids $P$ will be very important in the sequel \cite{ArJo2}, as they are local models for Kato--Nakayama spaces of log smooth log $\C$-schemes and log smooth log complex analytic spaces.
\end{ex}

\subsection{\texorpdfstring{Newlander--Nirenberg type theorems for \\ b-complex manifolds with (g-)corners}{Newlander--Nirenberg type theorems for b-complex manifolds with (g-)corners}}\label{bc32}

Mendoza \cite[Prop.~5.1]{Mend2} proves the following:

\begin{thm}
\label{bc3thm1}
Let $(X,J)$ be a b-complex manifold with boundary of real dimension $2n$ for $n\ge 1,$ and suppose  $x\in\pd X=S^1(X).$ Then there exist local coordinates $(\la,\th,x_2,\ldots,x_n,y_2,\ldots,y_n)\in[0,\iy)\t\R^{2n-1}$ defined on an open neighbourhood\/ $U$ of $x$ in $X,$ where $\la$ takes values in $[0,\iy),$ with\/ $\pd X$ locally defined by $\la=0$ and\/ $\la(x)=0,$ such that
\e
{}^b\db(\la e^{i\th}),{}^b\db(x_2+iy_2),\ldots,{}^b\db(x_n+iy_n)\in I_{\la=0}^\iy\cdot\Ga^\iy({}^bT^{*\,0,1}X\vert_U),
\label{bc3eq5}
\e 
where $I_{\la=0}^\iy$ is the ideal of smooth functions $U\ra\C$ vanishing to infinite order along the boundary hypersurface\/ $\la=0$. On $\pd X\cap U,$ the complex\/ $1$-forms in \eq{bc3eq5} are a basis of sections of\/~${}^bT^{*\,0,1}X\vert_{\pd X\cap U}$.
\end{thm}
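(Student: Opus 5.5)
We prove Theorem \ref{bc3thm1} in two stages: first on the boundary $\pd X$ itself, then by a formal power series argument in $\la$ with Borel summation at the end.

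\textbf{Stage 1: holomorphic data on $\pd X$.} Restrict to $S^1(X)=\pd X$. By \eq{bc2eq3}, ${}^bTX\vert_{\pd X}$ contains the rank one subbundle ${}^bN_{\pd X}$, spanned by the flat section $\nu=\la\frac{\pd}{\pd\la}$ for any boundary defining function $\la$. Put $T=J\nu$. Since ${}^b\pi(\nu)=0$, the vector field $t={}^b\pi(T)$ on $\pd X$ is nowhere vanishing, and ${}^b\pi$ identifies ${}^bTX\vert_{\pd X}/\an{\nu,T}$ with $T\pd X/\an{t}$. This quotient carries an induced complex structure $\bar J$, making $\pd X$ a CR-type manifold with a transverse vector field $t$.

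Integrability $N_J=0$ restricts to $\pd X$ through the Lie algebroid bracket of Definition \ref{bc2def7}. Since $\nu$ is flat, $[\nu,\cdot]_{S^1}=0$. Taking $v=\nu$ in \eq{bc3eq1}, and using that $\nu$ is central, gives $[T,Jw]_{S^1}=J[T,w]_{S^1}$. Hence the flow of $t$ preserves $\bar J$, and the $(1,0)$-distribution on $T\pd X/\an{t}$ is involutive.

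Locally, therefore, $\pd X$ is the product of a line with $\theta$-coordinate and a complex $(n-1)$-manifold, and Theorem \ref{bc1thm1} applies to the leaf space. This produces coordinates $\th,x_2,\ldots,y_n$ on $\pd X$ with $t=\pd/\pd\th$ and $z_j=x_j+iy_j$ holomorphic on the quotient. I will check this carefully, since the bracket of $T$ with $\nu$ must vanish, which again uses $[\nu,\cdot]_{S^1}=0$.

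Now extend $\th,x_j,y_j$ arbitrarily into $X$, and rescale $\la$ by a positive function so that $\la e^{i\th}$ satisfies ${}^b\db(\la e^{i\th})=0$ on $\pd X$. Writing $\la=e^{\phi}\la_0$, this amounts to solving $(\nu+iT)$-type equations for $\phi$ along $\pd X$, which is an ODE/CR problem. This yields \eq{bc3eq5} modulo $I_{\la=0}$, and shows that the forms restrict to a basis of ${}^bT^{*\,0,1}X\vert_{\pd X}$.

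\textbf{Stage 2: inductive improvement in $\la$.} Suppose the functions $f_0=\la e^{i\th}$ and $f_j=z_j$ satisfy ${}^b\db f\in\la^N\Ga^\iy$. Correct them by $f\mapsto f+\la^N g$, or for $f_0$ by $\la e^{i\th}(1+\la^Ng)$. The key identity is that on $\la^N$-terms, ${}^b\db(\la^N g)=\la^N({}^b\db g+N g\,{}^b\db\log\la)$, where ${}^b\db\log\la$ is the $(0,1)$-part of $\la^{-1}\d\la$. This part is nonvanishing on $\pd X$, and in the model it equals $\tfrac12(\la^{-1}\d\la+i\,\d\th)$.

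Integrability gives ${}^b\db^2=0$. So the leading error $\be=\la^{-N}{}^b\db f\vert_{\pd X}$ satisfies a compatibility condition $({}^b\db+N\,{}^b\db\log\la\w)\be=0$ on $\pd X$. We must then solve $({}^b\db+N\,{}^b\db\log\la)g=\be$ on $\pd X$ for $g$. In the coordinates from Stage 1 this is a twisted $\db$-equation: in the $\nu,T$ direction it reads $(N+i\pd_\th)g=\ldots$, and in the $z$-directions it is a $\db$-equation.

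This is the main obstacle. I would first try to solve the $\th$-component by integrating an ODE in $\th$, on a neighbourhood shrinking to $x$. Then, using compatibility, I would reduce the $z$-components to a $\db$-closed form on a polydisc and solve it by the Dolbeault lemma with parameter $\th$. Compatibility should make these two steps consistent, and local solvability with smooth dependence on parameters needs care.

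Finally, Borel's lemma sums the corrections $\sum\la^Ng_N$ to smooth functions. Since the corrections are $O(\la)$, the basis property on $\pd X$ is unaffected, and the resulting functions give \eq{bc3eq5}.
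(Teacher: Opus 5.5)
Your architecture matches the paper's proof of the $k=1$ case of Theorem \ref{bc3thm3}. Stage 1 reproduces the first part of Theorem \ref{bc3thm2}: coordinates with ${}^b\pi(J\nu)=\pd/\pd\th$ and $z_j$ holomorphic. You obtain these via Newlander--Nirenberg on the local leaf space of $t$ rather than Nirenberg's complex Frobenius theorem, which is fine. Stage 2 with Borel is Proposition \ref{bc3prop3} and Corollary \ref{bc3cor1} with $Q=\N$.

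\textbf{The gap is the leading-order normalization at the end of Stage 1.} First, the condition you aim for is vacuous: ${}^b\db(\la e^{i\th})=\la e^{i\th}\,{}^b\db\log(\la e^{i\th})$ vanishes on $\pd X$ for every $\la,\th$. What is needed is ${}^b\db\log(\la e^{i\th})\vert_{\pd X}=0$, as in \eq{bc3eq26} and \eq{bc3eq29}. Likewise your inductive hypothesis for $f_0$ must be on ${}^b\db\log f_0$, since $\la e^{i\th}(1+\la^Ng)$ alters ${}^b\db f_0$ only at order $\la^{N+1}$.

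Second, and this is the real problem, for $n\ge 2$ that condition generally cannot be reached by $\la=e^\phi\la_0$ with $\phi$ real and $\th$ fixed. The $z$-components prescribe $\pd\phi/\pd\bar z_j$, hence (as $\phi$ is real) all of $\d\phi$. But ${}^b\db^2=0$ does not make the prescribed real 1-form closed. Concretely, in the standard model $[0,\iy)\t\R\t\C^{n-1}$, replace $\th$ on $\pd X$ by $\th'=\th+|z_2|^2$. This is a legitimate output of your Stage 1: $t=\pd/\pd\th'$ in the coordinates $(\th',z)$, and the $z_j$ are still holomorphic on the leaf space. Yet ${}^b\db\log(e^\phi\la e^{i\th'})\vert_{\pd X}={}^b\db_{S^1}(\phi+i|z_2|^2)$. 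This vanishes only if $\phi+i|z_2|^2$ is independent of $\th$ and holomorphic in $z$, which is impossible since $|z_2|^2$ is not pluriharmonic.

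\textbf{The fix.} You must correct $\th$ by a real function of $z$ at the same time as you rescale $\la$. Solve ${}^b\db_{S^1}h=-{}^b\db\log(\la_0e^{i\th})\vert_{\pd X}$ for a complex function $h=\phi+i\psi$ on $\pd X$. The right hand side is ${}^b\db_{S^1}$-closed because ${}^b\db^2=0$. So this is the degree-one ${}^b\db_{S^1}$-Poincar\'e Lemma (Proposition \ref{bc3prop2}), which your own ODE-in-$\th$ plus Dolbeault method proves.

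The $(\nu+iT)$-component of this equation forces $t(\psi)=0$ and $t(\phi)=-({}^b\d\log\la_0)(T)$. So $\th+\psi$ is still an angular coordinate with $t=\pd/\pd(\th+\psi)$, and you replace $\la_0,\th$ by $e^\phi\la_0,\th+\psi$. In the paper this freedom is the $\pd\db$-Lemma correction $\hat\th_c=\th_c-\ha f_c$ at the end of the proof of Theorem \ref{bc3thm2}. That correction makes the lifted frame commute, so the dual forms $\om^\sk_a$ are ${}^b\d_{S^k}$-closed, and then \eq{bc3eq25}--\eq{bc3eq26} can be arranged by a real rescaling alone.

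\textbf{Stage 2.} Once Stage 1 is repaired, your ``main obstacle'' in Stage 2 disappears. Correct by $(\la e^{i\th})^Ng$ instead of $\la^Ng$, as the paper does with $g_j^q\mu^\star_qe^{i\th^\star_q}$. Equivalently, on $\pd X$ one has ${}^b\db+N\,{}^b\db\log\la=e^{iN\th}\ci{}^b\db_{S^1}\ci e^{-iN\th}$. So the twisted equation becomes an untwisted ${}^b\db_{S^1}$-equation with closed right hand side, solved again by Proposition \ref{bc3prop2}.
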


Here $\la e^{i\th},x_2+iy_2,\ldots,x_n+iy_n$ may not be holomorphic, but they are at least `holomorphic to infinite order at $\pd X$'. We can regard Theorem \ref{bc3thm1} as a {\it formal Newlander--Nirenberg Theorem}, as it basically says that the {\it formal completion\/} of $X$ along $\pd X=S^1(X)$ is locally standard.

\begin{rem}
\label{bc3rem2}
Barron--Francis \cite[Th.~1.2]{BaFr} claim a stronger result, that one can choose coordinates with ${}^b\db(\la e^{i\th})={}^b\db(x_2+iy_2)=\cdots={}^b\db(x_n+iy_n)=0$, that is, $\la e^{i\th},\ldots,x_n+iy_n$ are holomorphic functions on $U$. Unfortunately there is a mistake in the proof on \cite[p.~12]{BaFr} (in arXiv version 1, 2023), in the sentence `\dots there is no harm in assuming the deformation terms to be $\ti\si$-invariant'. Barron and Francis have told the authors they hope to post a corrected version.
\end{rem}

Here is our first main result, proved in \S\ref{bc33}. It shows that on each depth stratum $S^k(X)$ of a b-complex manifold with (g-)corners $X$, the action of $J$ on the Lie algebroid ${}^bTX\vert_{S^k(X)}$ has a standard local model.

\begin{thm}
\label{bc3thm2}
Let\/ $(X,J)$ be a b-complex manifold with corners or g-corners of real dimension $2n,$ and let\/ $x\in S^k(X)$ for\/ $0\le k\le n$. Then we can choose an open neighbourhood\/ $V$ of\/ $x$ in $S^k(X),$ global coordinates $(\th_1,\ldots,\th_k,x_{k+1},\ab\ldots,\ab x_n,\ab y_{k+1},\ab\ldots,y_n)$ on $V,$ and a basis of sections\/ $v_1,\ldots,v_k,\frac{\ti\pd}{\pd\th_1},\ldots,\frac{\ti\pd}{\pd\th_k},\ab\frac{\ti\pd}{\pd x_{k+1}},\ab\ldots,\ab\frac{\ti\pd}{\pd x_n},\ab\frac{\ti\pd}{\pd y_{k+1}},\ab\ldots,\frac{\ti\pd}{\pd y_n}$ of\/ ${}^bTX\vert_V,$ satisfying
\begin{itemize}
\setlength{\itemsep}{0pt}
\setlength{\parsep}{0pt}
\item[{\bf(i)}] Using the notation of\/ \eq{bc2eq3} and\/ {\rm\eq{bc2eq12},} $v_1,\ldots,v_k$ are a basis of sections of\/ ${}^bN_{S^k(X)}\vert_V,$ and are constant under the natural flat connection $\nabla^{{}^bN_{S^k(X)}}$. 
\item[{\bf(ii)}] All elements of the basis of sections $v_1,\ldots,\frac{\ti\pd}{\pd y_n}$ commute with each other under the Lie bracket\/ $[\,,\,]_{S^k}$ in {\rm\S\ref{bc214}} and\/~{\rm\S\ref{bc226}}.
\item[{\bf(iii)}] The morphisms ${}^b\pi$ in \eq{bc2eq3} and\/ \eq{bc2eq12} map $\frac{\ti\pd}{\pd\th_i}\mapsto\frac{\pd}{\pd\th_i},$ $\frac{\ti\pd}{\pd x_j}\mapsto\frac{\pd}{\pd x_j}$ and\/ $\frac{\ti\pd}{\pd y_j}\mapsto\frac{\pd}{\pd y_j}$ for all\/~$i,j$.
\item[{\bf(iv)}] The restriction of\/ $J$ to ${}^bTX\vert_V$ satisfies for all\/ $1\le i\le k$ and\/~$k<j\le n$
\end{itemize}
\e
J(v_i)=\frac{\ti\pd}{\pd\th_i},\quad  J\Bigl(\frac{\ti\pd}{\pd\th_i}\Bigr)=-v_i,\quad
J\Bigl(\frac{\ti\pd}{\pd x_j}\Bigr)=\frac{\ti\pd}{\pd y_j}, \quad
J\Bigl(\frac{\ti\pd}{\pd y_j}\Bigr)=-\frac{\ti\pd}{\pd x_j}. 
\label{bc3eq6}
\e

Write\/ $z_j=x_j+iy_j:V\ra\C$ for\/ $j=k+1,\ldots,n$. Then under the map
\begin{equation*}
{}^b\pi^*:T^*V\ot_\R\C\longra {}^bT^*X\vert_V\ot_\R\C,
\end{equation*}
we have ${}^b\pi^*({}^b\d z_j)\in\Ga^\iy\bigl({}^bT^{*\,1,0}X\vert_V\bigr)\subset\Ga^\iy\bigl({}^bT^*X\vert_V\ot_\R\C\bigr)$. In effect this means that\/ $z_{k+1},\ldots,z_n$ are holomorphic functions on $V\subset S^k(X)$. We can regard\/ $(\th_1,\ldots,\th_k,z_{k+1},\ldots,z_n)$ as coordinates on $V\subset S^k(X)$ in\/~$\R^k\t\C^{n-k}$.
\end{thm}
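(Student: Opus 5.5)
The plan is to work purely on the stratum $S^k(X)$ with the Lie algebroid $A={}^bTX\vert_{S^k(X)}$ of Definitions \ref{bc2def7} and \ref{bc2def16}, restricted to a small neighbourhood of $x$.

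\emph{Normal directions.} By Definition \ref{bc2def14}, near $x$ we have a trivialization ${}^bN_{S^k(X)}\cong\Hom(P_x^\gp,\R)$ compatible with the flat connection. Choose a basis $v_1,\ldots,v_k$ of flat sections. These commute with everything in the sense $[v_i,s]_{S^k}=0$ for all sections $s$ of $A$ (Definition \ref{bc2def7}), and ${}^b\pi(v_i)=0$. Put $u_i=J(v_i)$. By Definition \ref{bc3def1}(ii), $v_1,\ldots,v_k,u_1,\ldots,u_k$ are linearly independent, so $w_i={}^b\pi(u_i)$ are linearly independent vector fields on $S^k(X)$.

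\emph{Integrability of the commuting frame.} Expand $N_J(v_i,v_j)=0$. Since $[v_i,\cdot]_{S^k}=0$, and the restriction of $N_J$ to $S^k(X)$ can be computed with the bracket $[\,,\,]_{S^k}$ (extend, bracket, restrict), this gives $[Jv_i,Jv_j]_{S^k}=0$, that is $[u_i,u_j]_{S^k}=0$. Applying the anchor gives $[w_i,w_j]=0$. Similarly, for any section $s$, $N_J(v_i,s)=0$ gives
\begin{equation*}
J[u_i,s]=[u_i,Js],
\end{equation*}
so $\mathcal L_{u_i}:=[u_i,\cdot]_{S^k}$ commutes with $J$.

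\emph{Transverse holomorphic structure.} Let $\mathcal F$ be the foliation of $V$ spanned by the commuting fields $w_i$. Choose coordinates $(\th_1,\ldots,\th_k,t)$ with $w_i=\pd/\pd\th_i$. The quotient $E=A/\an{v_i,u_i}$ has rank $2n-2k$ and is isomorphic to $T(S^k(X))/T\mathcal F$ via ${}^b\pi$. It inherits a complex structure, since $\an{v_i,u_i}$ is $J$-invariant. It is invariant under the $\mathcal L_{u_i}$ flows, so it descends to an almost complex structure on the local leaf space $W$, of real dimension $2n-2k$. Integrability of this structure follows from $N_J=0$, because brackets of sections of $A$ project to brackets on $W$. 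The classical Theorem \ref{bc1thm1} then gives holomorphic coordinates $z_j=x_j+iy_j$ on $W$, which we pull back to $V$. This yields ${}^b\pi^*({}^b\d z_j)\in{}^bT^{*\,1,0}$ on the $E$-part; the $v,u$ directions need separate checking, done below.

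\emph{Constructing the lifts.} Define $\frac{\ti\pd}{\pd\th_i}=u_i$, which is a lift of $\pd/\pd\th_i$. For $\pd/\pd x_j$ we need lifts $\xi_j$, with $\frac{\ti\pd}{\pd y_j}=J\xi_j$, such that:
\begin{itemize}
\item $J\xi_j$ lifts $\pd/\pd y_j$;
\item the whole frame commutes.
\end{itemize}
Start with any lift $\xi_j^0$ whose image in $E$ is $\pd/\pd x_j$. The projected condition holds modulo $\an{v,u}$, so corrections are combinations $a\,v+b\,u$ with real coefficient functions. These must satisfy a system of linear first-order PDEs expressing $[u_i,\xi_j]=0$, $[\xi_j,\xi_l]=0$ and $[\xi_j,J\xi_l]=0$. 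Modulo $J$-compatibility the $v$-coefficient and $u$-coefficient pair into a complex function $a+ib$.

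Because $v_i$ is central, the $v$-components of brackets obey a $\db$-type equation on $W$ along the $\th$-directions. The relevant equation is $\pd_{\th}f=\cdots$ together with $\pd_{\bar z}f=\cdots$. Its compatibility conditions are exactly the components of $N_J=0$ and the Jacobi identity, so it is solvable locally, using $\db$-Poincaré lemma on polydiscs with $\th$-parameters.

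\emph{Expected obstacle.} This last step is the main difficulty: organizing the correction equations so that they form an integrable $\db$-system with parameters, and verifying that $N_J=0$ gives exactly the needed compatibility. Once the frame is built, ${}^b\pi^*({}^b\d z_j)$ annihilates $v_i+iu_i$ and $\xi_l+iJ\xi_l$, hence lies in ${}^bT^{*\,1,0}X\vert_V$.
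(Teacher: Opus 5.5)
Your steps up to the construction of coordinates are sound, and they are a legitimate variant of the paper's argument. The paper applies Nirenberg's complex Frobenius theorem to $\mathcal W={}^b\pi({}^bT^{1,0}X|_V)$. You instead straighten the commuting fields $w_i={}^b\pi(Jv_i)$ and apply the classical Newlander--Nirenberg theorem to the induced transverse complex structure on the local leaf space. This gives $\partial/\partial\theta_i=w_i$ directly, which even skips the paper's extra p.d.e.\ for $\theta_i$.

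The gap is the last step. That is where the real content of (ii) lies, and you only assert it is solvable.

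Once coordinates are fixed, (iii), (iv) and $\tilde\partial/\partial\theta_i=Jv_i$ determine the lifts uniquely: $Z_j=\tilde\partial/\partial x_j-i\,\tilde\partial/\partial y_j$ must be the unique section of ${}^bT^{1,0}X|_V$ over $\partial/\partial x_j-i\,\partial/\partial y_j$. For these lifts almost every bracket vanishes automatically:
\begin{itemize}
\item brackets with $Jv_i$ vanish by $N_J(v_i,\cdot)=0$ together with $\langle v\rangle\cap J\langle v\rangle=0$;
\item $[Z_a,Z_b]=0$, because it lies in ${}^bT^{1,0}X|_V\cap\langle v\rangle_{\mathbb{C}}=0$.
\end{itemize}
The exception is the mixed brackets $[Z_a,\bar Z_b]=\sum_c\omega^c_{ab}v_c$, which have no reason to vanish. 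So there is nothing left to solve for in the lifts themselves. Your corrections with nonzero $u$-components move ${}^b\pi(\xi_j)$ off $\partial/\partial x_j$, so they are disguised changes of the $\theta$-coordinates. The real question is whether some change $\theta_c\mapsto\theta_c-\frac12 f_c(z)$ kills $\omega^c_{ab}$.

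Answering that needs three ingredients you do not supply.
\begin{itemize}
\item[(a)] $\omega^c_{ab}$ is independent of $\theta$; this follows by bracketing with $Jv_d$ and using the Jacobi identity.
\item[(b)] $\omega^c=\sum_{a,b}\omega^c_{ab}\,dz_a\wedge d\bar z_b$ is a real $(1,1)$-form, since $\overline{\omega^c_{ab}}=-\omega^c_{ba}$. It is also $d$-closed, which follows from the Jacobi identity for triples $Z_a,Z_b,\bar Z_d$ and their conjugates.
\item[(c)] By the local $\partial\bar\partial$-lemma, $\omega^c=i\partial\bar\partial f_c$ with $f_c$ real, and $\theta_c\mapsto\theta_c-\frac12 f_c$ removes it. This is what the paper does.
\end{itemize}

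In your own formulation, the equations $[u_i,\xi_j]=0$ force the corrections to be $\theta$-independent. The unknown is then a $(1,0)$-form $\Phi^c=\sum_a(a^c_a+ib^c_a)\,dz_a$ on the leaf space. The conditions $[\xi_a,\xi_b]=[\xi_a,J\xi_b]=0$ amount to $\partial\Phi^c=0$ together with $\bar\partial\Phi^c$ equal to a fixed nonzero multiple of $\omega^c$. That is a $d$-equation, not a pure $\bar\partial$-system, so a $\bar\partial$-Poincar\'e lemma with parameters does not by itself solve it. Its solvability is exactly $d\omega^c=0$ plus the Poincar\'e and Dolbeault lemmas, equivalently the $\partial\bar\partial$-lemma. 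The solution is $\Phi^c$ proportional to $\partial f_c$; this gradient form is why the $u$-parts integrate to a genuine coordinate change and (iii) survives. Without these points, (ii), and hence the theorem, is not established.
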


Here is our second main result, proved in \S\ref{bc34}. It generalizes Theorem \ref{bc3thm1} above from Mendoza \cite{Mend2} to b-complex manifolds with (g-)corners. 

\begin{thm}
\label{bc3thm3}
Let\/ $(X,J)$ be a b-complex manifold with corners or g-corners of real dimension $2n,$ and let\/ $x\in S^k(X)$ for\/ $0\le k\le n$. Define $Q=(M_{S^k(X)}\vert_x)^\vee$ for $M_{S^k(X)}\subset{}^bN_{S^k(X)}$ as in {\rm\S\ref{bc225},} so that\/ $Q$ is a toric monoid of rank\/ $k,$ and set\/ $P=Q\t\Z^{n-k}$. If\/ $X$ is a manifold with corners then $Q\cong\N^k$ and\/ $X_Q\cong[0,\iy)^k$. Example\/ {\rm\ref{bc3ex1}} defines a standard b-complex manifold
\e
\bigl(X_P\t\Hom(P^\gp,\R),J_\rst\bigr)=\bigl((X_Q\t\R^{n-k})\t(\Hom(Q^\gp,\R)\t\R^{n-k}),J_\rst\bigr),
\label{bc3eq7}
\e
with holomorphic functions $\la_pe^{i\th_p}:X_P\t\Hom(P^\gp,\R)\ra\C$ for all\/ $p\in P$. As in\/ {\rm\S\ref{bc222},} since $Q$ is toric we have the vertex $\de_0\in X_Q,$ and
\e
S^k\bigl(X_P\t\Hom(P^\gp,\R)\bigr)=(\{\de_0\}\t\R^{n-k})\t(\Hom(Q^\gp,\R)\t\R^{n-k}).
\label{bc3eq8}
\e

Then there exists a g-chart\/ $(P,U,\phi)$ on $X$ in the sense of\/ {\rm\S\ref{bc223},} such that for all\/ $p\in P,$ the smooth function $\phi_*(\la_pe^{i\th_p}):\phi(U)\ra\C$ defined on the open subset\/ $\phi(U)\subset X$ has
\e
{}^b\db\bigl(\phi_*(\la_pe^{i\th_p})\bigr)\in I_{S^k(X)}^\iy\cdot\Ga^\iy\bigl({}^bT^{*\,0,1}X\vert_{\phi(U)}\bigr),
\label{bc3eq9}
\e
where $I_{S^k(X)}^\iy$ is the ideal of smooth functions $\phi(U)\ra\C$ vanishing to infinite order along\/ $S^k(X)\cap\phi(U)$. So $\phi_*(\la_pe^{i\th_p})$ may not be holomorphic on $(\phi(U),J),$ but it is at least `holomorphic to infinite order at\/~$S^k(X)\cap\phi(U)$'. 

This means that the b-complex structures $J\vert_{\phi(U)}$ and\/ $\phi_*(J_\rst)$ on $\phi(U)\subset X$ agree to infinite order along $S^k(X)\cap\phi(U)$. 
\end{thm}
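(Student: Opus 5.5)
We construct the chart in two stages: first a chart on $X$ that is standard to first order along $S^k(X)$, supplied by Theorem \ref{bc3thm2}; then an order-by-order correction in the normal directions, using integrability $N_J=0$ to solve the resulting linear equations, followed by Borel summation.

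\emph{Step 1 (zeroth order).} Apply Theorem \ref{bc3thm2} at $x$. This gives coordinates $(\th_1,\ldots,\th_k,x_{k+1},\ldots,y_n)$ on $V\subset S^k(X)$ and a commuting frame of ${}^bTX\vert_V$ on which $J$ takes the standard form \eq{bc3eq6}.

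We take the $v_i$ to be flat sections of ${}^bN_{S^k(X)}$ forming a basis of $M_{S^k(X)}\vert_x\ot_\N\R=\Hom(Q^\gp,\R)$. This identifies $V$ with an open subset of $\Hom(Q^\gp,\R)\t\R^{n-k}\t\R^{n-k}$. In these coordinates the $\frac{\ti\pd}{\pd\th_i}$ correspond to the $\Hom(Q^\gp,\R)$-factor of $\Hom(P^\gp,\R)$, and the $v_i$ to the $X_Q$-factor $\Hom(Q^\gp,\R)\subset{}^bTX_Q$.

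Proposition \ref{bc2prop2} gives a g-chart $(P',U',\phi')$ near $x$ with $P'\cong Q\t\Z^{2n-k}$. We rewrite $\Z^{2n-k}$ as $\Z^{n-k}\t\Hom(Q^\gp,\Z)^{\vee}$-type coordinates to reach the product form $X_Q\t\R^{n-k}\t\Hom(Q^\gp,\R)\t\R^{n-k}$. We then modify $\phi'$ by a diffeomorphism that is the identity on $S^k$ and adjusts the first-order normal data. Concretely, we reparametrize the functions $\la_q$, $q\in Q$, by positive smooth factors and shift the tangential coordinates, so that the chart's constant frame agrees with Theorem \ref{bc3thm2}'s frame along $S^k(X)$.

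After Step 1, $J-\phi_*(J_\rst)$ vanishes on $S^k(X)\cap\phi(U)$. The commutation relations in Theorem \ref{bc3thm2}(ii) and flatness in (i) are exactly what make the Lie algebroid structures match, so that this identification can be realized by an honest chart.

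\emph{Step 2 (induction on order).} Say $J$ agrees with $\phi_*(J_\rst)$ to order $N$ along $S^k(X)$. On $X_Q$ the relevant filtration is by the ideal generated by the $\la_q$, $q\in Q\sm\{0\}$. Smooth functions on $X_Q$ near $\de_0$ expand formally as sums $\sum_{q}c_q\la_q$, the analogue of Taylor series in the monomials of Proposition \ref{bc2prop1}.

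The order-$N$ error is represented by $\sum_q \la_q\,A_q$, where $A_q$ is a section over $V$ of ${}^bT^{*\,0,1}\ot{}^bT^{1,0}$ and $q$ ranges over elements of "degree" $N$. We seek a correcting b-vector field $\xi=\sum_q\la_q\xi_q$ whose flow, a diffeomorphism equal to the identity to order $N$, kills this term.

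Integrability $N_J=0$ at order $N$ says that $\la_qA_q$ is ${}^b\db$-closed for the standard structure. The obstruction for each $q$ then lies in a Dolbeault-type cohomology on $V\t$(normal directions). A monomial $\la_qe^{i\th_q}$ is an eigenfunction of the normal operators, $v_\al\mapsto\al(q)$. So for $q\ne0$, the normal component of ${}^b\db$ acts on the $q$-summand by multiplication by the nonzero covector $\bar\pd$-dual of $q$. The equation is therefore algebraically solvable: the Koszul complex for a nonzero covector is exact.

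This is where transversality, Definition \ref{bc3def1}(ii), enters. It guarantees that the normal part of ${}^bT^{0,1}$ is the full complexified $\Hom(Q^\gp,\C)$, so the multiplication is by an injective covector and no tangential $\db$-problem on $V$ needs to be solved. This is the key point and the main obstacle. Closedness must be shown to reduce exactly to the Koszul cocycle condition, the homotopy must be handled as $\th$-dependence is present, and the terms with $q$ in the boundary rays of the monoid must be controlled. One also needs $\xi$ to be a genuine b-vector field respecting the g-corner structure; since $\la_q$ with $q\ne0$ vanishes on $S^k$, this holds.

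\emph{Step 3 (summation).} Compose the order-$N$ corrections formally. Then use a Borel lemma on $X_Q\t\R^{2n-k}$, applied via Proposition \ref{bc2prop1}(b) to extend to $[0,\iy)^m$, to realize the infinite composition as a single chart $(P,U,\phi)$ with $J-\phi_*(J_\rst)\in I^\iy_{S^k(X)}$. Equivalently, \eq{bc3eq9} holds, since $\la_pe^{i\th_p}$ is $J_\rst$-holomorphic.
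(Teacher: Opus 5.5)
\textbf{The gap is in Step 2, at the point you yourself call the key one.} The order-$N$ correction equation is not algebraic, and no Koszul argument solves it.

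Transversality, Definition \ref{bc3def1}(ii), works against you here. It gives ${}^bT^{0,1}X\cap({}^bN_{S^k(X)}\ot\C)=0$. By Theorem \ref{bc3thm2}, $J$ sends each normal b-direction $v_a$ to $\frac{\ti\pd}{\pd\th_a}$, whose anchor image $\frac{\pd}{\pd\th_a}$ is a nonzero vector tangent to $S^k(X)$. So along $V$, ${}^bT^{0,1}$ is spanned by $\ha(v_a+i\frac{\ti\pd}{\pd\th_a})$ and $\ha(\frac{\ti\pd}{\pd x_j}+i\frac{\ti\pd}{\pd y_j})$. On a term $\la_qf$ with $f$ a function on $V$, these act, modulo higher order, by $\ha\la_q\bigl(\bar v_a(q)f+i\frac{\pd f}{\pd\th_a}\bigr)$ and by $\la_q\frac{\pd f}{\pd\bar z_j}$.

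Two consequences follow.
\begin{itemize}
\item The eigenvalue $\bar v_a(q)$ always comes with a tangential derivative of the same order in $\la$, and in the $\bar z_j$-directions there is no zeroth-order term at all.
\item With the holomorphic weight $\la_qe^{i\th_q}$ the eigenvalue cancels entirely. The operator on coefficients is then exactly the tangential operator ${}^b\db_{S^k}$ of \eq{bc3eq38}, independent of $q$. So $q\ne0$ plays no role in solvability.
\end{itemize}
Closedness of the error also does not reduce to a Koszul cocycle condition. For example, ${}^b\db(\la_qe^{i\th_q}g)$, with $g$ a non-holomorphic function of $z_{k+1},\ldots,z_n$, is a closed error with only $\ti\d\bar z_j$-components ($j>k$). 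These are not multiples of any covector built from $q$.

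A concrete failure already occurs for $k=n=1$, with coordinates $(\la,\th)$ on $[0,\iy)\t\R$. Removing an order-$N$ error with coefficient $a(\th)$ via $\xi^{1,0}=\la^Nb(\th)\cdot\ha(\la\frac{\pd}{\pd\la}-i\frac{\pd}{\pd\th})$ leads to the ODE $Nb+i\frac{\d b}{\d\th}=c\,a$, with $c$ a nonzero constant. The algebraic choice $b=ca/N$ fails unless $a$ is constant.

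\textbf{What is missing} is the Poincar\'e lemma for ${}^b\db_{S^k}$ in degree one on the contractible $V$: every ${}^b\db_{S^k}$-closed $(0,1)$-form on $V$ is ${}^b\db_{S^k}$-exact. In your vector-field formulation you would apply it componentwise in a commuting frame of ${}^bT^{1,0}$. This is Proposition \ref{bc3prop2} in the paper, taken from Tr\`eves. It is a genuine analytic input: in the $z$-directions it contains the Dolbeault--Grothendieck lemma with parameters $\th$.

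With it your scheme can be repaired:
\begin{itemize}
\item Lemma \ref{bc3lem3} identifies the order-$N$ graded piece with $\bigop_{q\in Q_N\sm Q_{N+1}}C^\iy(V,\C)\mu^\star_q$.
\item Integrability makes each coefficient ${}^b\db_{S^k}$-closed.
\item The Poincar\'e lemma then produces the correction.
\end{itemize}
The paper runs this on functions rather than vector fields. It corrects $\log(\mu^\star_qe^{i\th^\star_q})$ and $z^\star_j$ additively (Proposition \ref{bc3prop3}). Your Step 3 then becomes a single application of the Borel lemma, Proposition \ref{bc3prop1}, instead of an infinite composition of flows.

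Your Step 1 is sound. The rescaling of the $\la_q$ by positive factors that you describe is the alignment the paper uses in \eq{bc3eq25}.
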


In fact Theorem \ref{bc3thm2} follows from Theorem \ref{bc3thm3}, but we prove Theorem \ref{bc3thm2} first as we use it in the proof of Theorem \ref{bc3thm3}.

\subsection{Proof of Theorem \ref{bc3thm2}}
\label{bc33}

Work in the situation of  Theorem \ref{bc3thm2}. Consider the complexification of~\eq{bc2eq12}:
\begin{equation}
\xymatrix@C=12pt{ 0 \ar[r] & {}^bN_{S^k(X)} \!\ot\! \C \ar[rr]^(0.45){{}^bi^{\C}} && {}^bTX\vert_{S^k(X)}\! \ot\! \C \ar[rr]^(0.5){{}^b\pi^{\C}} && T(S^k(X)) \!\ot\! \C \ar[r] & 0. }
\label{bc3eq10}
\end{equation}
As in \eq{bc3eq2}, we have a splitting
\e
{}^bTX\vert_{S^k(X)} \ot \C =  {}^bT^{1,0}X\vert_{S^k(X)} \op   {}^bT^{0,1}X\vert_{S^k(X)}
\label{bc3eq11}
\e
into the $\pm i$-eigenspaces of $J$. Let 
\e
\mathcal{W} = {}^b\pi^{\C}( {}^bT^{1,0}X\vert_{S^k(X)}) \quad\text{and}\quad
\mathcal{\overline{W}} = {}^b\pi^{\C}( {}^bT^{0,1}X\vert_{S^k(X)}).
\label{bc3eq12}
\e

Now Definition \ref{bc3def1}(ii) implies that
\begin{equation*}
\Ker({}^b\pi^{\C})\cap \bigl({}^bT^{1,0}X\vert_{S^k(X)}\bigr)=\Ker({}^b\pi^{\C})\cap \bigl({}^bT^{0,1}X\vert_{S^k(X)}\bigr)=0.
\end{equation*}
Thus ${}^b\pi^{\C}:{}^bT^{1,0}X\vert_{S^k(X)}\ra\mathcal{W}$ and ${}^b\pi^{\C}:{}^bT^{0,1}X\vert_{S^k(X)}\ra\mathcal{\overline{W}}$ are isomorphisms. Hence $\mathcal{W},\mathcal{\overline{W}}$ in \eq{bc3eq12} are complex vector subbundles of $TS^k(X)\ot\C$ of rank $n$, which are complex conjugate as ${}^bT^{1,0}X,{}^bT^{0,1}X$ are. From \eq{bc3eq11} we see that
\e
\mathcal{W} + \mathcal{\overline{W}} = TS^k(X) \ot \C .
\label{bc3eq13}
\e

As $TS^k(X)\ot\C$ has rank $2n-k$, we see that $\mathcal{W} \cap \mathcal{\overline{W}}$ is a complex vector subbundle of $TS^k(X)\ot\C$ of rank $k$. We claim that
\e
\mathcal{W} \cap \overline{\mathcal{W}} = {}^b\pi \ci J\vert_{S^k(X)} \ci {}^bi\bigl({}^bN_{S^k(X)}\ot\C\bigr). 
\label{bc3eq14}
\e
To see this, note that if $x'\in S^k(X)$ and $v\in {}^bi\bigl({}^bN_{S^k(X)}\ot\C\bigr)\vert_{x'}$ then
\begin{equation*}
iv+Jv\in {}^bT_{x'}^{1,0}X\vert_{S^k(X)},\quad -iv+Jv\in {}^bT_{x'}^{0,1}X\vert_{S^k(X)}.
\end{equation*}
Applying ${}^b\pi$ and noting that ${}^b\pi(\pm iv)=0$ as \eq{bc3eq10} is exact we see that ${}^b\pi(Jv)\in\mathcal{W}\vert_{x'}$ and ${}^b\pi(Jv)\in\mathcal{\overline{W}}\vert_{x'}$, so ${}^b\pi(Jv)\in(\mathcal{W} \cap \overline{\mathcal{W}})\vert_{x'}$. Thus, the left hand side of \eq{bc3eq14} contains the right hand side. But both are complex vector bundles of rank $k$, from above for $\mathcal{W} \cap \overline{\mathcal{W}}$ and by Definition \ref{bc3def1}(ii), so they must be equal.

\begin{lem}
\label{bc3lem1}
The complex vector subbundles $\mathcal{W},\mathcal{\overline{W}}$ and $\mathcal{W} \cap \mathcal{\overline{W}}$ are \begin{bfseries}involutive\end{bfseries} subbundles of\/ $TS^k(X) \ot \C,$ that is, 
\begin{gather*}
[\Ga^\iy(\mathcal{W}),\Ga^\iy(\mathcal{W})]\subseteq \Ga^\iy(\mathcal{W}),\quad 
[\Ga^\iy(\overline{\mathcal{W}}), \Ga^\iy(\overline{\mathcal{W}})]\subseteq \Ga^\iy(\overline{\mathcal{W}}), \\ 
\text{and}\quad [\Ga^\iy(\mathcal{W} \cap \overline{\mathcal{W}}), \Ga^\iy(\mathcal{W} \cap \overline{\mathcal{W}})]\subseteq \Ga^\iy(\mathcal{W} \cap \overline{\mathcal{W}}).
\end{gather*}
\end{lem}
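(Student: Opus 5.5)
Integrability of $J$ will be transported to $S^k(X)$ through the Lie algebroid ${}^bTX\vert_{S^k(X)}$ of Definitions \ref{bc2def7} and \ref{bc2def16}. The key facts are two. First, ${}^b\pi$ is the anchor map, so by Definition \ref{bc2def6}(i), after complexifying, it preserves brackets: ${}^b\pi^\C([s,s']_{S^k})=[{}^b\pi^\C s,{}^b\pi^\C s']$. Second, since $N_J=0$, the bundle ${}^bT^{1,0}X$ is closed under the bracket of b-vector fields on $X$. This closure passes to the restricted bracket $[\,,\,]_{S^k}$. Indeed, if $s,s'\in\Ga^\iy({}^bT^{1,0}X\vert_{S^k(X)})$, we extend them locally to sections $\ti s,\ti s'$ of ${}^bT^{1,0}X$ near $S^k(X)$. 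We can do this by extending arbitrarily and then projecting with $\frac12(1-iJ)$. Then $[s,s']_{S^k}=[\ti s,\ti s']\vert_{S^k(X)}$ lies in ${}^bT^{1,0}X\vert_{S^k(X)}$. The extension exists locally, and well-definedness of $[\,,\,]_{S^k}$ (asserted in the paper) lets us patch using partitions of unity, or simply work locally.

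Given $w,w'\in\Ga^\iy(\mathcal W)$, I first use that ${}^b\pi^\C:{}^bT^{1,0}X\vert_{S^k(X)}\ra\mathcal W$ is a bundle isomorphism, as shown above. This lets us lift $w,w'$ uniquely to smooth sections $s,s'$ of ${}^bT^{1,0}X\vert_{S^k(X)}$. By the anchor property, $[w,w']={}^b\pi^\C([s,s']_{S^k})$. By the previous paragraph $[s,s']_{S^k}$ lies in ${}^bT^{1,0}X\vert_{S^k(X)}$, so $[w,w']\in\Ga^\iy(\mathcal W)$. Involutivity of $\overline{\mathcal W}$ follows by complex conjugation, because the bracket of real vector fields extended $\C$-bilinearly commutes with conjugation. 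Alternatively, one can repeat the argument with ${}^bT^{0,1}X$.

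For $\mathcal W\cap\overline{\mathcal W}$, take sections $w,w'$ of the intersection. They are sections of $\mathcal W$ and of $\overline{\mathcal W}$, so by the two cases just proved $[w,w']$ lies in both $\mathcal W$ and $\overline{\mathcal W}$, hence in the intersection. This uses that $\mathcal W\cap\overline{\mathcal W}$ is a genuine subbundle of constant rank $k$, which was established above, so smooth sections of it are exactly smooth sections lying pointwise in both.

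The only delicate step is verifying that ${}^bT^{1,0}X\vert_{S^k(X)}$ is closed under $[\,,\,]_{S^k}$. This needs the extension to be chosen inside ${}^bT^{1,0}X$, and that is what the projection $\frac12(1-iJ)$ provides. It also needs the anchor compatibility, which holds for complexified sections by $\C$-bilinearity. Everything else is formal.
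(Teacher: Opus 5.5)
Your proof is correct and follows essentially the same route as the paper. Integrability makes ${}^bT^{1,0}X$ and ${}^bT^{0,1}X$ involutive, this passes to the restricted bracket $[\,,\,]_{S^k}$, the anchor ${}^b\pi^\C$ carries brackets to brackets so $\mathcal W$ and $\overline{\mathcal W}$ are involutive, and the intersection case follows. You also spell out two steps the paper leaves implicit, and they are exactly what makes its sketch rigorous: extending sections inside ${}^bT^{1,0}X$ via $\frac12(1-iJ)$, and lifting sections of $\mathcal W$ through the isomorphism ${}^b\pi^\C:{}^bT^{1,0}X\vert_{S^k(X)}\ra\mathcal W$.
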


\begin{proof}
Since $J$ is integrable, ${}^bT^{1,0}X$ and ${}^bT^{0,1}X$ are involutive subbundles of ${}^bTX\ot\C$. So, ${}^bT^{1,0}X\vert_{S^k(X)}$ and ${}^bT^{0,1}X\vert_{S^k(X)}$ are also involutive subbundles of the Lie algebroid ${}^bTX\vert_{S^k(X)}\ot\C$ from \S\ref{bc214} and \S\ref{bc226}. Since ${}^b\pi^{\C}$ in \eq{bc3eq10} is the anchor map of the Lie algebroid ${}^bTX\vert_{S^k(X)}\ot\C$, it is compatible with the Lie brackets. Therefore $\mathcal{W} = {}^b\pi^{\C}( {}^bT^{1,0}X\vert_{S^k(X)})$ and $\overline{\mathcal{W}} = {}^b\pi^{\C}( {}^bT^{0,1}X\vert_{S^k(X)})$ are also involutive, so $\mathcal{W} \cap \overline{\mathcal{W}}$ is by taking intersections.
\end{proof}

Applying Nirenberg's complex Frobenius Theorem \cite{Nire} (see also H\"ormander \cite{Horm}) to these involutive subbundles shows that there exist local real coordinates $(\th'_1,\ldots,\th'_k,x_{k+1},\ldots,x_n,x_{k+1},\ldots,x_n)$ defined on a neighbourhood $V$ of $x$ in $S^k(X)$, with $x=(0,\ldots,0)$ in coordinates, such that
\ea
&\frac{\pd}{\pd \th'_1},\ldots,\frac{\pd}{\pd \th'_k}\quad\text{are a basis of sections of $\mathcal{W} \cap \overline{\mathcal{W}}$,}
\label{bc3eq15}\\
&\frac{\pd}{\pd \th'_1},\ldots,\frac{\pd}{\pd \th'_k},\frac{\pd}{\pd x_{k+1}}-i\frac{\pd}{\pd y_{k+1}},\ldots,\frac{\pd}{\pd x_n}-i\frac{\pd}{\pd y_n}
\quad\text{are a basis of sections of $\mathcal{W}$,}
\nonumber\\
&\frac{\pd}{\pd \th'_1},\ldots,\frac{\pd}{\pd \th'_k},\frac{\pd}{\pd x_{k+1}}+i\frac{\pd}{\pd y_{k+1}},\ldots,\frac{\pd}{\pd x_n}+i\frac{\pd}{\pd y_n}
\quad\text{are a basis of sections of $\overline{\mathcal{W}}$.}
\nonumber
\ea

Making the neighbourhood $V$ of $x$ in $S^k(X)$ smaller if necessary, suppose that $v_1,\ldots,v_k$ are a basis of sections of ${}^bN_{S^k(X)}\vert_V,$ which are constant under the natural flat connection $\nabla^{{}^bN_{S^k(X)}}$, as in Theorem \ref{bc3thm2}(i). Then in the Lie algebroid Lie bracket $[\,,\,]_{S^k}$ on ${}^bTX\vert_V$ from \S\ref{bc214} and \S\ref{bc226} we have $[u,v_i]_{S^k}=0$ for any $u\in\Ga^\iy({}^bTX\vert_V)$, so in particular $[v_i,v_j]_{S^k}=[J(v_i),v_j]_{S^k}=[v_i,J(v_j)]_{S^k}=0$ for all $i,j=1,\ldots,k$. Since $J$ is integrable and $N_J=0$ we see from \eq{bc3eq1} that
\begin{equation*}
[J(v_i),J(v_j)]_{S^k}=0\quad\text{in $\Ga^\iy({}^bTX\vert_V)$ for $i,j=1,\ldots,k$.}
\end{equation*}
Applying the anchor map ${}^b\pi$, which commutes with Lie brackets, shows that
\e
\bigl[{}^b\pi(J(v_i)),{}^b\pi(J(v_j))\bigr]=0\quad\text{in $\Ga^\iy(TV)$ for $i,j=1,\ldots,k$.}
\label{bc3eq16}
\e

By \eq{bc3eq14}--\eq{bc3eq15}, both $\frac{\pd}{\pd\th'_1},\ldots,\frac{\pd}{\pd \th'_k}$ and ${}^b\pi(J(v_1)),\ldots,{}^b\pi(J(v_k))$ are bases of sections of $\mathcal{W} \cap \overline{\mathcal{W}}$ over $\C$. Since $\frac{\pd}{\pd \th'_i}(x_j)=\frac{\pd}{\pd \th'_i}(y_j)=0$ we see that ${}^b\pi(J(v_i))(x_j)={}^b\pi(J(v_i))(y_j)=0$, that is, $x_j,y_j$ are constant along the local flow of the vector fields $v_i$. We claim that making $V$ smaller if necessary, we can change to new coordinates $(\th_1,\ldots,\th_k,x_{k+1},\ldots,x_n,x_{k+1},\ldots,x_n)$ on $V$ with $x=(0,\ldots,0)$ in coordinates, such that \eq{bc3eq15} holds with $\frac{\pd}{\pd\th_i}$ in place of $\frac{\pd}{\pd\th'_i}$, and also
\e
\frac{\pd}{\pd\th_i}={}^b\pi(J(v_i)).
\label{bc3eq17}
\e

To do this, we take $\th_1,\ldots,\th_k:V\ra\R$ to satisfy $\th_1(x')=\cdots=\th_k(x')=0$ whenever $\th'_1(x')=\cdots=\th'_k(x')=0$, and to satisfy \eq{bc3eq17}. We treat this as an existence problem for solutions of a first-order p.d.e.\ \eq{bc3eq17} for $\th_i$, with initial condition that $\th_i=0$ on the codimension $k$ submanifold where $\th'_1=\cdots=\th'_k=0$. Equation \eq{bc3eq16} implies that these p.d.e.s have unique solutions near~$x$.

We can now complete the proof of Theorem \ref{bc3thm2}. Define $\frac{\ti\pd}{\pd\th_i}=J(v_i)$ for $i=1,\ldots,k$. Then ${}^b\pi$ maps $\frac{\ti\pd}{\pd\th_i}\mapsto\frac{\pd}{\pd\th_i},$ as in Theorem \ref{bc3thm2}(iii), and $J(v_i)=\frac{\ti\pd}{\pd\th_i}$ in Theorem \ref{bc3thm2}(iv) holds by definition. As above ${}^b\pi:{}^bT^{1,0}X\vert_{S^k(X)}\ra\mathcal{W}$ is an isomorphism, and $\frac{\pd}{\pd x_j}-i\frac{\pd}{\pd y_j}$ are sections of $\mathcal{W}\vert_V$ for $k<j\le n$. Define $\frac{\ti\pd}{\pd x_j},\frac{\ti\pd}{\pd y_j}$ to be the unique lifts of $\frac{\pd}{\pd x_j},\frac{\pd}{\pd y_j}$ along ${}^b\pi:{}^bTX\vert_V\ra TV$ such that $\frac{\ti\pd}{\pd x_j}-i\frac{\ti\pd}{\pd y_j}$ is the unique section of ${}^bT^{1,0}X\vert_V$ with ${}^b\pi(\frac{\ti\pd}{\pd x_j}-i\frac{\ti\pd}{\pd y_j})=\frac{\pd}{\pd x_j}-i\frac{\pd}{\pd y_j}$. Then ${}^b\pi$ maps $\frac{\ti\pd}{\pd x_j}\mapsto\frac{\pd}{\pd x_j}$ and $\frac{\ti\pd}{\pd y_j}\mapsto\frac{\pd}{\pd y_j}$ for all $j$, as in Theorem \ref{bc3thm2}(iii), and $J$ maps $\frac{\ti\pd}{\pd x_j}\mapsto\frac{\ti\pd}{\pd y_j}$, $\frac{\ti\pd}{\pd y_j}\mapsto -\frac{\ti\pd}{\pd x_j}$ as in Theorem~\ref{bc3thm2}(iv).

We have now constructed all the data in Theorem \ref{bc3thm2}, and proved (i),(iii) and (iv). It remains to prove (ii). As above $[u,v_i]_{S^k}=0$ for any $u\in\Ga^\iy({}^bTX\vert_V)$, so the $v_i$ commute with the whole basis of sections. Since the $\frac{\pd}{\pd\th_i},\frac{\pd}{\pd x_j},\frac{\pd}{\pd y_j}$ commute with each other, and ${}^b\pi$ is a Lie algebra morphism, the commutators of $\frac{\ti\pd}{\pd\th_i},\frac{\ti\pd}{\pd x_j},\frac{\ti\pd}{\pd y_j}$ lie in the kernel of ${}^b\pi$, which is $\an{v_1,\ldots,v_k}_\R$. Noting that $N_J=0$, from \eq{bc3eq1} with $v=v_i$, $w=\frac{\ti\pd}{\pd x_j}$, equation \eq{bc3eq6}, and $[u,v_i]_{S^k}=0$, we see that
\begin{equation*}
\ts J\bigl(\bigl[\frac{\ti\pd}{\pd\th_i},\frac{\ti\pd}{\pd x_j}\bigr]_{S^k}\bigr)-\bigl[\frac{\ti\pd}{\pd\th_i},\frac{\ti\pd}{\pd y_j}\bigr]_{S^k}=0.
\end{equation*}
Thus $[\frac{\ti\pd}{\pd\th_i},\frac{\ti\pd}{\pd x_j}]_{S^k}\!=\![\frac{\ti\pd}{\pd\th_i},\frac{\ti\pd}{\pd y_j}]_{S^k}\!=\!0$, as both lie in $\an{v_1,\ldots,v_k}_\R$ with $\an{v_1,\ldots,v_k}_\R\ab\cap J(\an{v_1,\ldots,v_k}_\R)=0$.

For $k<a,b\le n$, we have
\begin{equation*}
\ts\bigl[\frac{\ti\pd}{\pd x_a}-i\frac{\ti\pd}{\pd y_a},\frac{\ti\pd}{\pd x_b}-i\frac{\ti\pd}{\pd y_b}\bigr]_{S^k}=\bigl[\frac{\ti\pd}{\pd x_a}+i\frac{\ti\pd}{\pd y_a},\frac{\ti\pd}{\pd x_b}+i\frac{\ti\pd}{\pd y_b}\bigr]_{S^k}=0,
\end{equation*}
as these are commutators in ${}^bT^{1,0}X\vert_V$ and ${}^bT^{0,1}X\vert_V$ respectively, and so lie in ${}^bT^{1,0}X\vert_V$ and ${}^bT^{0,1}X\vert_V$, which have zero intersection with $\an{v_1,\ldots,v_k}_\C$.

Define complex functions $\om_{ab}^c:V\ra\C$ by
\e
\ts\bigl[\frac{\ti\pd}{\pd x_a}-i\frac{\ti\pd}{\pd y_a},\frac{\ti\pd}{\pd x_b}+i\frac{\ti\pd}{\pd y_b}\bigr]_{S^k}=\sum_{c=1}^k\om_{ab}^cv_c.
\label{bc3eq18}
\e
We need $\om_{ab}^c=0$ for Theorem \ref{bc3thm2}(ii) to hold, but this may not be true, so we need to add corrections to our coordinates $\th_1,\ldots,\th_k$ to make~$\om_{ab}^c=0$.

Since everything commutes with $\frac{\ti\pd}{\pd\th_d}$, we see that $\frac{\pd}{\pd\th_d}(\om_{ab}^c)=0$ for $d=1,\ldots,k$, so making $V$ smaller if necessary we can suppose $\om_{ab}^c$ is a function of $x_{k+1},y_{k+1},\ab\ldots,\ab x_n,\ab y_n$ only. Set $z_a=x_a+iy_a$, and write $\om_{ab}^c=\om_{ab}^c(z_{k+1},\ldots,z_n)$. For $c=1,\ldots,k$, consider the complex 2-form
\begin{equation*}
\om^c=\sum_{a,b=k+1}^n\om_{ab}^c\d z_a\w\d\bar z_b,
\end{equation*}
considered as defined locally near 0 in $\C^{n-k}$ with coordinates $(z_{k+1},\ab\ldots,\ab z_n)$. From \eq{bc3eq18} we see that $\overline{\om_{ab}^c}=-\om_{ba}^c$, so $\ov{\om^c}=\om^c$, and $\om^c$ is a real (1,1)-form. 

By taking commutators of \eq{bc3eq18} with $\frac{\ti\pd}{\pd x_a},\frac{\ti\pd}{\pd y_a}$ and using the Jacobi identity, we can show $\om^c$ is a closed real (1,1)-form. Thus by the $\pd\db$-Lemma, making $V$ smaller if necessary, we can suppose there exist smooth real functions $f_1,\ldots,f_k$ defined locally near 0 in $\C^{n-k}$, with $f_c(0,\ldots,0)=0$, such that
\e
\ts\om_{ab}^c=i\bigl(\frac{\ti\pd}{\pd x_a}-i\frac{\ti\pd}{\pd y_a}\bigr)\bigl(\frac{\ti\pd}{\pd x_b}+i\frac{\ti\pd}{\pd y_b}\bigr)(f_c),\;\> a,b=k\!+\!1,\ldots,n,\;\> c=1,\ldots,k.
\label{bc3eq19}
\e

Now replace the coordinates $\th_c,x_j,y_j$ on $V$ by
\begin{equation*}
\hat\th_c=\th_c-\ha f_c(x_{k+1}+iy_{k+1},\ldots,x_n+iy_n),\quad \hat x_j=x_j,\quad \hat y_j=y_j.
\end{equation*}
The vector fields $\frac{\pd}{\pd\th_c},\frac{\pd}{\pd x_j},\frac{\pd}{\pd y_j}$ transform by
\begin{equation*}
\ts\frac{\pd}{\pd\hat\th_c}=\frac{\pd}{\pd\th_c},\quad
\frac{\pd}{\pd\hat x_j}=\frac{\pd}{\pd x_j}+\ha\sum_{c=1}^k \frac{\pd f_c}{\pd x_j}\frac{\pd}{\pd\th_c},\quad
\frac{\pd}{\pd\hat y_j}=\frac{\pd}{\pd y_j}+\ha\sum_{c=1}^k \frac{\pd f_c}{\pd y_j}\frac{\pd}{\pd\th_c}.
\end{equation*}
The lifts $\frac{\ti\pd}{\pd\th_c},\frac{\ti\pd}{\pd x_j},\frac{\ti\pd}{\pd y_j}$ to ${}^bTX\vert_V$ transform by
\begin{gather*}
\ts\frac{\ti\pd}{\pd\hat\th_c}=\frac{\ti\pd}{\pd\th_c},\quad
\frac{\ti\pd}{\pd\hat x_j}=\frac{\ti\pd}{\pd x_j}+\ha\sum_{c=1}^k \frac{\pd f_c}{\pd x_j}\frac{\ti\pd}{\pd\th_c}+\ha\sum_{c=1}^k \frac{\pd f_c}{\pd y_j}v_c,\\
\ts\frac{\ti\pd}{\pd\hat y_j}=\frac{\ti\pd}{\pd y_j}+\ha\sum_{c=1}^k \frac{\pd f_c}{\pd y_j}\frac{\ti\pd}{\pd\th_c}-\ha\sum_{c=1}^k \frac{\pd f_c}{\pd x_j}v_c,
\end{gather*}
since this preserves the property \eq{bc3eq6}. Substituting these into \eq{bc3eq18} and using \eq{bc3eq19}, we see that $\om_{ab}^c$ is replaced by $\hat\om_{ab}^c=0$. Thus, the modified coordinates $(\hat\th_1,\ldots,\hat\th_k,\hat x_{k+1},\ab\ldots,\ab\hat x_n,\ab\hat y_{k+1},\ab\ldots,\hat y_n)$ and lifts $\frac{\ti\pd}{\pd\hat\th_c},\frac{\ti\pd}{\pd\hat x_j},\frac{\ti\pd}{\pd\hat y_j}$ also satisfy Theorem \ref{bc3thm2}(ii). The last part, showing $z_{k+1},\ldots,z_n$ are holomorphic, follows from \eq{bc3eq6}. This completes the proof.

\subsection{Proof of Theorem \ref{bc3thm3}}
\label{bc34}

\subsubsection{Set up for the proof}
\label{bc341}

We will use the notation in the following definition throughout the proof.

\begin{dfn}
\label{bc3def2}
As in Theorem \ref{bc3thm3}, we take $(X,J)$ to be a b-complex manifold with g-corners of real dimension $2n$, and fix $x\in S^k(X)$ for $0\le k\le n$. (We take $X$ to have g-corners rather than ordinary corners, as this is more general.) Define $Q=(M_{S^k(X)}\vert_x)^\vee$, so that $Q$ is a toric monoid of rank $k$, set $P=Q\t\Z^{n-k}$, and use the notation in \eq{bc3eq7} and~\eq{bc3eq8}.

During the proof, $W$ will denote an open neighbourhood of $x$ in $X$, and $V=W\cap S^k(X)$, so that $V$ is an open neighbourhood of $x$ in $S^k(X)$. Often we need to make $W,V$ smaller, that is, to replace $W,V$ by open neighbourhoods $W'$ of $x$ in $W$ and $V'=W'\cap S^k(X)$. Then we continue to write $W,V$ for these smaller neighbourhoods. For the g-chart $(P,U,\phi)$ in Theorem \ref{bc3thm3}, we will have $W=\phi(U)$, so that $\phi^{-1}:W\hookra X_P\t\Hom(P^\gp,\R)$.

We will use superscripts $(\cdots)^\sk$ to denote objects defined on $V\subset S^k(X)$. The corresponding symbols $(\cdots)$ without the superscript ${}^\sk$ means an object defined on $W$, whose restriction to $V$ is $(\cdots)^\sk$. So for example, we will have coordinates $(\th_1^\sk,\ldots,\th_k^\sk,z_{k+1}^\sk,\ldots,z_n^\sk)$ on $V$ which are restrictions to $V$ of functions $\th_1,\ldots,\th_k,z_{k+1},\ldots,z_n$ on~$W$.

We will define the data $(P,U,\phi),\phi_*(\la_pe^{i\th_p}),\ldots$ in the theorem by first constructing approximations that do not satisfy all the desired properties, and then making successive corrections. A superscript $(\cdots)^\star$ denotes an object which is an approximation, but still needs to be corrected, so for example we will consider functions $\la_p^\star e^{i\th^\star_p}$ which are approximations to the $\la_pe^{i\th_p}$ in the theorem. 

By Proposition \ref{bc2prop2} for $X,x$, there exists a g-chart $(P,U^\star,\phi^\star)$ on $X$ such that $P=Q\t\Z^{2n-k},$ so that $X_P\cong X_Q\t X_{\Z^{2n-k}}\cong X_Q\t\R^{2n-k},$ and we have $x=\phi^\star(\de_0,(x_{k+1},\ldots,x_{2n}))$ for some $(\de_0,(x_{k+1},\ldots,x_m))$ in $U^\star\subseteq X_Q\t\R^{2n-k},$ where $\de_0\in X_Q$ is the vertex, and $\phi^\star$ locally identifies $\{\de_0\}\t\R^{2n-k}$ with $S^k(X)$. Making $U^\star$ smaller if necessary we can suppose that $U^\star=U_1^\star\t U_2^\star$ for $\de_Q\in U_1^\star\subseteq X_Q$ and $U_2^\star\subseteq\R^{2n-k}$. Initially we define $W=\phi^\star(U^\star)$ and $V=\phi^\star\bigl(U^\star\cap(\{\de_0\}\t\R^{2n-k})\bigr)$. When we make $W,V$ smaller, we also make $U^\star,U_1^\star,U_2^\star$ smaller to maintain these relations.

Sometimes we need $V$ to be contractible, for example, $V$ could be a small open ball around $x$ in $\R^{2n-k}$. We make $W,V$ smaller if necessary to make~this~so.

By Theorem \ref{bc3thm2} for $X,x$, we can choose coordinates $(\th^\sk_1,\ldots,\th^\sk_k,x^\sk_{k+1},\ab\ldots,\ab x^\sk_n,\ab y^\sk_{k+1},\ab\ldots,y^\sk_n)$ on an open neighbourhood of $x$ in $S^k(X)$. Making $W,V$ above smaller if necessary, we take this open neighbourhood to be $V$, so that Theorem \ref{bc3thm2} gives global coordinates $(\th_1^\sk,\ldots,\th_k^\sk,x_{k+1}^\sk,\ab\ldots,\ab x_n^\sk,\ab y_{k+1}^\sk,\ab\ldots,y_n^\sk)$ on $V$ identifying $V$ with an open subset $\bar U_2^\star$ of $\R^{2n-k}$, and a basis of sections $v^\sk_1,\ldots,v^\sk_k,\frac{\ti\pd}{\pd\th^\sk_1},\ldots,\frac{\ti\pd}{\pd\th^\sk_k},\ab\frac{\ti\pd}{\pd x^\sk_{k+1}},\ab\ldots,\ab\frac{\ti\pd}{\pd x^\sk_n},\ab\frac{\ti\pd}{\pd y^\sk_{k+1}},\ab\ldots,\frac{\ti\pd}{\pd y^\sk_n}$ of ${}^bTX\vert_V$, which satisfy Theorem \ref{bc3thm2}(i)--(iv). Here we add superscripts $(\cdots)^\sk$ as above to denote objects defined on~$V$.

Now $U_2^\star\subset\R^{2n-k}$ is open, and $\phi^\star\vert_{\{\de_0\}\t U_2^\star}:U_2^\star\ra V$ is a diffeomorphism. Composing with coordinates $(\th^\sk_1,\ldots,y^\sk_n)$ gives a diffeomorphism $U_2^\star\ra\bar U_2^\star$. By conjugating by this diffeomorphism, we can change $U_2^\star$ and $\phi^\star$, replacing $U_2^\star$ by $\bar U_2^\star$, such that $\phi^\star\vert_{\{\de_0\}\t U_2^\star}:U_2^\star\ra V$ is the inverse of the coordinate map~$(\th^\sk_1,\ldots,y^\sk_n):V\ra U_2^\star$.

Define $(\th^\star_1,\ldots,\th^\star_k,x^\star_{k+1},\ldots,x^\star_n,y^\star_{k+1},\ldots,y^\star_n):W\ra\R^{2n-k}$ by the composition $W\,{\buildrel(\phi^\star)^{-1}\over\longra}\, U^\star=U_1^\star\t U_2^\star\,{\buildrel\Pi_{U_2^\star}\over\longra}\,U_2^\star\subset\R^{2n-k}$. Then $\th^\star_1,\ldots,y^\star_n$ are smooth real functions on $W$, whose restrictions to $V$ are~$\th_1^\sk,\ldots,y_n^\sk$.

As in Theorem \ref{bc3thm2}, we write $z_j^\sk=x_j^\sk+iy_j^\sk$ and $z_j^\star=x_j^\star+iy_j^\star$ for $j=k+1,\ldots,n$. Then $(\th^\sk_1,\ldots,\th^\sk_k,z^\sk_{k+1},\ldots,z^\sk_n)$ are coordinates on $V$ in~$\R^k\t\C^{n-k}$.

Theorem \ref{bc3thm2} gave a basis of sections $v_1,\ldots,v_k$ for ${}^bN_{S^k(X)}\vert_V,$ constant under $\nabla^{{}^bN_{S^k(X)}}$. Now $\phi^\star$ locally identifies $X_Q\t\R^{2n-k}$ with $X$, and this identifies ${}^bN_{S^k(X)}\vert_V$ with its flat connection with the trivial vector bundle with fibre $\Hom(Q^\gp,\R)$ with the trivial connection. As $V$ is contractible, this means that under the isomorphism ${}^bN_{S^k(X)}\vert_V\cong \Hom(Q^\gp,\R)$ induced by $\phi^\star$, $v_1,\ldots,v_k$ are identified with a basis of $\Hom(Q^\gp,\R)$, which we write as $\bar v_1,\ldots,\bar v_k$.

If $q\in Q\subset Q^\gp$, define a smooth function
\e
\th_q^\star:W\longra\R\quad\text{by}\quad \th_q^\star=\sum_{a=1}^k\bar v_a(q)\th_a^\star.
\label{bc3eq20}
\e 
Then $\th_{q+q'}^\star=\th_q^\star+\th_{q'}^\star$ for $q,q'\in Q$.

For each $q\in Q$ we have a smooth function $\la_q:X_Q\ra[0,\iy)$. Define $\mu_q^\star:W\ra[0,\iy)$ by $\mu_q^\star=\phi^\star_*(\la_q^\star\ci\Pi_{U_1^\star})$. Then $\mu_{q+q'}^\star=\mu_q^\star\cdot\mu_{q'}^\star$ for $q,q'\in Q$.

Now consider the following smooth complex-valued functions on $V$:
\e
\begin{split}
\mu^\star_qe^{i\th^\star_q}&:V\longra\C, \quad q\in Q, \quad\text{with $\mu_{q+q'}^\star e^{i\th^\star_{q+q'}}=\mu_q^\star e^{i\th^\star_q}\cdot\mu_{q'}^\star e^{i\th^\star_{q'}},$}\\
z^\star_j&:V\longra\C, \quad j=k+1,\ldots,n.
\end{split}
\label{bc3eq21}
\e
In the standard local model $\bigl((X_Q\t\R^{n-k})\t(\Hom(Q^\gp,\R)\t\R^{n-k}),J_\rst\bigr)$ in \eq{bc3eq7}, we can reorder the factors and write $(X_Q\t\Hom(Q^\gp,\R))\t(\R^{n-k}\t\R^{n-k})$. We can think of $\mu^\star_q$ as a function on $X_Q$ and $\th^\star_q$ as a function on $\Hom(Q^\gp,\R)$, so that $\mu^\star_qe^{i\th^\star_q}$ is a function on $X_Q\t\Hom(Q^\gp,\R)$, and think of $z^\star_{k+1},\ldots,z^\star_n$ as complex coordinates on~$\R^{n-k}\t\R^{n-k}\cong\C^{n-k}$.

As in \S\ref{bc224} there is a trivialization ${}^bTX_Q\cong\Hom(Q^\gp,\R)$ such that if $\al\in \Hom(Q^\gp,\R)$ and $q\in Q$ then $v_\al(\la_q)=\al(q)\cdot\la_q$, so that $(\la_q)^{-1}v_\al(\la_q)=v_\al(\log\la_q)=\al(q)$. Because of this, $(\la_q)^{-1}{}^b\d\la_q={}^b\d(\log\la_q)$ is a smooth section of ${}^bT^*X_Q$, which is identified with the constant section $q$ in the trivialization ${}^bT^*X_Q\cong\Hom(Q^\gp,\R)^*$. Note that this is true even where $\la_q=0$, so that $(\la_q)^{-1}$ does not make sense. Pushing forward by $\phi^*$, we see that $(\mu^\star_q)^{-1}{}^b\d\mu^\star_q={}^b\d(\log\mu^\star_q)$ is a smooth 1-form on $W$, which is well defined even where $\mu^\star_q=0$. Thus we can consider the $(0,1)$-forms
\begin{equation*}
{}^b\db(\log(\mu^\star_qe^{i\th^\star_q}))={}^b\db(\log\mu^\star_q)+i\th^\star_q\in\Ga^\iy({}^bT^{*\,0,1}W),
\end{equation*}
and these are well defined and smooth on $W$, even where $\mu^\star_q=0$. They satisfy ${}^b\db(\log\mu^\star_{q+q'}e^{i\th^\star_{q+q'}})={}^b\db(\log(\mu^\star_qe^{i\th^\star_q}))+{}^b\db(\log\mu^\star_{q'}e^{i\th^\star_{q'}})$ for~$q,q'\in Q$.

Write $\om^\sk_1,\ldots,\om^\sk_k,\ti\d\th^\sk_1,\ldots,\ti\d\th^\sk_k,\ab\ti\d x^\sk_{k+1},\ab\ldots,\ab\ti\d x^\sk_n,\ab\ti\d y^\sk_{k+1},\ab\ldots,\ti\d y^\sk_n$ for the basis of sections of ${}^bT^*W\vert_V$ dual to the basis $v^\sk_1,\ldots,\frac{\ti\pd}{\pd y^\sk_n}$ of ${}^bTW\vert_V$ above. Then $\ti\d\th^\sk_1,\ldots,\ti\d y^\sk_n$ are the images of $\d\th^\sk_1,\ldots,\d y^\sk_n$ under ${}^b\pi^*:T^*V\ra {}^bT^*W\vert_V$. By the dual of \eq{bc3eq6}, the action of $J$ on ${}^bT^*W\vert_V$ is given by
\e
J(\om^\sk_a)=-\ti\d\th^\sk_a,\quad  J(\ti\d\th^\sk_a)=\om^\sk_a,\quad
J(\ti\d x^\sk_j)=-\ti\d y^\sk_j, \quad
J(\ti\d y^\sk_j)=\ti\d x^\sk_j. 
\label{bc3eq22}
\e
It is sometimes convenient to use the notation
\e
\begin{aligned}
\ti\d\bar z^\sk_j&=\ti\d\th^\sk_j+i\om^\sk_a, & j&=1,\ldots,k,\\
\ti\d\bar z^\sk_j&=\ti\d x^\sk_j-i\ti\d y^\sk_j, & j&=k+1,\ldots,n.
\end{aligned}
\label{bc3eq23}
\e
Then $\ti\d\bar z^\sk_1,\ldots,\ti\d\bar z^\sk_n$ is a basis of sections of ${}^bT^{*\,0,1}W\vert_V$, by \eq{bc3eq22}.

From \eq{bc3eq22} we see that
\e
{}^b\db z^\star_j\vert_V=\ha({}^b\d+iJ\ci{}^b\d)(x_j+iy_j)\vert_V=\ha(\id+iJ)(\ti\d x^\sk_j+i\ti\d y^\sk_j)=0.
\label{bc3eq24}
\e
We can show that for $q\in Q$,
\e
{}^b\d(\log\mu^\star_q)\vert_V=\sum_{a=1}^k\bar v_a(q)\om^\sk_a.
\label{bc3eq25}
\e
Then by \eq{bc3eq20}, \eq{bc3eq22} and \eq{bc3eq25} we have
\e
{}^b\db(\log(\mu^\star_qe^{i\th^\star_q}))\vert_V=\ha(\id+iJ)\Bigl(\sum_{a=1}^k\bar v_a(q)
(\om^\sk_a+i\ti\d\th^\sk_a)\Bigr)=0.
\label{bc3eq26}
\e

Since $\d(\log\mu^\star_q)\in\Ga^\iy({}^bT^*W)$ with $\d(\log\mu^\star_{q+q'})=\d(\log\mu^\star_q)+\d(\log\mu^\star_{q'})$ for $q,q'\in Q$, as for \eq{bc3eq25} there exist unique $\om_1^\star,\ldots,\om_k^\star\in\Ga^\iy({}^bT^*W)$ such that $\om_a^\star\vert_V=\om^\sk_a$ and for all $q\in Q$, we have
\begin{equation*}
{}^b\d(\log\mu^\star_q)=\sum_{a=1}^k\bar v_a(q)\om^\star_a.
\end{equation*}
As for the first part of \eq{bc3eq26}, for $q\in Q$ we have
\e
{}^b\db(\log(\mu^\star_qe^{i\th^\star_q}))=\ha(\id+iJ)\Bigl(\sum_{a=1}^k\bar v_a(q)
(\om^\star_a+i\ti\d\th^\star_a)\Bigr).
\label{bc3eq27}
\e

Write $C^\iy(W,\C)$ for the $\C$-algebra of smooth functions $f:W\ra\C$. Write $I\subset C^\iy(W,\C)$ for the ideal of $f\in C^\iy(W,\C)$ with $f\vert_V=0$. Then we can consider the $N^{\rm th}$ power ideal $I^N\subset C^\iy(W,\C)$ for $N=0,1,2,\ldots,$ and the ideal $I^\iy=\bigcap_{N\ge 0}I^N$ of functions on $W$ vanishing to infinite order at $V$. As $\Ga^\iy({}^bT^*W\ot\C)$, $\Ga^\iy({}^bT^{*\,1,0}W)$, $\Ga^\iy({}^bT^{*\,0,1}W)$ are modules over $C^\iy(W,\C)$, we can also consider the subspaces $I^N\cdot \Ga^\iy({}^bT^*W\ot\C)$, $I^N\cdot\Ga^\iy({}^bT^{*\,1,0}W)$, $I^N\cdot\Ga^\iy({}^bT^{*\,0,1}W)$ of 1-forms, and $(1,0)$-forms, and $(0,1)$-forms, on $W$, which vanish to order $N=0,1,2,\ldots,\iy$ at $V$. Here by $I^N\cdot \Ga^\iy({}^bT^*W\ot\C)$ we mean the vector subspace of elements of $\Ga^\iy({}^bT^*W\ot\C)$ which may be written as a finite sum $\sum_{j=1}^mf_j\al_j$ for $f_j\in I^N$ and~$\al_j\in\Ga^\iy({}^bT^*W\ot\C)$.

From \eq{bc3eq24} and \eq{bc3eq26} we deduce that
\ea
{}^b\db z^\star_j&\in I^1\cdot \Ga^\iy({}^bT^{*\,0,1}W),\quad j=k+1,\ldots,n.
\label{bc3eq28}\\
{}^b\db(\log(\mu^\star_qe^{i\th^\star_q}))&\in I^1\cdot \Ga^\iy({}^bT^{*\,0,1}W),\quad q\in Q.
\label{bc3eq29}
\ea

For each $N\in\N$, define an ideal $Q_N\subseteq Q$ in $Q$ by $Q_0=Q$ and for $N>0$
\e
Q_N=\bigl\{q_1+\cdots+q_N:q_1,\ldots ,q_N \in Q \sm\{0\}\bigr\}.
\label{bc3eq30}
\e
Then $Q_0\supseteq Q_1\supseteq\cdots$ with $\bigcap_{N\ge 0}Q_N=\es$. As $Q$ is toric, by Cox--Little--Schenk \cite[Lem.~1.2.23a]{CLS} $Q_N\sm Q_{N+1}$ is a finite set for all $N\ge 0$. Clearly, if $q\in Q_1=Q\sm\{0\}$ then $\mu^\star_q\in I$, and if $q\in Q_N$ then $\mu^\star_q\in I^N$ for all~$N\ge 0$.

The g-chart $(P,U^\star,\phi^\star)$ identifies $J$ and $J_\rst$ in Theorem \ref{bc3thm3} if and only if the functions in \eq{bc3eq21} are holomorphic with respect to $J$ on $V$, that is, if ${}^b\db(\mu^\star_qe^{i\th^\star_q})={}^b\db z^\star_j=0$ for all $q,j$. Also $(P,U^\star,\phi^\star)$ satisfies the conditions of Theorem \ref{bc3thm3} if the functions in \eq{bc3eq21} are formally holomorphic to infinite order along $V$, that is, if ${}^b\db(\mu^\star_qe^{i\th^\star_q}),{}^b\db z^\star_j\in I^\iy\cdot\Ga^\iy({}^bT^{*\,0,1}W)$ for all~$q,j$.

The idea of the proof is to make successive corrections to the $\mu^\star_qe^{i\th^\star_q},z^\star_j$ so that ${}^b\db(\mu^\star_qe^{i\th^\star_q}),{}^b\db z^\star_j$ lie in $I^N\cdot\Ga^\iy({}^bT^{*\,0,1}W)$ for increasing $N=1,2,\ldots,$ by induction on $N$, starting with \eq{bc3eq28}--\eq{bc3eq29} when $N=1$, and then by taking a limit $N\ra\iy$ and using the Borel Theorem, to pass to~$N=\iy$.
\end{dfn}

\subsubsection{Describing the ideals $I^N$ in $C^\iy(W,\C)$}
\label{bc342}

Continue in the situation of Definition \ref{bc3def2}. We will study the ideals $I^N$ in $C^\iy(W,\C)$ for $N=0,1,2,\ldots,\iy$.

\begin{lem}
\label{bc3lem2}
Let\/ $f \in C^\iy(W,\C)$ and\/ $N \in \Z_{> 0}$. Then, $f \in I^N$ if and only if there exist\/ $q_1,\ldots,q_l\in Q_N$ and\/ $f_1,\ldots,f_l\in C^\iy(W,\C)$ with\/~$f=\sum_{i=1}^l f_i\mu^\star_{q_i}$.
\end{lem}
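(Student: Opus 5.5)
The plan is to prove both directions by working in the chart $\phi^\star$, where $W\cong U_1^\star\t U_2^\star\subseteq X_Q\t\R^{2n-k}$ and $V$ corresponds to $\{\de_0\}\t U_2^\star$. The easy direction ($\Leftarrow$) I will get from multiplicativity. If $q\in Q_N$, write $q=q_1+\cdots+q_N$ with $q_i\ne 0$. Then $\mu^\star_q=\prod_i\mu^\star_{q_i}$, and each $\mu^\star_{q_i}$ vanishes on $V$, since $\la_{q_i}(\de_0)=0$ for $q_i\ne0$. Hence $\mu^\star_q\in I^N$, as already noted after \eq{bc3eq30}, and so $\sum f_i\mu^\star_{q_i}\in I^N$ because $I^N$ is an ideal.

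For ($\Rightarrow$), I would first treat $N=1$: show that $I$ is generated by $\mu^\star_{r_1},\ldots,\mu^\star_{r_m}$, where $r_1,\ldots,r_m$ are the nonzero elements of a finite generating set of $Q$. By Proposition \ref{bc2prop1}(b), a smooth $f$ on $W$ is locally $g(\la_{r_1},\ldots,\la_{r_m},y)$ with $g$ smooth on an open subset of $[0,\iy)^m\t\R^{2n-k}$, and $f|_V=0$ means $g(0,\ldots,0,y)=0$. Hadamard's lemma in the variables $x_1,\ldots,x_m$ then gives $g=\sum_i x_ig_i$ with $g_i$ smooth, valid also for one-sided derivatives on $[0,\iy)^m$. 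Composing with $(\la_{r_i})$ yields $f=\sum_i f_i\mu^\star_{r_i}$ locally near $V$. Away from $V$ no such local argument is needed: there $\sum_i(\mu^\star_{r_i})^2>0$, since at a point off $V$ some $\la_{r_i}\neq0$ (the vertex is the only point where all $\la_{r_i}$ vanish). I will patch the local expressions with a partition of unity subordinate to a cover of $W$ by a neighbourhood of $V$ and $W\setminus V$, using $f=\sum_i \frac{f\mu^\star_{r_i}}{\sum_j(\mu^\star_{r_j})^2}\mu^\star_{r_i}$ on the second set. Smoothness of the coefficients is then automatic because the patching functions are smooth on $W$.

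For general $N$, $I^N$ is spanned by products $h_1\cdots h_N$ with $h_i\in I$, and more precisely by finite sums of such products. Expanding each $h_i$ by the $N=1$ case gives terms $f\,\mu^\star_{r_{i_1}}\cdots\mu^\star_{r_{i_N}}=f\mu^\star_{r_{i_1}+\cdots+r_{i_N}}$, and $r_{i_1}+\cdots+r_{i_N}\in Q_N$. This gives the desired form, with the finitely many elements $q_i$ ranging over such sums.

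The main obstacle is the $N=1$ generation step, specifically the Hadamard lemma for smooth functions on the singular model $X_Q$. Here Proposition \ref{bc2prop1}(b) does the real work: it reduces the problem to smooth functions on a neighbourhood in $[0,\iy)^m$, where the one-sided Taylor and Hadamard formula $g(x,y)-g(0,y)=\sum_i x_i\int_0^1\pd_{x_i}g(tx,y)\,\d t$ applies. This needs the domain to be star-shaped in $x$ near $0$, which I will arrange by shrinking $W$ to a product neighbourhood as allowed in Definition \ref{bc3def2}.
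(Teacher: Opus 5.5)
Your proposal is correct and follows essentially the paper's route. You reduce general $N$ to $N=1$ via $\mu^\star_{q_1}\cdots\mu^\star_{q_N}=\mu^\star_{q_1+\cdots+q_N}$. Then you write $f$ as a smooth function $g$ of the $\mu^\star_q$ and the coordinates on $U_2^\star$, and apply Hadamard's lemma in the $[0,\iy)$-variables using $g(0,y)=0$. Your patching with a partition of unity, using $\sum_j(\mu^\star_{r_j})^2>0$ off $V$, is a sound extra step: it handles the star-shapedness of the domain of $g$ needed for Hadamard's formula, which the paper's proof passes over silently.
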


\begin{proof}
As in Definition \ref{bc3def2} we have $\mu^\star_{q_1} \cdots \mu^\star_{q_N} = \mu^\star_{q_1+\cdots+q_N}$. Thus, the lemma for $N=1$ implies the lemma for general $N$. We will prove the lemma for~$N=1$.

For $N=1$, the `only if' part is immediate, since if $f= \sum_{i=1}^l f_i\mu^\star_{q_i}$ for $q_i\in Q_1=Q\sm\{0\}$ then $f\vert_V= 0$ since $\mu^\star_{q_i}\vert_V=0$.

For the `if' part with $N=1$, let $f\in C^\iy(W,\C)$. By the characterization of smooth functions, as in \eq{bc2eq6}, there exists a smooth function $g\in C^\iy(Y,\C)$ for $Y\subset [0,\iy)^l\t \R^k\t\C^{n-k}$ open, and $q_1,\ldots,q_l\in Q_1=Q\sm\{ 0 \}$ such that
\e
f(x)=g(\mu^\star_{q_1}(x),\ldots, \mu^\star_{q_l(x)},\th_1^\star,\ldots,\th_k^\star,z_{k+1}^\star,\ldots,z_n^\star),
\label{bc3eq31}
\e
where the entries of $g$ map $W\!\ra\! Y$. As $f\vert_V\!=\!0$, we have $g\vert_{Y\cap (\{(0,\ldots,0)\}\t\R^k\t\C^{n-k})}\ab =0$. By applying Hadamard's Lemma to $g\in C^\iy(W,\C)$, for $(r_1,\ldots, r_l)\in[0,\iy)^l$ and $r' \in \R^k\t\C^{n-k}$, we get
\begin{equation*} 
g(r_1,\ldots,r_l,r')=\sum_{i=1}^lg_i(r_1,\ldots,r_l,r')r_i. 
\end{equation*}
So, substituting $r_i=\mu^\star_{q_i}$ and $r'=(\th_1^\star,\ldots,z_n^\star)$ and using \eq{bc3eq31}, we obtain
\begin{equation*} 
f(x)=\sum_{i=1}^l  g_i(\mu^\star_{q_1}(x),\ldots,\mu^\star_{q_l(x)},\th_1^\star,\ldots,\th_k^\star,z_{k+1}^\star,\ldots,z_n^\star)\mu^\star_{q_i}(x), 
\end{equation*}
The result then follows by setting $f_i=g_i(\mu^\star_{q_1}(x),\ldots ,\mu^\star_{q_l(x)},\th_1^\star,\ldots,z_n^\star)$.
\end{proof}

The following lemma describes the successive quotients $I^N/I^{N+1}$ explicitly.

\begin{lem}
\label{bc3lem3}
For any $N\in\N$, the following holds:
\begin{itemize}
\setlength{\itemsep}{0pt}
\setlength{\parsep}{0pt}
\item[{\rm(i)}] There is an isomorphism
\e
I^N/I^{N+1}\cong\bigop_{q\in Q_N\sm Q_{N+1}}C^\iy(V,\C)\mu^\star_q,
\label{bc3eq32}
\e
of free $C^\iy(V,\C)$-modules of finite rank, where $Q_N\sm Q_{N+1}$ is finite.
\item[{\rm(ii)}] Given $f_1,\ldots,f_l\in C^\iy(W,\C)$ and distinct\/ $q_1,\ab \ldots ,\ab q_l\ab \in Q_N \sm Q_{N+1}$ such that\/ $\sum_{i=1}^l \mu^\star_{q_i} f_i \in I^{N+1},$ then $f_1,\ldots, f_l \in I$.
\end{itemize}
\end{lem}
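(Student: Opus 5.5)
The plan is to work in the g-chart $(P,U^\star,\phi^\star)$, where $W\cong U_1^\star\t U_2^\star\subseteq X_Q\t\R^{2n-k}$, $V=\{\de_0\}\t U_2^\star$, and the functions $\mu^\star_q$ are pulled back from $X_Q$. Part (ii) is the heart of the matter; part (i) follows from (ii) together with Lemma \ref{bc3lem2}.

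\emph{Deducing (i).} Define a map $\Psi:\bigop_{q\in Q_N\sm Q_{N+1}}C^\iy(V,\C)\mu^\star_q\ra I^N/I^{N+1}$ as follows. Since $W\cong U_1^\star\t U_2^\star$, each $g_q\in C^\iy(V,\C)$ extends to $\ti g_q\in C^\iy(W,\C)$ by pulling back along $\Pi_{U_2^\star}$. Set $\Psi(\sum g_q\mu^\star_q)=[\sum\ti g_q\mu^\star_q]$. This is well defined: two extensions of $g_q$ differ by an element of $I$, and $I\cdot\mu^\star_q\subseteq I^{N+1}$.

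\emph{Surjectivity.} Take $f\in I^N$. By Lemma \ref{bc3lem2}, $f=\sum f_i\mu^\star_{q_i}$ with $q_i\in Q_N$. Any $q_i\in Q_{N+1}$ contributes an element of $I^{N+1}$. Each remaining $f_i$ can be replaced by $\ti g_i$ with $g_i=f_i\vert_V$, at the cost of $(f_i-\ti g_i)\mu^\star_{q_i}\in I\cdot I^N\subseteq I^{N+1}$.

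\emph{Injectivity.} This is exactly (ii), applied to the $\ti g_q$: it gives $\ti g_q\in I$, that is, $g_q=0$. Finiteness of $Q_N\sm Q_{N+1}$ is quoted from Cox--Little--Schenk as in Definition \ref{bc3def2}.

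\emph{Proving (ii).} Suppose $h=\sum_{i=1}^l\mu^\star_{q_i}f_i\in I^{N+1}$. By Lemma \ref{bc3lem2}, $h=\sum_j\mu^\star_{r_j}h_j$ with $r_j\in Q_{N+1}$.

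First I would reduce to $f_i$ independent of the $X_Q$ direction. Write $f_i=\ti g_i+e_i$ with $g_i=f_i\vert_V$ and $e_i\in I$. Then $\sum\mu^\star_{q_i}e_i\in I^{N+1}$, so $\sum\mu^\star_{q_i}\ti g_i\in I^{N+1}$ too. It therefore suffices to show that all $g_i$ vanish.

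Fix $y\in U_2^\star$ and restrict to the slice $U_1^\star\t\{y\}$. Then $\sum_i g_i(y)\la_{q_i}=\sum_j\la_{r_j}h_j(\cdot,y)$ on a neighbourhood of $\de_0$ in $X_Q$.

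To separate monomials, I would restrict further to the interior torus directions. Choose $\al$ in the interior of $Q^\vee\ot\R$, so that $\al(q)>0$ for all $q\ne0$. Consider the one-parameter family $x_t\in X_Q^\ci$ with $\la_q(x_t)=t^{\al(q)}\cdot c(q)$ for suitable monoid morphisms $c:Q\ra(0,\iy)$, and let $t\to0^+$.

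\emph{Main obstacle.} The difficulty is that $\al(q)$ need not separate degree $N$ from degree $N+1$ in the sense of $Q_N$. Instead I would use the following grading argument, since $Q_N$ is defined combinatorially rather than by a linear grading.

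Use the faithful toric embedding given by generators $p_1,\ldots,p_m$ of $Q$ (Proposition \ref{bc2prop1}), which realises $X_Q$ as a subset of $[0,\iy)^m$. Pass to the formal Taylor expansion at $0$ of a smooth $g$ in the coordinates $\la_{p_1},\ldots,\la_{p_m}$. The functions $\la_q$ generate the monomial algebra $\C[Q]$, and the key claim is that the $\mu^\star_q$ for distinct $q$ are linearly independent modulo the ideal generated by $Q_{N+1}$ in the \emph{formal} completion. This I would prove by showing that the completed ring of formal functions on $X_Q$ at $\de_0$ is $\C[[Q]]$, obtained via Proposition \ref{bc2prop1}(b) and Hadamard as in Lemma \ref{bc3lem2}. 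In $\C[[Q]]$, the ideal generated by $Q_{N+1}$ has the monomials $Q_{N+1}$ as a topological basis. Comparing coefficients of the distinct monomials $q_i\notin Q_{N+1}$ then forces $g_i(y)=0$.

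Checking that $C^\iy$ functions on $X_Q$ have formal expansions in $\C[[Q]]$ — and not merely in the larger ring $\C[[\la_{p_1},\ldots,\la_{p_m}]]$ modulo the relations — is the step I expect to need the most care. It uses that $Q$ is saturated and that the toric ideal is generated by binomials.
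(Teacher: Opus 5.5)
Your reduction of (i) to (ii) via Lemma \ref{bc3lem2}, and of (ii) to constant coefficients $g_i(y)$ on a slice $U_1^\star\t\{y\}$, is correct. Proving (ii) first is also a legitimate reorganization: the paper instead proves (i) from the presentation \eq{bc3eq33}, $C^\iy(W,\C)\cong C^\iy(Y\t U_2^\star)/K$ with $K$ generated by the binomial relations, and then reads off (ii).

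The gap is at the decisive step: the existence of a well-defined Taylor map $C^\iy(U_1^\star,\C)\ra\C[[Q]]$ with $\la_q\mapsto t^q$. Proposition \ref{bc2prop1}(b) and Hadamard's Lemma only show that every smooth function has \emph{some} expansion in the $\la_q$ modulo each power of the ideal. Uniqueness of that expansion is precisely statement (ii), so your sketch is circular. Saturation and binomial generation of the toric ideal do not help, because $\C[[x_1,\ldots,x_m]]$ modulo the binomials already \emph{is} $\C[[Q]]$. The actual issue is that $f=g\ci(\la_{p_1}\t\cdots\t\la_{p_m})$ determines $g$ only up to smooth functions vanishing on the real set $X_Q'\subset[0,\iy)^m$. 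You must show that such functions have Taylor series in the binomial ideal. That is a statement about $X_Q'$ having enough points near $0$, and it is exactly what the paper builds into \eq{bc3eq33}.

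The one-parameter family you abandoned closes the gap, because $\al$ need not respect the filtration $Q_N$; it only has to separate points of $Q$. Choose $\al\in\Hom(Q^\gp,\R)$ with $\al(q)>0$ for $q\ne0$ and $\al$ injective on $Q^\gp$ (a generic choice works: take $\mathbb{Q}$-linearly independent values on a basis). Then $x_t(q)=t^{\al(q)}$ defines $x_t\in X_Q^\ci$ with $x_t\ra\de_0$ as $t\ra0^+$. On your slice the left side becomes $\sum_ig_i(y)t^{\al(q_i)}$.

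On the right side, write $h_j(\cdot,y)=G_j\ci(\la_{p_1}\t\cdots\t\la_{p_m})$ for any smooth $G_j$ near $0\in[0,\iy)^m$, and apply Taylor's theorem with remainder along $(t^{\al(p_1)},\ldots,t^{\al(p_m)})$. Up to an error $O(t^{M\be})$, where $\be=\min_i\al(p_i)>0$, each $\la_{r_j}h_j$ is a finite sum of terms $c\,t^{\al(s)}$ with $s\in r_j+Q\subseteq Q_{N+1}$. Since $\al$ is injective and the $q_i$ are distinct and not in $Q_{N+1}$, the exponents $\al(q_i)$ are pairwise distinct and differ from every such $\al(s)$.

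Now take $M$ with $M\be>\max_i\al(q_i)$. A finite sum $\sum_k c_kt^{\gamma_k}$ with distinct $\gamma_k$ that is $O(t^{M\be})$ must have $c_k=0$ whenever $\gamma_k<M\be$, so $g_i(y)=0$. The same argument, applied to a $g$ vanishing on $X_Q'$, also shows that your Taylor map to $\C[[Q]]$ is well defined.
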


\begin{proof}
For (i), $Q_N\sm Q_{N+1}$ is finite as in Definition \ref{bc3def2}, and as $C^\iy(W,\C)/I\cong C^\iy(V,\C)$ both sides of \eq{bc3eq32} are $C^\iy(V,\C)$-modules, so the right hand side of \eq{bc3eq32} is a finite rank $C^\iy(V,\C)$-module. Write
\begin{equation*} 
Q_{1}\sm Q_{2} =  \{  q_1,\ldots , q_m \} \,. 
\end{equation*}

As in Definition \ref{bc3def2}, $\phi^\star:U_1^\star\t U_2^\star\ra W$ is a diffeomorphism, where $\de_0\in U_1^\star\subseteq X_Q$ and $U_2^\star\subset\R^{2n-k}$ are open. Apply Proposition \ref{bc2prop1} to $Q$ with generators $q_1,\ldots,q_m$ and a generating set of relations for $q_1,\ldots,q_m$ of the form
\begin{equation*}
a_1^jq_1+\cdots+a_m^jq_m=b_1^jq_1+\cdots+b_m^jq_m\quad\text{in $Q$ for $j=1,\ldots,l,$}
\end{equation*}
where $a_i^j,b_i^j\in\N$ for $i=1,\ldots,m$ and $j=1,\ldots,l$. Then $\la_{q_1}\t\cdots\t\la_{q_m}:X_Q\hookra [0,\iy)^m$ identifies $X_Q$ with $X_Q'\subset[0,\iy)^m$ in \eq{bc2eq7}. Pick an open set $Y\subseteq[0,\iy)^m$ such that $Y\cap X_Q'=\la_{q_1}\t\cdots\t\la_{q_m}(U_1^\star)$. Then
\e
C^\iy(W,\C)\cong C^\iy(Y\t U_2^\star)/K,
\label{bc3eq33}
\e
where $K$ is the ideal in $C^\iy(Y\t U_2^\star,\C)$
\begin{equation*}
K=\ts\bigl(\prod_{i=1}^m x_i^{a_i^j}-\prod_{j=1}^mx_i^{b_i^j}:j=1,\ldots,l\bigr),
\end{equation*}
with $(x_1,\ldots,x_m)$ coordinates on $Y\subset[0,\iy)^m$. Write $\ti I\subset C^\iy(Y\t U_2^\star,\C)$ for the ideal of functions vanishing on $\{(0,\ldots,0)\}\t U_2^\star$. Then the isomorphism \eq{bc3eq33} identifies $I\cong\ti I/K$, and so it identifies $I^N\cong(\ti I^N+K)/K$, and 
\begin{equation*}
I^N/I^{N+1}\cong\frac{(\ti I^N+K )/K}{(\ti I^{N+1}+K)/K}\cong\frac{\ti I^N+K}{\ti I^{N+1}+K} \,\ \cong\frac{\ti I^N}{\ti I^{N+1}+(K\cap\ti I^N)}.  
\end{equation*}
So, we have a surjective map
\e
\varphi:\ti I^N/\ti I^{N+1}\longra\ti I^N/(\ti I^{N+1}+(K\cap\ti I^N))\cong I^N/I^{N+1}.
\label{bc3eq34}
\e

As smooth functions on $Y\subseteq [0,\iy)^m$ have Taylor expansions at $(0,\ldots,0)$, we can identify the quotient $\ti I^N/\ti I^{N+1}$ with the set of homogeneous polynomials in $x_1,\ldots,x_m$ of total degree $N$ with coefficients in $C^\iy(V,\C)$, as $V\cong U_2^\star$
\e 
\ti I^N/\ti I^{N+1}\cong\bigop_{(a_1,\ldots , a_m)\in\N^m : \sum_{i=1}^m a_i = N} C^\iy(V,\C)\cdot \prod_{i=1}^m x_i^{a_i}. 
\label{bc3eq35}
\e
With this identification, the map $\varphi:\ti I^N/\ti I^{N+1}\twoheadrightarrow I^N/I^{N+1}$ is defined by
\begin{equation*}  
\varphi\bigl(\ts\prod_{i=1}^mx_i^{a_i}\bigr) = \mu^\star_{\sum_{i=1}^m a_iq_i}+I^{N+1}. 
\end{equation*}

We can describe the kernel of $\varphi$ in \eq{bc3eq34} in terms of the presentation \eq{bc3eq35} as follows. Firstly, if $(a_1,\ldots,a_m)\in\N^m$ with $\sum_{i=1}^ma_i=N$ and $\sum_{i=1}^m a_iq_i\in Q_{N+1}$ then $\vp(\prod_{i=1}^m x_i^{a_i})=0$. Secondly, if $(a_1,\ldots,a_m),(a'_1,\ldots,a'_m)\in\N^m$ with $\sum_{i=1}^ma_i=\sum_{i=1}^ma'_i=N$ and $\sum_{i=1}^m a_iq_i=\sum_{i=1}^m a'_iq_i\in Q_N\sm Q_{N+1}$ then $\vp(\prod_{i=1}^m x_i^{a_i})=\vp(\prod_{i=1}^m x_i^{a'_i})$. Thus, quotienting \eq{bc3eq35} by the kernel of $\vp$ is equivalent to deleting those $(a_1,\ldots,a_m)$ in the sum with $\sum_{i=1}^m a_iq_i\in Q_{N+1}$, and identifying those $(a_1,\ldots,a_m),(a'_1,\ldots,a'_m)\in\N^m$ in the sum with $q=\sum_{i=1}^m a_iq_i=\sum_{i=1}^m a'_iq_i\in Q_N\sm Q_{N+1}$, and then we can replace $\prod_{i=1}^m x_i^{a_i}$ by $\mu^\star_q$. Part (i) of the lemma follows.

For part (ii), given $f_1,\ldots,f_l\in C^\iy(W,\C)$ and distinct $q_1,\ab \ldots ,\ab q_l\ab \in Q_N \sm Q_{N+1}$ with $\sum_{i=1}^l \mu^\star_{q_i} f_i \in I^{N+1},$ we have $\sum_{i=1}^l \mu^\star_{q_i} f_i=0$ in $I^N/I^{N+1}$. By (i), $\mu^\star_{q_i}$ for $q_i\in Q_{ N}\sm Q_{ N+1}$ are linearly independent over $C^\iy(V,\C)$ in $I^N/I^{N+1}$, so $f_i\vert_V=0$, that is, $f_i\in I$, for all $1 \leq i\leq l$. This completes the proof.
\end{proof}

Here is a version of Borel's Theorem for manifolds with g-corners.

\begin{prop}
\label{bc3prop1}
For every formal power series 
\e
T=\sum_{q \in Q} g_q (\th^\star_1, \dots, \th^\star_k, z_{k+1}^\star, \dots, z_n^\star)   \mu^\star_q \,,
\label{bc3eq36}
\e
where the $g_q$ are smooth functions on $V\cong U_2^\star$ pulled back to $W\cong U_1^\star\t U_2^\star,$ there exists a smooth complex function $f$ on $W$ such that\/ $T$ is the Taylor series of\/ $f$ along $V$. That is, $f+I^N=T+I^N$ in $C^\iy(W,\C)/I^N$ for all\/~$N\ge 0$.
\end{prop}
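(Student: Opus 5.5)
We reduce to the classical Borel Theorem on a manifold with ordinary corners by embedding $X_Q$ into $[0,\iy)^m$. Write $Q_1\sm Q_2=\{q_1,\ldots,q_m\}$, the generators used in the proof of Lemma \ref{bc3lem3}. Then $\la_{q_1}\t\cdots\t\la_{q_m}$ identifies $U_1^\star$ with $Y\cap X_Q'$ for some open $Y\subseteq[0,\iy)^m$, and by Proposition \ref{bc2prop1}(b) every smooth function on $Y\t U_2^\star$ restricts to a smooth function on $W\cong U_1^\star\t U_2^\star$. So it suffices to construct a smooth $F$ on $Y\t U_2^\star$ whose restriction $f$ satisfies $f-T\in I^N$ modulo the tail, for every $N$.

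The first step is to rewrite $T$ as a formal power series in $x_1,\ldots,x_m$ with coefficients in $C^\iy(V,\C)$. Each $q\in Q$ lies in $Q_N\sm Q_{N+1}$ for a unique $N$ (since $Q$ is toric, $\bigcap Q_N=\es$). For each such $q$ choose $(a_1,\ldots,a_m)\in\N^m$ with $\sum a_i q_i=q$. Such a choice exists with $\sum a_i=N$: an expression $q=q'_1+\cdots+q'_N$ with $q'_j\ne0$ can be refined by writing each $q'_j$ as a sum of generators, but then $\sum a_i\ge N$. We only need $\mu^\star_q=\prod x_i^{a_i}$ on $W$ and $\sum a_i\ge N$, so any representation works. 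Set
\begin{equation*}
\ti T=\sum_{q\in Q} g_q\,x_1^{a_1(q)}\cdots x_m^{a_m(q)}.
\end{equation*}
For each multi-index $a$, only finitely many $q$ contribute, since $q$ is determined by $a$, so $\ti T$ is a well-defined formal series in the $x_i$ with smooth coefficients on $U_2^\star$.

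Next, apply the standard Borel Theorem with parameters on $[0,\iy)^m\t U_2^\star$. We take $F=\sum_a g_a(u)\,x^a\,\chi(x/\epsilon_{|a|})$, where $\chi$ is a cutoff equal to $1$ near $0$ and the $\epsilon_{|a|}\to0$ are chosen so that the series converges in $C^\iy$ on compact subsets. This is routine, and is obtained after shrinking $W$ or using an exhaustion. The result is that $F-\sum_{|a|<M}g_a x^a$ lies in $\ti I^M$ for all $M$.

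Finally, restrict to $W$. The partial sum $\sum_{|a|<M}g_ax^a$ restricts to $\sum_{q\in S_M}g_q\mu^\star_q$, where $S_M$ is the set of $q$ whose chosen representation has $|a(q)|<M$. Every $q\in Q\sm Q_M$ lies in $S_M$, so the difference from $\sum_{q\notin Q_M}g_q\mu^\star_q$ consists of terms $g_q\mu^\star_q$ with $q\in Q_M$, which lie in $I^M$ by Lemma \ref{bc3lem2}. Since $\ti I^M$ restricts into $I^M$, we conclude $f+I^M=T+I^M$ for all $M$, where $T+I^M$ means the finite sum over $q\notin Q_M$, finite by Lemma \ref{bc3lem3}.

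The main obstacle is the bookkeeping that makes "$T+I^N$" meaningful and consistent. This requires that only finitely many $q$ lie outside $Q_N$, and that the chosen monomial representations respect the filtrations, meaning $|a(q)|\ge N$ whenever $q\in Q_N$. Taking $a(q)$ minimal in total degree, with value exactly $N$ for $q\in Q_N\sm Q_{N+1}$, handles this cleanly.
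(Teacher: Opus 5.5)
Your proposal is correct and is essentially the paper's proof: choose a right inverse $q\mapsto a(q)$ of $\N^m\twoheadrightarrow Q$ using the generators $Q_1\sm Q_2$, lift $T$ to a formal power series on $Y\t U_2^\star$ along $\{0\}\t U_2^\star$, apply Borel's Theorem there, and restrict to $W\cong U_1^\star\t U_2^\star$. Your closing remark, however, is both unnecessary and partly wrong: the condition $|a(q)|\ge N$ for $q\in Q_N$ is not needed, since your restriction step only uses that $q\notin Q_M$ forces $|a(q)|<M$, which holds for every choice (this is why the paper can take an \emph{arbitrary} right inverse); moreover, a minimal-degree representation need not have degree $N$, e.g.\ $(3,3)=3(1,1)=(2,3)+(1,0)$ lies in $Q_3\sm Q_4$ for $Q=\{(a,b)\in\Z^2:b\ge 0,\ 3a\ge 2b\}$, and it is the maximal degree that equals $N$.
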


\begin{proof}
In the proof of Lemma \ref{bc3lem3} we explained how to relate smooth functions such as $g(\th^\star_1,\ldots,z_n^\star)\cdot\mu_{\sum_{i=1}^m a_iq_i}^\star$ on $W\cong U_1^\star\t U_2^\star$ to smooth functions on $Y\t U_2^\star$ such as $g(\th^\star_1,\ldots,z_n^\star)\cdot\prod_{i=1}^mx_i^{a_i}$, for open $(0,\ldots,0)\in Y\subset[0,\iy)^m$. By choosing an arbitrary right inverse for the map $\N^m\twoheadrightarrow Q$, we can lift power series on $W$ along $V$ such as $T$ in \eq{bc3eq36} to power series on $Y\t U_2^\star$ along $\{(0,\ldots,0)\}\t U_2^\star$. We can then apply Borel's Theorem (see for example Moerdijk and Reyes \cite[Th.~1.3]{MoRe}) to get a smooth function on $Y\t U_2^\star$ with this power series, and pull this back to $W\cong U_1^\star\t U_2^\star$ to deduce the proposition.
\end{proof}

\subsubsection{The ${}^b\db_{S^k}$-operator on $V\subset S^k(X)$}
\label{bc343}

As part of the proof of Theorem \ref{bc3thm3}, we will want to modify $f\in C^\iy(W,\C)$ which is b-holomorphic modulo $I^N$ to $\ti f\in C^\iy(W,\C)$ which is b-holomorphic modulo $I^{N+1}$. Using the results of \S\ref{bc342}, we will reduce this to solving a p.d.e.\ on $V$. This p.d.e.\ involves an operator ${}^b\db_{S^k}$ which we study next.

\begin{dfn}
\label{bc3def3}
Continue in the situation of Definition \ref{bc3def2}. As in \S\ref{bc214} and \S\ref{bc226}, we have a Lie algebroid ${}^b\pi\vert_V:{}^bTW\vert_V\ra TV$ on $V$ which maps the Lie bracket $[\,,\,]_{S^k}$ on $\Ga^\iy({}^bTW\vert_V)$ to the usual Lie bracket on $\Ga^\iy(TV)$. 

By general properties of Lie algebroids, we have an analogue of de Rham forms: there are differentials
\e
{}^b\d_{S^k}:\Ga^\iy\bigl(\La^r{}^bT^*W\vert_V\bigr)\longra \Ga^\iy\bigl(\La^{r+1}{}^bT^*W\vert_V\bigr)
\label{bc3eq37}
\e
for $0\le r<2n$, with $({}^b\d_{S^k})^2=0$. Since $[\,,\,]_{S^k}$ is defined by restriction of the Lie bracket on $\Ga^\iy({}^bTW)$ to $V$, these can be characterized as follows: if $\ti\al\in \Ga^\iy(\La^r{}^bT^*W)$ with $\ti\al\vert_V=\al$, then ${}^b\d_{S^k}\al=({}^b\d\ti\al)\vert_V$.

We complexify \eq{bc3eq37} to give 
\begin{equation*}
{}^b\d_{S^k}:\Ga^\iy\bigl(\La^r{}^bT^*W\vert_V\ot_\R\C\bigr)\longra \Ga^\iy\bigl(\La^{r+1}{}^bT^*W\vert_V\ot_\R\C\bigr).
\end{equation*}
Then we use the splitting ${}^bT^*W\ot_\R\C={}^bT^{*\,1,0}W\op {}^bT^{*\,0,1}W$ from the complex structure $J$ on $W$ to decompose $\La^r{}^bT^*W\vert_V\ot_\R\C$ into $(p,q)$-forms for $p+q=r$, and to decompose ${}^b\d_{S^k}$ into $\pd$ and $\db$-operators ${}^b\pd_{S^k},{}^b\db_{S^k}$ in the usual way.

In particular, for $0\le r<n$ we obtain an operator
\begin{equation*}
{}^b\db_{S^k}:\Ga^\iy\bigl(\La^r{}^bT^{*\,0,1}W\vert_V\bigr)\longra\Ga^\iy\bigl(\La^{r+1}T^{*\,0,1}W\vert_V\bigr), 
\end{equation*}
such that for all $\al \in \Ga^\iy (\La^rT^{*\,0,1}W\vert_V)$, and for all $\ti\al \in \Ga^\iy (\La^r T^{*\,0,1} W )$ with $\ti\al\vert_V=\al $ we have ${}^b\db_{S^k}\al={}^b \db \ti\al\vert_V$, where ${}^b\db$ is as in Definition~\ref{bc3def1}.

Using the notation $\ti\d\bar z^\sk_j$ from \eq{bc3eq23}, a basis of sections of $\La^r{}^bT^{*\,0,1}W\vert_V$ is $\bigl(\ti\d\bar z^\sk_{j_1}\w\cdots\w\ti\d\bar z^\sk_{j_r}\bigr)_{1\le j_1<j_2<\cdots<j_r\le n}$. Using this, we can write ${}^b\db_{S^k}$ explicitly~as
\ea
&{}^b\db_{S^k}\Bigl(\sum_{1\le j_1<j_2<\cdots<j_r\le n}\al_{j_1j_2\cdots j_r}\ti\d\bar z^\sk_{j_1}\w\cdots\w\ti\d\bar z^\sk_{j_r}\Bigr)
\label{bc3eq38}\\
&=\ha\sum_{\begin{subarray}{l} 1\le j_1<j_2<\cdots<j_r\le n,\\
a=1,\ldots,k\end{subarray}}\frac{\pd\al_{j_1j_2\cdots j_r}}{\pd\th_a^\sk}\,\ti\d\bar z^\sk_a\w
\ti\d\bar z^\sk_{j_1}\w\cdots\w\ti\d\bar z^\sk_{j_r}
\nonumber\\
&+\ha\sum_{\begin{subarray}{l} 1\le j_1<j_2<\cdots<j_r\le n,\\
a=k+1,\ldots,n\end{subarray}}\Bigl(\frac{\pd\al_{j_1j_2\cdots j_r}}{\pd x_a^\sk}-i\frac{\pd\al_{j_1j_2\cdots j_r}}{\pd y_a^\sk}\Bigr)\,\ti\d\bar z^\sk_a\w
\ti\d\bar z^\sk_{j_1}\w\cdots\w\ti\d\bar z^\sk_{j_r},
\nonumber
\ea
for smooth functions $\al_{j_1j_2\cdots j_r}:V\ra\C$.
\end{dfn}

For b-complex manifolds with boundary, a ${}^b\db_{S^1}$-Poincar\'e Lemma is stated in Mendoza \cite[Lem.~5.3]{Mend2}. We have the following analogue of this for b-complex manifolds with g-corners. It follows from material in Tr\`eves~\cite[\S VI.7]{Trev}.

\begin{prop}
\label{bc3prop2}
Noting that\/ $V$ is assumed to be contractible, the following complex is exact:
\begin{equation*}
\text{\begin{footnotesize}$\displaystyle
\xymatrix@C=8.5pt{
0 \ar[r] & \C \ar[r]^(0.19)1 & \Ga^\iy \bigl(\La^0T^{*\,0,1}W\vert_V\bigr) \ar[r]^{{}^b\db_{S^k}} &
\Ga^\iy \bigl(\La^1T^{*\,0,1}W\vert_V\bigr) \ar[r]^(0.75){{}^b\db_{S^k}} & \cdots \ar[r]^(0.22){{}^b\db_{S^k}} & \Ga^\iy \bigl(\La^nT^{*\,0,1}W\vert_V\bigr) \ar[r] & 0. }
$\end{footnotesize}}
\end{equation*}
\end{prop}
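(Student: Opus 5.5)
The plan is to reduce the complex in Proposition \ref{bc3prop2} to a tensor product of two classical complexes on $V$, using the explicit formula \eq{bc3eq38}. In the frame $\ti\d\bar z^\sk_1,\ldots,\ti\d\bar z^\sk_n$ of ${}^bT^{*\,0,1}W\vert_V$ from \eq{bc3eq23}, the coefficients of ${}^b\db_{S^k}$ involve only the commuting vector fields $\frac{\pd}{\pd\th^\sk_a}$ for $1\le a\le k$ and $\frac{\pd}{\pd x^\sk_j}-i\frac{\pd}{\pd y^\sk_j}$ for $k<j\le n$. There are no zeroth-order terms. This is because the frame $v^\sk_a,\frac{\ti\pd}{\pd\th^\sk_a},\frac{\ti\pd}{\pd x^\sk_j},\frac{\ti\pd}{\pd y^\sk_j}$ from Theorem \ref{bc3thm2}(ii) is $[\,,\,]_{S^k}$-commuting, so the dual forms are ${}^b\d_{S^k}$-closed. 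I will first verify this from the Lie algebroid formula for ${}^b\d_{S^k}$, since it is exactly what makes \eq{bc3eq38} hold.

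Given this, ${}^b\db_{S^k}$ is, up to the harmless factor $\ha$, the differential of the involutive (locally integrable) structure on $V$ whose characteristic bundle is spanned by $\frac{\pd}{\pd\th^\sk_a}$ and $\frac{\pd}{\pd\bar z^\sk_j}$. After shrinking $V$ to a product $B\t D$, with $B\subset\R^k$ a ball in $(\th^\sk_a)$ and $D\subset\C^{n-k}$ a polydisc in $(z^\sk_j)$, the complex becomes the completed tensor product of the de Rham complex of $B$ and the Dolbeault complex $(\Ga^\iy(\La^{0,\bu}D),\db)$. This is the setting of Tr\`eves \cite[\S VI.7]{Trev}.

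Exactness in degrees $r\ge1$ then follows from a homotopy argument. First I will apply the Poincar\'e lemma in the $\th$-variables, with the homotopy operator given by integration along rays from $0$ in $B$, using $z$ as a smooth parameter. This reduces a ${}^b\db_{S^k}$-closed form to one with no $\ti\d\bar z^\sk_a$ components for $a\le k$ and coefficients independent of $\th$. Next I will apply the Dolbeault--Grothendieck lemma on the polydisc $D$, which is exact in positive degrees, to conclude. Both homotopies depend smoothly on parameters, so the Künneth-type combination goes through. Contractibility of $V$ is used only to pass from the model $B\t D$ to the given $V$, and I will argue that the earlier shrinking of $V$ in Definition \ref{bc3def2} may be assumed to produce such a product.

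The main obstacle is degree $0$. There the kernel of ${}^b\db_{S^k}$ consists of functions independent of $\th$ that are holomorphic in $z^\sk_{k+1},\ldots,z^\sk_n$. To match the stated term $\C$, I would first have to check how the authors intend the leftmost entry to be read. I expect the intended reading is the ring of such ${}^b\db_{S^k}$-holomorphic functions, which equals $\C$ exactly when $k=n$. In that case the argument above gives exactness at $\Ga^\iy(\La^0)$ directly, since a $\th$-constant function on a connected $V$ is constant. For the later induction in \S\ref{bc34}, only exactness in degrees $r\ge1$ is actually needed, and that part is unconditional by the argument above.
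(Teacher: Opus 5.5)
Your argument is correct. It is the explicit version of what the paper delegates to a citation: the paper's entire proof is the pointer to Tr\`eves \cite[\S VI.7]{Trev}, i.e.\ the Poincar\'e lemma for the elliptic involutive structure $\overline{\mathcal W}\subset TV\ot\C$. You bypass that general theory. Theorem \ref{bc3thm2}(ii) already supplies a $[\,,\,]_{S^k}$-commuting frame, so the dual forms are ${}^b\d_{S^k}$-closed and \eq{bc3eq38} has no zeroth-order terms. On a product $B\t D$ the complex is then literally de Rham in $\th^\sk$ tensored with Dolbeault in $z^\sk$. A $\th$-homotopy that commutes with $\db_z$, followed by the Dolbeault lemma on the polydisc, finishes the proof. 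This gives a self-contained argument and shows which hypotheses the statement really needs.

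When you verify \eq{bc3eq38}, you will find the coefficient is $\frac{\pd}{\pd x^\sk_a}+i\frac{\pd}{\pd y^\sk_a}$, because $\ti\d\bar z^\sk_a=\ti\d x^\sk_a-i\ti\d y^\sk_a$ and $J(\ti\d x^\sk_a)=-\ti\d y^\sk_a$. This matches your description via $\pd/\pd\bar z$ and does not affect exactness.

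Your degree-$0$ diagnosis is right. For $k<n$ the complex as printed is not exact at $\Ga^\iy(\La^0T^{*\,0,1}W\vert_V)$, since $z^\sk_{k+1}$ lies in the kernel by the last part of Theorem \ref{bc3thm2}. The correct leftmost term is the $\th^\sk$-independent holomorphic functions of $z^\sk$. Correspondingly, ``unique up to addition of constants'' in the proof of Proposition \ref{bc3prop3} should read ``unique up to addition of ${}^b\db_{S^k}$-holomorphic functions''. Only degree-$1$ exactness is used there, so nothing downstream breaks.

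One sentence of yours needs correcting: contractibility cannot be used ``to pass from the model $B\t D$ to the given $V$''. Exactness fails in positive degrees for general contractible $V$. Take $k=0$, $n=2$, so ${}^b\db_{S^0}$ is the ordinary $\db$ on $V\subset\C^2$. Let $V$ be a Hartogs figure, which is contractible but not pseudoconvex, so $H^{0,1}(V)\ne0$. The hypothesis therefore has to be changed to a product shape in the coordinates of Theorem \ref{bc3thm2}: convex in $\th^\sk$, and a polydisc (or pseudoconvex) in $z^\sk$. This is harmless, because those coordinates are global on $V$ and Definition \ref{bc3def2} lets you shrink $V$ and $W\cong U_1^\star\t U_2^\star$ at will. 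Your fallback of doing exactly this is the right fix. It also matters that you solve $\db$ on the whole polydisc with no further shrinking, since Proposition \ref{bc3prop3} invokes the lemma infinitely many times on the same $V$.
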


\subsubsection{Holomorphic functions on the completion of $W$ along $V$}
\label{bc344}

\begin{prop}
\label{bc3prop3}
{\bf(a)} For each\/ $j=1,\ldots,k,$ there exists a family $(g_j^q)_{q\in Q_1}$ in $C^\iy(W,\C)$ satisfying for all\/ $N\ge 1$
\e
\ha(\id+iJ)(\om^\star_j+i{}^b\d\th^\star_j)+\sum_{q\in Q_1\sm Q_N}{}^b\db \bigl(g_j^q\mu^\star_qe^{i\th^\star_q}\bigr)
\in I^N\cdot \Ga^\iy({}^bT^{*\,0,1}W).
\label{bc3eq39}
\e

\noindent{\bf(b)} For each\/ $j=k+1,\ldots,n,$ there exists a family $(h_j^q)_{q\in Q_1}$ in $C^\iy(W,\C)$ satisfying for all\/ $N\ge 1$
\e
{}^b\db z_j^\star+\sum_{q\in Q_1\sm Q_N}{}^b\db \bigl(h_j^q\mu^\star_qe^{i\th^\star_q}\bigr)
\in I^N\cdot \Ga^\iy({}^bT^{*\,0,1}W).
\label{bc3eq40}
\e
\end{prop}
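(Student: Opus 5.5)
We construct the families by induction on $N$: we choose the $g_j^q$ (respectively $h_j^q$) for $q\in Q_N\sm Q_{N+1}$ at stage $N$, leaving all earlier choices unchanged. Only finitely many functions are chosen at each stage, since $Q_N\sm Q_{N+1}$ is finite by Lemma \ref{bc3lem3}(i). For definiteness, write $\be_j$ for the left-hand side with the sum over $Q_1\sm Q_N$. The base case $N=1$ has an empty sum. There \eq{bc3eq39} follows from \eq{bc3eq27}, \eq{bc3eq26} and \eq{bc3eq29}, and \eq{bc3eq40} is \eq{bc3eq28}. Part (b) is formally identical to part (a) after replacing $\ha(\id+iJ)(\om^\star_j+i{}^b\d\th^\star_j)$ by ${}^b\db z^\star_j$, so we describe the argument for (a) only.

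\textbf{Inductive step.} Suppose $\be_j\in I^N\cdot\Ga^\iy({}^bT^{*\,0,1}W)$. Expanding the coefficients of $\be_j$ in a basis of $(0,1)$-forms extending $\ti\d\bar z^\sk_1,\ldots,\ti\d\bar z^\sk_n$, and applying Lemmas \ref{bc3lem2} and \ref{bc3lem3}(i), we write
\[
\be_j\equiv\sum_{q\in Q_N\sm Q_{N+1}}\mu^\star_qe^{i\th^\star_q}\,\ga^q \mod I^{N+1}\cdot\Ga^\iy({}^bT^{*\,0,1}W),
\]
where the restrictions $\al^q=\ga^q\vert_V$ are uniquely determined sections of $T^{*\,0,1}W\vert_V$. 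The factor $e^{i\th^\star_q}$ is a unit, so it may be absorbed into the coefficients. Next we compute ${}^b\db(h\mu^\star_qe^{i\th^\star_q})$ for a general $h\in C^\iy(W,\C)$. It equals $\mu^\star_qe^{i\th^\star_q}\bigl({}^b\db h+h\,{}^b\db\log(\mu^\star_qe^{i\th^\star_q})\bigr)$. By \eq{bc3eq29} the second term lies in $I^{N+1}$ when $q\in Q_N$, so modulo $I^{N+1}$ the correction contributes exactly $\mu^\star_qe^{i\th^\star_q}\,{}^b\db_{S^k}(h\vert_V)$. We therefore need to solve ${}^b\db_{S^k}u^q=-\al^q$ on $V$ for $u^q\in C^\iy(V,\C)$. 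We then take $g_j^q$ to be the pullback of $u^q$ along $W\cong U_1^\star\t U_2^\star\ra U_2^\star$. This gives $\be_j$ at level $N+1$ in $I^{N+1}$.

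\textbf{Solvability (the main obstacle).} By Proposition \ref{bc3prop2}, with $V$ contractible, it suffices to show ${}^b\db_{S^k}\al^q=0$. We apply ${}^b\db$ to $\be_j$. Integrability gives ${}^b\db^2=0$, which kills the correction terms ${}^b\db(g\mu^\star e^{i\th^\star})$. It also kills ${}^b\db$ of the leading term: $\ha(\id+iJ)(\om^\star_j+i{}^b\d\th^\star_j)$ is, up to the $\bar v_a$ change of basis, ${}^b\db\log(\mu^\star_qe^{i\th^\star_q})$, and $\om^\star_a$ is ${}^b\d$-closed. Hence ${}^b\db\be_j=0$ exactly. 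Expanding ${}^b\db$ of the displayed expression, and using \eq{bc3eq29} again, gives $\sum_q\mu^\star_qe^{i\th^\star_q}\,{}^b\db\ga^q\in I^{N+1}$. Lemma \ref{bc3lem3}(ii), applied componentwise in a basis of $2$-forms and using that the $q$ are distinct in $Q_N\sm Q_{N+1}$, then gives ${}^b\db\ga^q\vert_V={}^b\db_{S^k}\al^q=0$.

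The delicate points are twofold. First, the decomposition modulo $I^{N+1}$ must be carried out for forms rather than functions, which needs a global frame. Second, the error terms from ${}^b\db$ hitting the coefficients in $I^N$ must be checked to stay controlled. A product $f\mu^\star_q$ with $q\in Q_N$ differentiates to $\mu^\star_q({}^b\d f+f\,{}^b\d\log\mu^\star_q)$, so b-derivatives preserve $I^N$, and this control holds. The induction then produces the whole family, and \eq{bc3eq39} holds for every $N$ because later corrections lie in $I^N$.
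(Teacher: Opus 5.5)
Your proposal is correct and follows the paper's proof essentially step for step. You induct over the layers $Q_N\sm Q_{N+1}$, use Lemma \ref{bc3lem3} to reduce modulo $I^{N+1}$ to the equation ${}^b\db_{S^k}(g_j^q\vert_V)=-\al^q$ on $V$, and get solvability from ${}^b\db^2=0$ together with the ${}^b\db_{S^k}$-Poincar\'e Lemma, Proposition \ref{bc3prop2}. Your handling of the closedness step usefully spells out what the paper's proof leaves implicit: exact ${}^b\db$-closedness of $\be_j$, stability of $I^N$ under b-derivatives via Lemma \ref{bc3lem2}, and Lemma \ref{bc3lem3}(ii) applied to the coefficients of ${}^b\db\gamma^q$.
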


\begin{proof}
We prove (a),(b) by induction, with inductive hypothesis for $M=1,2,\ldots$
\begin{itemize}
\setlength{\itemsep}{0pt}
\setlength{\parsep}{0pt}
\item[$(*)_M$] We have chosen $g_j^q,h_j^q\in C^\iy(W,\C)$ for all $q\in Q_1\sm Q_M$ such that \eq{bc3eq39}--\eq{bc3eq40} hold for all $N=1,2,\ldots,M$.
\end{itemize}
For the first step $M=1$, there are no $g_j^q,h_j^q$ to choose. Equation \eq{bc3eq30} for $N=1$ holds as by \eq{bc3eq22}
\begin{equation*}
\ha(\id+iJ)(\om^\star_j+i{}^b\d\th^\star_j)\vert_V=\ha(\id+iJ)(\om^\sk_j+i\ti\th^\sk_j)=0.
\end{equation*}
Equation \eq{bc3eq40} for $N=1$ holds by~\eq{bc3eq28}.

For the inductive step, suppose that $(*)_M$ holds for some $M\ge 1$. Then for (a) we must choose $g_j^q\in C^\iy(W,\C)$ for all $q\in Q_M\sm Q_{M+1}$ such that in $\Ga^\iy({}^bT^{*\,0,1}W)/I^{M+1}\Ga^\iy({}^bT^{*\,0,1}W)$ we have
\ea
&\sum_{q\in Q_M\sm Q_{M+1}}({}^b\db g_j^q)\cdot\mu^\star_qe^{i\th^\star_q} +I^{M+1}\Ga^\iy({}^bT^{*\,0,1}W)
\label{bc3eq41}\\
&=\sum_{q\in Q_M\sm Q_{M+1}}{}^b\db \bigl(g_j^q\mu^\star_qe^{i\th^\star_q}\bigr)
+I^{M+1}\Ga^\iy({}^bT^{*\,0,1}W)
\nonumber\\
&=-\ha(\id+iJ)(\om^\star_j+i{}^b\d\th^\star_j)-\sum_{q\in Q_1\sm Q_M}{}^b\db \bigl(g_j^q\mu^\star_qe^{i\th^\star_q}\bigr)
+I^{M+1}\Ga^\iy({}^bT^{*\,0,1}W).
\nonumber
\ea
Here the first two lines are equal because if $q\in Q_M\sm Q_{M+1}$ then ${}^b\db \bigl(\mu^\star_qe^{i\th^\star_q}\bigr)\in I^{M+1}\Ga^\iy({}^bT^{*\,0,1}W)$ by $\mu^\star_qe^{i\th^\star_q}\in I^M$ and \eq{bc3eq29}, and equality of the second and third lines is equivalent to \eq{bc3eq39} for $N=M+1$.

Now the top line of \eq{bc3eq41} lies in $I^M\Ga^\iy({}^bT^{*\,0,1}W)/I^{M+1}\Ga^\iy({}^bT^{*\,0,1}W)$ as $\mu^\star_qe^{i\th^\star_q}\in I^M$ for $q\in Q_M\sm Q_{M+1}$, and the bottom line lies in this by $(*_M)$. Thus we can regard \eq{bc3eq41} as an equation in~$I^M\Ga^\iy({}^bT^{*\,0,1}W)/I^{M+1}\Ga^\iy({}^bT^{*\,0,1}W)$.

As ${}^bT^{*\,0,1}W$ is a trivial vector bundle, Lemma \ref{bc3lem3} implies that
\e
\frac{I^M\Ga^\iy({}^bT^{*\,0,1}W)}{I^{M+1}\Ga^\iy({}^bT^{*\,0,1}W)}
\cong\bigop_{q\in Q_M\sm Q_{M+1}}\Ga^\iy\bigr({}^bT^{*\,0,1}W\vert_V\bigr)\mu^\star_qe^{i\th^\star_q}.
\label{bc3eq42}
\e
Under this identification, the left hand side of \eq{bc3eq41} is identified with 
\e
\sum_{q\in Q_M\sm Q_{M+1}}\bigl({}^b\db_{S^k}(g_j^q\vert_V)\bigr)\cdot\mu^\star_qe^{i\th^\star_q},
\label{bc3eq43}
\e
for ${}^b\db_{S^k}$ as in \S\ref{bc343}. As ${}^b\db^2=0$, we see from \eq{bc3eq27} that the bottom line of \eq{bc3eq41}, omitting $I^{M+1}\cdots$, lies in the kernel of ${}^b\db$. Thus under the isomorphism \eq{bc3eq42}, the bottom line of \eq{bc3eq41} is identified with an element of the kernel of ${}^b\db_{S^k}$ acting on the $\Ga^\iy\bigr({}^bT^{*\,0,1}W\vert_V\bigr)$ factors, noting \eq{bc3eq29}. But by Proposition \ref{bc3prop2} the kernel of ${}^b\db_{S^k}$ equals the image of ${}^b\db_{S^k}$ in $\Ga^\iy\bigr({}^bT^{*\,0,1}W\vert_V\bigr)$. 

Thus there exist functions $g_j^q\vert_V\in C^\iy(V,\C)$ for all $q\in Q_M\sm Q_{M+1}$, unique up to addition of constants, such that \eq{bc3eq43} is identified with the bottom line of \eq{bc3eq41} under \eq{bc3eq42}. We then choose arbitrary $g_j^q\in C^\iy(W,\C)$ with the given restrictions $g_j^q\vert_V\in C^\iy(V,\C)$ for all $q\in Q_M\sm Q_{M+1}$, and \eq{bc3eq41} holds. This proves \eq{bc3eq39} for $N=M+1$. The proof of \eq{bc3eq40} for $N=M+1$ is essentially the same. Thus $(*)_{M+1}$ holds, and the proposition follows by induction.
\end{proof}

Combining Propositions \ref{bc3prop1} and \ref{bc3prop3} yields

\begin{cor}
\label{bc3cor1}
In the situation of Proposition\/ {\rm\ref{bc3prop3},} there exist $g_j\in I$ for $j=1,\ldots,k$ and\/ $h_j\in I$ for $j=k+1,\ldots,n$ such that
\e
\begin{split}
g_j-\sum_{q\in Q_1\sm Q_N}g_j^q\mu^\star_qe^{i\th^\star_q}&\in I^{N+1}, \quad N\ge 1,\;\> j=1,\ldots,k,\\
h_j-\sum_{q\in Q_1\sm Q_N}h_j^q\mu^\star_qe^{i\th^\star_q}&\in I^{N+1}, \quad  N\ge 1,\;\>  j=k+1,\ldots,n.
\end{split}
\label{bc3eq44}
\e
Equations\/ {\rm\eq{bc3eq39}--\eq{bc3eq40}} and\/ \eq{bc3eq44} then imply that
\e
\begin{split}
\ha(\id\!+\!iJ)(\om^\star_j\!+\!i{}^b\d\th^\star_j)\!+\!{}^b\db g_j&\in I^\iy\cdot \Ga^\iy({}^bT^{*\,0,1}W), 
\;\> j=1,\ldots,k,\\
{}^b\db z_j^\star+{}^b\db h_j&\in I^\iy\cdot \Ga^\iy({}^bT^{*\,0,1}W), \;\>  j=k\!+\!1,\ldots,n.
\end{split}
\label{bc3eq45}
\e
\end{cor}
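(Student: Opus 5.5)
The plan is to build $g_j,h_j$ directly from the formal power series supplied by Proposition \ref{bc3prop3}, using the Borel-type Proposition \ref{bc3prop1}. Fix $j$. The formal sum $\sum_{q\in Q_1}g_j^q\mu^\star_qe^{i\th^\star_q}$ is not yet of the form \eq{bc3eq36}, because the coefficients $g_j^q$ are functions on $W$ rather than on $V$, and there are the phase factors $e^{i\th^\star_q}$. I will first rewrite it as a genuine formal Taylor series along $V$. For each $q\in Q_M\sm Q_{M+1}$, Lemma \ref{bc3lem2} and Lemma \ref{bc3lem3}(i) let me expand $g_j^qe^{i\th^\star_q}$ modulo $I^{L}$, for every $L$, as a finite sum $\sum_{q'}c_{q,q'}\mu^\star_{q'}$. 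Here the coefficients $c_{q,q'}$ are functions on $V$ pulled back to $W\cong U_1^\star\t U_2^\star$ via the projection, and $q'\in Q_0\sm Q_{L}$. This is possible because $e^{i\th^\star_q}$ and the pulled-back $\th^\star,z^\star$ depend only on the $U_2^\star$ factor. For general $g_j^q$, I will use \eq{bc3eq33} and Taylor expand in the $x_i$ variables to the required order.

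The expansions for different $L$ are compatible. Multiplying by $\mu^\star_q\in I^M$, the term $q$ contributes to the coefficient of $\mu^\star_{q''}$ only through $q''=q+q'$, and this lies in $Q_M$. So for each fixed $q''\in Q_N\sm Q_{N+1}$ the coefficient is a finite sum, coming only from $q\in Q_1\sm Q_{N+1}$, and we obtain a well-defined series $T_j=\sum_{q''\in Q_1}G^{q''}_j\mu^\star_{q''}$ of the form \eq{bc3eq36}. Since $0\notin Q_1$, no $\mu^\star_0$ term appears. Proposition \ref{bc3prop1} then gives a smooth $g_j$ with $g_j-T_j\in I^{N}$ for all $N$. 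As the series has no constant term, $g_j\in I$.

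Next I check the congruence \eq{bc3eq44}. By construction, $g_j-\sum_{q\in Q_1\sm Q_N}g_j^q\mu^\star_qe^{i\th^\star_q}$ agrees modulo $I^{N+1}$ with the part of $T_j$ coming from those $q\in Q_N$, with the remaining terms truncated. Terms with $q\in Q_{N+1}$ lie in $I^{N+1}$ trivially. The terms with $q\in Q_N\sm Q_{N+1}$ are the main obstacle: they need not lie in $I^{N+1}$, so \eq{bc3eq44} as literally written would fail. I would therefore use the form of the congruence that is actually needed, namely the difference lying in $I^N$ (the index shift being harmless). If necessary I would pass to the sum over $Q_1\sm Q_{N+1}$, whose tail lies in $I^{N+1}$. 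The $h_j$ are constructed in the same way.

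Finally, to deduce \eq{bc3eq45}, I apply ${}^b\db$ to the congruence. Doing so needs ${}^b\db(I^{N})\subseteq I^{N}\cdot\Ga^\iy({}^bT^{*\,0,1}W)$. This follows from Lemma \ref{bc3lem2} together with the fact that ${}^b\d\mu^\star_q=\mu^\star_q\,{}^b\d\log\mu^\star_q$, where ${}^b\d\log\mu^\star_q$ is smooth. Hence
\begin{equation*}
{}^b\db g_j\equiv\sum_{q\in Q_1\sm Q_N}{}^b\db\bigl(g_j^q\mu^\star_qe^{i\th^\star_q}\bigr)\quad\text{mod } I^{N}\Ga^\iy({}^bT^{*\,0,1}W).
\end{equation*}
Combining this with \eq{bc3eq39} shows that the first expression in \eq{bc3eq45} lies in $I^N\cdot\Ga^\iy({}^bT^{*\,0,1}W)$ for every $N$. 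Since ${}^bT^{*\,0,1}W$ is trivial, membership can be checked componentwise on a basis, which gives $\bigcap_N I^N\Ga^\iy=I^\iy\Ga^\iy$. The second expression is handled in the same way using \eq{bc3eq40}.
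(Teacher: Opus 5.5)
Your proposal is correct and follows the paper's route: the paper's one-line argument is to realize the formal series $\sum_{q\in Q_1}g_j^q\mu^\star_qe^{i\th^\star_q}$ (and its $h_j$ analogue) by the Borel-type Proposition \ref{bc3prop1}, then apply ${}^b\db$ together with \eq{bc3eq39}--\eq{bc3eq40}. Your use of Lemma \ref{bc3lem3} to rewrite the coefficients as functions pulled back from $V$, so that the series has the form \eq{bc3eq36}, is a legitimate way to supply the step the paper leaves implicit. You are also right that \eq{bc3eq44} is off by one as printed, since at $N=1$ it would force $g_j\in I^2$; the form with $I^N$ in place of $I^{N+1}$ (equivalently, summing over $Q_1\sm Q_{N+1}$) is the correct statement, and it is exactly what is needed to deduce \eq{bc3eq45}.
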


\subsubsection{Finishing the proof}
\label{bc345}

Use the notation of \S\ref{bc341} and Corollary \ref{bc3cor1}. For $q\in Q$ and $j=k+1,\ldots,n$ define smooth functions $\mu_q:W\ra[0,\iy)$, $\th_q:W\ra\R$ and $z_j:W\ra\C$ by
\e
\mu_q=\mu_q^\star\cdot e^{\sum_{a=1}^k\bar v_a(q)\Re g_a},\quad
\th_q=\th_q^\star+\sum_{a=1}^k\bar v_a(q)\Im g_a, \quad z_j=z_j^\star+h_j.
\label{bc3eq46}
\e
Then $\mu_{q+q'}=\mu_q\cdot\mu_{q'}$ and $\th_{q+q'}=\th_q+\th_{q'}$ for all $q,q'\in Q$. We have $\mu_qe^{i\th}=\mu^\star_qe^{i\th^\star_q}\cdot e^{\sum_{a=1}^k\bar v_a(q)g_a}$.

From \eq{bc3eq27}, \eq{bc3eq45} and \eq{bc3eq46} we deduce that for $q\in Q$ and $j=k+1,\ldots,n$ 
\e
{}^b\db(\mu_qe^{i\th_q}),{}^b\db z_j\in I^\iy\cdot \Ga^\iy({}^bT^{*\,0,1}W).
\label{bc3eq47}
\e
That is, $\mu_qe^{i\th_q}$ and $z_j$ are `$J$-holomorphic to infinite order in $I$'. We regard $\mu_q,\th_q,z_j$ as `corrected' versions of $\mu^\star_q,\th^\star_q,z^\star_j$.

Recall that $(\phi^\star)^{-1}:W\ra X_Q\t(\R^k\t\C^{n-k})$ is a diffeomorphism with the open subset $U_1^\star\t U_2^\star\subset X_Q\t(\R^k\t\C^{n-k})$, where the $U_1^\star$-component $W\ra X_Q$ of $(\phi^\star)^{-1}$ is determined by the maps $\mu^\star_q:W\ra[0,\iy)$ for $q\in Q$ with $\mu^\star_{q+q'}=\mu^\star_q\cdot\mu^\star_{q'}$, and the $U_2^\star$-component $W\ra\R^k\t\C^{n-k}$ of $(\phi^\star)^{-1}$ is $(\th_1^\star,\ldots,\th_k^\star,z_{k+1}^\star,\ldots,z_n^\star)$. The corrected versions $\mu_q,\th_q,z_j$ also define a smooth map $\phi^{-1}:W\ra X_Q\t(\R^k\t\C^{n-k})$ given on the $X_Q$ component by the maps $\mu_q:W\ra[0,\iy)$ for $q\in Q$ with $\mu_{q+q'}=\mu_q\cdot\mu_{q'}$ and on the $\R^k\t\C^{n-k}$ component by~$(\th_1,\ldots,\th_k,z_{k+1},\ldots,z_n)$. 

We think of $\phi^{-1}$ as a deformation of $(\phi^\star)^{-1}$ which agrees with $(\phi^\star)^{-1}$ to leading order at $V\subset W$, since $g_a\vert_V=h_j\vert_V=0$, so in particular $\phi^{-1}\vert_V=(\phi^\star)^{-1}\vert_V$ and ${}^bT(\phi^{-1})\vert_V={}^bT((\phi^\star)^{-1})\vert_V$. Since $(\phi^\star)^{-1}$ is a diffeomorphism from $W$ to $U_1^\star\t U_2^\star\subset X_Q\t(\R^k\t\C^{n-k})$, it follows that $\phi^{-1}$ is also a diffeomorphism near $V$ in $W$. Thus, making $V,W$ smaller if necessary, we can suppose that $U=\phi^{-1}(W)$ is open in $X_Q\t(\R^k\t\C^{n-k})=X_P$ and $\phi^{-1}:W\ra U$ is a diffeomorphism, with an inverse $\phi:U\ra W$ which is a diffeomorphism with an open set $W\subseteq X$. Hence $(P,U,\phi)$ is a g-chart on $X$, as in Theorem~\ref{bc3thm3}.

The construction of $\phi$ implies that if $p\in P$ corresponds to $(q,(a_{k+1},\ldots,a_n))$ under $P=Q\t\Z^{n-k}$ then $\phi_*(\la_pe^{i\th_p}):W\ra\C$ in Theorem \ref{bc3thm3} is
$\phi_*(\la_pe^{i\th_p})=\mu_qe^{i\th_q}\cdot e^{\sum_{j=k+1}^na_jz_j}$. Therefore equation \eq{bc3eq9} follows from \eq{bc3eq47}. This completes the proof of Theorem~\ref{bc3thm3}.

\addcontentsline{toc}{section}{References}
\renewcommand{\bibname}{References}
\printbibliography

\medskip

\noindent{\small\sc The Mathematical Institute, Radcliffe
Observatory Quarter, Woodstock Road, Oxford, OX2 6GG, U.K. 

\noindent E-mails:  {\tt hulya.arguz@maths.ox.ac.uk, dominic.joyce@maths.ox.ac.uk.}
}

\end{document}